\documentclass[12pt,leqno]{amsart}

\usepackage{amsmath,amssymb,amsthm}
\usepackage{amsfonts}
\usepackage{verbatim}
\usepackage{graphicx}
\usepackage[utf8]{inputenc}
\usepackage[english]{babel}
\usepackage{hyperref}

\newtheorem{theorem}{Theorem}
\newtheorem*{theorem*}{Theorem}
\newtheorem{lemma}{Lemma}
\newtheorem{corollary}{Corollary}
\newtheorem{proposition}{Proposition}

\newtheorem*{example*}{Example}



\theoremstyle{definition}
\newtheorem*{definition*}{Definition}
\newtheorem{definition}{Definition}

\newtheorem*{conjecture}{Conjecture}
\theoremstyle{remark}
\newtheorem{remark}{Remark}


\newcommand{\bbR}{\mathbb{R}}

\newcommand{\bbN}{\mathbb{N}}
\newcommand{\bbS}{\mathbb{S}}
\newcommand{\mF}{\mathcal{F}}

\newcommand{\mT}{\mathcal{T}}

\newcommand{\mcP}{\mathcal{P}}

\newcommand{\Rn}{\bbR^d}
\newcommand{\Sn}{\bbS^{d-1}}

\newcommand{\ddv}[1]{\frac{\partial}{\partial #1}}

\newcommand{\const}{\mathop{{\rm const}}\nolimits}
\newcommand{\Lip}{\mathop{{\rm Lip}}\nolimits}
\newcommand{\1}{1\hspace{-.55ex}\mbox{l}}
\newcommand{\Leb}{\mathrm{Leb}}

\textwidth 16.8 cm \textheight 23 cm \oddsidemargin -.35 in \evensidemargin -.35 in \topskip 0 cm \footskip 0.8 cm \headheight 0 cm

\author{Victor Kleptsyn, Aline Kurtzmann}
\title{Ergodicity of self-attracting motion}


\begin{document}

\maketitle

\thanks{The second author has been partially supported by the Swiss National Science Foundation grant PBNE2-119027.}

\begin{abstract}
We study the asymptotic behaviour of a class of self-attracting motions on $\bbR^d$. We prove the decrease of the free energy related to the system and mix it together with stochastic approximation methods. We finally obtain the (limit-quotient) ergodicity of the self-attracting diffusion with a speed of convergence. 
\end{abstract}

\tableofcontents

\section{Introduction}\label{s:introduction}

\subsection{Statement of the problem}

This text is devoted to study the asymptotic behaviour of a
Brownian motion, interacting with its own passed trajectory,
so-called ``self-interacting motion''. Namely, we fix an
interaction potential function $W:\Rn\to \bbR$, and consider the
stochastic differential equation
\begin{equation}\label{eq:AABM1}
\mathrm{d}X_t= \sqrt{2}\, \mathrm{d}B_t - \left(\frac{1}{t} \int_0^t \nabla W(X_t-X_s)\, \mathrm{d}s \right) \mathrm{d}t,
\end{equation}
where $(B_t,t\ge 0)$ is a standard Brownian motion, with an initial condition
of given~$X_0$ (with the condition of continuity at $t=0$). This
equation can be rewritten using the normalized occupation measure
$\mu_t$:
$$
\mu_t= \frac{1}{t} \int_0^t \delta_{X_s} \, \mathrm{d}s,
$$
where $\delta_x$ is the Dirac measure concentrated at the
point~$x$. Using this convention, the equation~\eqref{eq:AABM1}
becomes
\begin{equation}\label{eq:AABM2}
\mathrm{d}X_t= \sqrt{2}\, \mathrm{d}B_t - \nabla W*\mu_t (X_t) \, \mathrm{d}t,
\end{equation}
where $*\,$ stands for the convolution.

Note that the equations~\eqref{eq:AABM1}, \eqref{eq:AABM2}
clearly have singularities at $t=0$, which is the reason why
sometimes they are considered only after some positive
time $r>0$. We discuss the existence and uniqueness questions
for the solution in the appendix. \medskip

Similar problems have already been studied since the 90's, for instance by Durrett and Rogers \cite{DR}, or Bena\"im, Ledoux and Raimond \cite{BLR}, initially to modelize the evolution of polymers or ants. The first
time-continuous self-interacting processes have been introduced by
Durrett and Rogers \cite{DR} under the name of ``Brownian polymers".
They are solutions to SDEs of the form
\begin{equation}\label{eq:DR}
\mathrm{d}X_t = \mathrm{d}B_t + \left(\int_0^t f(X_t-X_s)\,
\mathrm{d}s\right) \mathrm{d}t
\end{equation}
where $(B_t, t\geq 0)$ is a
standard Brownian motion and $f$ a given function. We remark that, in the latter equation, the drift term is given by the non-normalized measure $t\mu_t$ and not by $\mu_t$ as the process we will study here. As the process $(X_t, t\geq 0)$ evolves in an environment changing with its past trajectory, this SDE defines a
self-interacting diffusion, which can be either self-repelling or
self-attracting, depending on the function $f$. In any dimension, Durrett \& Rogers obtained that the upper limit of $|X_t|/t$ does not exceed a deterministic constant whenever $f$ has a compact
support. Nevertheless, very few results are known as soon as the interaction is not self-attracting.

Self-interacting diffusions, with dependence on the (convoled) empirical measure $(\mu_t, t\geq 0)$, have been considered since the work of Bena\"im, Ledoux \& Raimond \cite{BLR}. A great difference between these diffusions and Brownian polymers is that the drift term is divided by $t$. This implies that the long-time away interaction is less important than the near-time interaction (the interaction is not ``uniform in time'' anymore). Bena\"im et al.  have shown in \cite{BLR,BR} that the asymptotic behaviour of $\mu_t$ can be related to
the analysis of some deterministic dynamical flow defined on the
space of the Borel probability measures. Afterwards, one can go
further in this study and give sufficient conditions for the a.s.
convergence of the empirical measure. It happens that, with a symmetric interaction, $\mu_t$ converges a.s. to a local minimum of a nonlinear free energy functional (each local minimum having a positive probability to be chosen), this free energy being a Lyapunov function for the deterministic flow. 
These results are valid for a compact manifold. Part of them have recently been generalized to $\mathbb{R}^d$ (see \cite{AK}) assuming a confinement potential satisfying some conditions --- these hypotheses on the confinement potential are required since in general the process can be transient, and is thus very difficult to analyze. In these works, no rate of convergence is obtained. Most of these results are summarized in a recent survey of
Pemantle \cite{Pem}, which also includes self-interacting random walks.

Coming back to the process introduced by Durrett \& Rogers, all the results obtained have in common that the drift may overcome the noise, so that the randomness of the process is ``controlled". To illustrate that, let us mention, for the same model of Durrett \& Rogers, the case of a repulsive and compactly supported function $f$, that was conjectured in \cite{DR} and has been partially solved very recently by Tarr\`es, T\'oth and Valk\'o \cite{TTV}:
\begin{conjecture}[Durrett \& Rogers \cite{DR}]
Suppose that $f: \mathbb{R}\rightarrow \mathbb{R}$ is an odd function of compact support, such that $xf(x)\ge 0$. Then, for the process $X$ defined by~\eqref{eq:DR}, the quotient $X_t /t$ converges a.s. to 0.
\end{conjecture}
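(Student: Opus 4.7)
\begin{ideaproof}
The conjecture lives in dimension one, so the natural tool is to exploit the local-time structure. Let $L_t(y)$ denote the occupation-time density of $X$ at level $y$, and rewrite the drift in~\eqref{eq:DR} as
\begin{equation*}
\int_0^t f(X_t - X_s)\, \mathrm{d}s = \int_{\mathbb{R}} f(X_t - y)\, L_t(y)\, \mathrm{d}y.
\end{equation*}
Since $f$ is odd and compactly supported, this is a functional of the \emph{environment seen from the particle}, $\eta_t(y) := L_t(X_t + y)$, and vanishes identically whenever $\eta_t$ is symmetric in $y$. The plan is therefore to prove that $\eta_t$ becomes approximately symmetric in a sense strong enough to kill the drift on a ballistic scale.

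First, I would try to set up a Lyapunov-type functional on the space of profiles, of the form $V(\eta)=\int F(y)\eta(y)\,\mathrm{d}y$ with $F'=f$ (so $F\ge 0$ and even, thanks to $xf(x)\ge 0$). An It\^o computation for $V(\eta_t)$ should exhibit a non-positive dissipation coming from the self-repulsion, plus a martingale part from the Brownian increment and a boundary contribution from $X_t$ entering fresh regions. The goal is that the only profiles stationary under this dissipation are the symmetric ones, which would give quantitative control of the asymmetry of $\eta_t$ and hence of the drift.

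Second, I would set up a Ray--Knight-type decomposition at hitting times $\tau_n=\inf\{t:|X_t-X_0|\ge n\}$. For an honest diffusion the local-time profile indexed by space is Markovian; in the self-interacting case one gets a nonlinear deformation of the recursions which one hopes to show is contractive. This is in fact the route followed by Tarr\`es, T\'oth and Valk\'o in~\cite{TTV} to obtain their partial result on the same conjecture. Recurrence of the resulting auxiliary chain on profile space would yield that the mean displacement per excursion is sub-ballistic, and combined with the law of large numbers for the Brownian part would give $X_t/t\to 0$ almost surely.

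The principal obstacle is the absence of any Markov property for $X$ itself: the entire past trajectory enters the drift, and, in contrast with the model~\eqref{eq:AABM1} treated in the present paper, the drift in~\eqref{eq:DR} is \emph{not} renormalised by $1/t$, so the stochastic-approximation machinery developed later here does not apply. Any serious proof must therefore produce a cancellation mechanism that is quantitative and uniform in $t$, most plausibly by coupling the particle with its spatially reflected trajectory and exploiting the oddness of $f$. Closing such a coupling robustly, in the absence of a confining potential, is the step I expect to be the real bottleneck --- and is presumably why, at the time of writing, only partial results are available.
\end{ideaproof}
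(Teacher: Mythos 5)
The statement you were asked to prove is labelled a \emph{Conjecture} in the paper, and that label is accurate: the paper does not prove it, cites it as open, and notes only that Tarr\`es, T\'oth and Valk\'o \cite{TTV} have obtained a partial result. It appears in \S\ref{s:introduction} purely as motivation, to contrast the unrenormalised Durrett--Rogers drift in~\eqref{eq:DR} with the $1/t$-renormalised drift in~\eqref{eq:AABM1} that the paper actually treats. So there is no ``paper's own proof'' to compare against, and no complete proof is expected of you here.

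That said, your write-up is honest and well-informed rather than wrong: you correctly identify that the model is non-Markov with an unrenormalised drift, so the stochastic-approximation and free-energy machinery of this paper does not transfer; you correctly point to local-time/Ray--Knight structure as the natural one-dimensional tool and to~\cite{TTV} as the state of the art along that route; and you explicitly flag the coupling/cancellation step as the unresolved bottleneck. The one place where your sketch overreaches is the claimed Lyapunov functional $V(\eta)=\int F(y)\eta(y)\,\mathrm{d}y$: you assert an It\^o computation ``should exhibit a non-positive dissipation,'' but $V(\eta_t)$ is not a semimartingale in the usual sense (the environment process $\eta_t$ is infinite-dimensional and only piecewise regular), and for a \emph{repulsive} $f$ the sign of the drift contribution to $V$ is not evidently favourable --- indeed, showing any such monotonicity is essentially as hard as the conjecture itself. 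You should present that step as a hope rather than a computation. Apart from that, your assessment that the argument does not close is correct, and consistent with the conjecture remaining open.
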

In~\eqref{eq:AABM1}, the drift term is divided by $t$, and so it is bounded for a compactly supported interaction $W$. As for the process of the conjecture, the interaction potential is in general not strong enough for the process~\eqref{eq:AABM1} to be recurrent, and the behaviour is then very difficult to analyze. In particular, it is hard to predict the relative importance of the drift term (in competition with the Brownian motion) in the evolution. 

On the other hand, in our case of uniformly convex $W$, the interaction potential is attractive enough for the diffusion (a bit modified) to be comparable to an Ornstein-Uhlenbeck process, what gives an access to its ergodic behaviour.\medskip

Another problem, related to the one considered in this paper, is the diffusion corresponding to MacKean and Vlasov's PDE. Namely, consider the Markov process defined by the SDE
\begin{equation}\label{eq:MKV}
\mathrm{d}Y_t= \sqrt{2}\, \mathrm{d}B_t - \nabla W*\nu_t(Y_t)\, \mathrm{d}t,
\end{equation}
where $\nu_t$ stands for the \emph{law} of $Y_t$, and $W$ is a smooth strictly uniformly convex function. 

The questions of the asymptotic law for $Y$ have been intensively studied these last years, by Carrillo, MacCann \& Villani~\cite{CMV}, Bolley, Guillin \& Villani~\cite{BGV}, or Cattiaux, Guillin \& Malrieu~\cite{CGM} for instance. It turns out that, under some assumptions, the laws $\nu_t$ converge to the limit measure $\nu^*$. This measure is characterized as a fixed point of a map $\Pi: \nu \mapsto \Pi(\nu)$ associating to a measure $\nu$ the probability measure 
$$\Pi(\nu)(\mathrm{d}x) := \frac{1}{Z} e^{-W*\nu(x)} \mathrm{d}x,$$ which is the stationary measure of the process, with $\nu_t$ in the right-hand side of~\eqref{eq:MKV} replaced by $\nu$ and $Z=Z_\nu$ is the normalization constant.

 In particular, Carrillo, MacCann \& Villani~\cite{CMV} have shown, using some mass transport tools, that
the relative free energy corresponding to~$\nu_t$ with respect to~$\nu^*$ decreases exponentially fast to 0. Then Talagrand's inequality allows to compare the relative free energy to the Wasserstein distance in case of uniform convexity of the interaction potential $W$, and so they have obtained the decrease to 0 of the quadratic Wasserstein distance between~$\nu_t$ and~$\nu^*$. 

We remark that a huge difference between the preceding Markov
process and the (non-Markov) self-interacting diffusion is that the
asymptotic $\sigma$-algebra is in general not trivial for the
non-Markov process. Nevertheless, we will use a similar mass transport method to show the convergence of the empirical measure $\mu_t$.

\subsection{Main results}\label{ss:Main}

Our results are analogous to those of Carrillo et al. \cite{CMV}: under some
assumptions imposed on the interaction potential~$W$, we show that
the empirical measure~$\mu_t$ almost surely converges to
an equilibrium state, which is unique up to translation:
\begin{theorem}[Main result]\label{t:main}
Suppose, that $W\in C^2(\Rn)$, and:
\begin{itemize}
    \item[1)]\label{i:thma1} spherical symmetric: $W(x)=W(|x|)$;
    \item[2)]\label{i:thma2} uniformly convex: denoting by $\Sn$ the $(d-1)-$dimensional sphere, 
    \begin{equation}
    \exists C_W>0 : \quad \forall x\in \Rn, \forall v\in \Sn ,
    \qquad \left.\frac{\partial^2 W}{\partial v^2}\right|_{x}\ge C_W;
    \end{equation}
    \item[3)]\label{i:thma3} $W$ has at most a polynomial growth: for some polynomial $P$, we have 
    \begin{equation}\label{eq:domination}
    \forall x\in\Rn \quad \vert W(x)\vert + |\nabla W(x)| + \|\nabla^2 W(x)\|\leq P(|x|);
    \end{equation}
\end{itemize}
Then, there exists a unique symmetric 
density $\rho_{\infty}:\Rn\to\bbR_+$, such that almost surely, there exists $c_\infty$ such that  
$$
\mu_t := \frac{1}{t} \int_0^t \delta_{X_s}\mathrm{d}s \xrightarrow[t\to\infty]{*-weakly}
\rho_{\infty}(x-c_{\infty}) \, \mathrm{d}x.
$$
Moreover, there exists $a>0$ such that the speed of convergence of $\mu_t$ toward $\rho_\infty(\cdot +c_\infty)$ for the Wasserstein distance is at least $\exp\{-a\sqrt[k+1]{\log t}\}$, where $k$ is the degree of $P$.
\end{theorem}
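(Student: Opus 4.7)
\begin{ideaproof}
The plan is to adapt the free-energy strategy of Carrillo--McCann--Villani~\cite{CMV} to the present self-interacting diffusion, replacing the (unknown) law $\nu_t$ of the McKean--Vlasov process by the observable empirical measure $\mu_t$, and combining it with stochastic approximation on the slow logarithmic time scale $s = \log t$. Introduce the free-energy functional
$$
F(\mu) := \int \rho \log \rho \, dx + \frac{1}{2} \iint W(x-y)\, d\mu(x)\,d\mu(y),
$$
defined for $\mu = \rho\, dx$ with finite entropy, together with the ``instantaneous Gibbs'' map $\Pi(\mu)(dx) := Z_\mu^{-1} e^{-W*\mu(x)}\,dx$ from the introduction. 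Under the three hypotheses on $W$, the entropy is displacement-convex and the interaction energy is uniformly displacement-convex modulo translations, so the minimizers of $F$ form a single translation orbit $\{\rho_\infty(\cdot - c) : c \in \Rn\}$, where $\rho_\infty$ is spherically symmetric and a fixed point of $\Pi$.

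After the time change $s = \log t$, the factor $1/t$ in front of the drift of~\eqref{eq:AABM2} becomes $ds$, so that the evolution of $\mu_t$ takes the form of a noisy Euler scheme for the Wasserstein gradient flow of $F$. Applying It\^o's formula to $F(\mu_t)$, and using that the stationary measure of the ``frozen'' SDE (where $\mu_t$ in the drift is replaced by a fixed $\nu$) is exactly $\Pi(\nu)$, I expect an almost-Lyapunov inequality of the form
$$
\frac{d}{ds}\bigl(F(\mu_s) - F(\rho_\infty(\cdot - c_s))\bigr) \le -C\,\bigl(F(\mu_s) - F(\rho_\infty(\cdot - c_s))\bigr) + R(s),
$$
where $c_s := \int x\, d\mu_s(x)$ is the center of mass and $R(s)$ gathers a martingale term and $O(e^{-s})$ remainders. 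The coercive constant $C$ is furnished by a log-Sobolev inequality for $\Pi(\mu_s)$, coming in turn from the $C_W$-uniform convexity of $-\log\Pi(\mu) = \log Z_\mu + W*\mu$. Together with Talagrand's $T_2$ inequality, this should yield an exponential decay (in $s$) of the Wasserstein-$2$ distance from $\mu_s$ to the translation orbit of $\rho_\infty$, modulo the noise.

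The main obstacle will be threefold. \emph{First}, the center of mass $c_s$ must be shown to converge a.s.\ to some $c_\infty$: by the spherical symmetry of $W$ the drift acting on $c_t$ vanishes, so $c_t$ is essentially a martingale whose quadratic variation is controlled by the Wasserstein rate obtained above, granting a.s.\ convergence via the martingale convergence theorem. \emph{Second}, a priori moment bounds of arbitrary order for $\mu_s$ must be established: since $|\nabla W|$ may grow polynomially of degree up to $k$, the remainder $R(s)$ involves moments of $\mu_s$ of comparable order, which have to be bootstrapped using a Lyapunov function of the form $V(x) = |x|^{k+1}$ against the attractive drift. \emph{Third}, and most delicately, extracting the announced rate: because $R$ depends polynomially (through those moments) on the very Wasserstein distance one is estimating, the Gronwall argument self-improves only upon iteration. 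Closing this bootstrap loop $k+1$ times is exactly what produces the subexponential rate $\exp\{-a\sqrt[k+1]{\log t}\}$ after undoing the time change $s = \log t$; I expect this self-improvement step to be the main technical difficulty.
\end{ideaproof}
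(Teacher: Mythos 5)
Your high-level scaffolding matches the paper well: use the free energy $\mF$ with the map $\Pi$, work on the logarithmic time scale, identify the minimizers of $\mF$ as a translation orbit of the symmetric density $\rho_\infty$, control $W_2$ via a Talagrand-type inequality, and split the statement into convergence of the centered measures plus convergence of the center. But the plan has several concrete gaps that the paper's actual proof is designed around.

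\textbf{The free energy of the empirical measure is $+\infty$.} The empirical occupation measure $\mu_t = \frac1t\int_0^t\delta_{X_s}\,\mathrm{d}s$ is a.s.\ singular with respect to Lebesgue measure, so $\mathcal H(\mu_t)=+\infty$ and therefore $F(\mu_t)=+\infty$ for all $t$. There is nothing to apply It\^o's formula to. The paper circumvents this by never evaluating $\mF$ at $\mu_t$: at a well-chosen discretization time $T_i$ it smoothens $\mu_{T_i}$ by convolution to get $\mu_{T_i}^{(h)}$, pushes that along the deterministic Euler-type iteration $\Phi_i^j$, tracks the decrease of $\mF$ along this \emph{deterministic} shadow trajectory (Proposition~\ref{p:exp-decrease}), and separately bounds the $\mT_P$-distance between the shadow and the true $\mu_{T_j}$ (Propositions~\ref{p:one-step} and~\ref{p:Euler-smooth}). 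That decoupling is the heart of the argument and is missing from your proposal.

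\textbf{Your ``center'' is the wrong one, and it is not a martingale.} You take $c_s := \int x\,d\mu_s(x)$, the center of mass, whereas the paper's center is the point where $\nabla W*\mu$ vanishes. The two agree only if $W$ is quadratic. The potential center is what makes the whole machinery run: because $\nabla^2 W*\mu \ge C_W I$, the process $|X_t-c_t|$ can be dominated by (the norm of) an Ornstein--Uhlenbeck process (Proposition~\ref{p:OU-comparison}), which is what gives global non-explosion, the a.s.\ exponential-tail estimates $\mu_t\in K_{\alpha,C}$, the logarithmic bound $L_n\lesssim\log n$, and the uniform bounds on the various $\mT_P$-costs. The center of mass enjoys none of these properties for general convex $W$. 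Moreover, neither center is ``essentially a martingale'': $\dot m_t=\frac1t(X_t-m_t)$ and $\dot c_t = \frac1t(\nabla^2W*\mu_t(c_t))^{-1}\nabla W(X_t-c_t)$ are deterministic functionals of the past, with drift of order $1/t$; the spherical symmetry of $W$ does not make the instantaneous drift vanish. The paper instead proves that the series $\sum_n|c_{T_{n+1}}-c_{T_n}|$ converges, using the $\mT_P$-Lipschitz continuity of the map $c(\cdot)$ on $K_{\alpha,C}$ together with the already-established rate on $\mT_P(\mu_{T_n}^c,\rho_\infty)$.

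\textbf{The mechanism producing $\exp\{-a\sqrt[k+1]{\log t}\}$ is different.} In the paper the rate does not come from iterating a Gronwall bootstrap $k+1$ times. It comes out in one shot from Lemma~\ref{l:maj-varphi}: the energy dissipation along the Euler scheme is bounded below by $g(\mF)$ with $g(E)\sim C_7\,E/|\log E|^k$ for small $E$; the logarithmic correction of power $k$ arises because, in order to use the displacement-convexity estimate, one must truncate the tails at a radius $L\sim|\log\mF|$, and the polynomial $P(4L)$ (degree $k$) appears as a denominator. Solving $\dot y=-g(y)/(2t)$ directly gives $y(t)\sim\exp\{-a\sqrt[k+1]{\log t}\}$. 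Your bootstrap picture might yield something of the same flavor, but as stated it is a heuristic without a mechanism; the relevant technical step is the proof of Lemma~\ref{l:maj-varphi}.

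\textbf{The coercivity is not a log-Sobolev input.} A Poincar\'e constant for the frozen-measure diffusion does enter (via the Cattiaux--Guillin concentration bound, Proposition~\ref{p:CG}), but only to estimate the one-step error $\mT_P^{c}(\mu_{[T_n,T_{n+1}]},\Pi(\mu_{T_n}))$, not to provide a Lyapunov contraction on $\mF$. The contraction comes from displacement convexity of $\mF$ together with the fact that $\Pi(\mu)$ globally minimizes $\mF_{W*\mu}$.

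So: right ingredients at the level of philosophy (free energy, $\Pi$, logarithmic clock, centering, Talagrand), but the specific dynamical quantities you propose to differentiate are either infinite ($F(\mu_t)$) or lack the structure you need ($c_s$ as center of mass / martingale), and the announced rate is attributed to the wrong mechanism.
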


\begin{remark}\label{rk:spherical}
The assumption~\ref{i:thma1}) corresponds to the physical assumption of the interaction force between two particles being directed along the line joining them, and to the third Newton's law (that is the equality between the action and the reaction forces). The symmetry assumption cannot be omitted, as shows an example in the appendix.
\end{remark}
\begin{remark}
We will suppose in the following, without any loss of generality, that $P\ge 1$ is of degree $k\ge 2$ and such that for all $x,y\in \Rn$, we have $P(\vert x-y\vert) \le P(\vert x\vert)P(\vert y\vert)$. Indeed, we choose $P(\vert x|) = A (1 +|x|^k)$, where $A$ is a constant large enough. This will be used in~\S\ref{ss:measures}.
\end{remark}
The origin of the following remark will be clear after the
discussion in~\S\ref{s:free-energy}
\begin{remark}
The density $\rho_{\infty}$ is the same limit density as in the
result of \cite{CMV}, uniquely defined (among the centered
densities) by the following property: $\rho_{\infty}$ is a
positive function, proportional to~$e^{-W*\rho_{\infty}}$.
\end{remark}

We can also consider the same drifted motion in presence of an external potential $V$. For this, the following result is a generalization of Theorem~\ref{t:main} (where we replace $C_W$ by $C$ in the notation): 
\begin{theorem}\label{t:main-2}
Let $X$ be the solution to the equation
\begin{equation}\label{eq:eds-V}
\mathrm{d}X_t = \sqrt{2} \mathrm{d}B_t -\left( \nabla V(X_t) + \frac{1}{t}\int_0^t \nabla W(X_t-X_s) \mathrm{d}s\right) \,\mathrm{d}t.
\end{equation}
Suppose, that $V\in C^2(\Rn)$ and $W\in C^2(\Rn)$, and:
\begin{itemize}
    \item[1)] spherical symmetric: $W(x)=W(|x|)$;
    \item[2)] $V$ and $W$ are convex, $\lim_{|x|\rightarrow \infty}V(x) = +\infty$, and either $V$ or $W$ is uniformly convex:
$$
    \exists C>0 : \quad  \forall x\in \Rn, \forall v\in \Sn , \, \left.\frac{\partial^2 V}{\partial v^2}\right|_{x}\ge C\quad \text{ or } \quad  \forall x, \forall v,\ \left.\frac{\partial^2 W}{\partial v^2}\right|_{x}\ge C;
$$
    \item[3)] $V$ and $W$ have at most a polynomial growth: for some polynomial $P$ we have $\forall x\in\Rn$
    \begin{equation}\label{eq:domination-2}
    \vert V(x)\vert + \vert W(x)\vert + |\nabla V(x)| + |\nabla W(x)| + \|\nabla^2 V(x)\| +  \|\nabla^2 W(x)\|\leq P(|x|).
    \end{equation}
\end{itemize}
Then, there exists a unique density $\rho_{\infty}:\Rn\to\bbR_+$, such that almost surely
$$
\mu_t = \frac{1}{t}\int_0^t \delta_{X_s}\mathrm{d}s \xrightarrow[t\to\infty]{*-weakly}
\rho_{\infty}(x) \, \mathrm{d}x.
$$
\end{theorem}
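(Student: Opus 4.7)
The plan is to adapt the proof of Theorem~\ref{t:main} to the presence of the confining potential $V$. The natural modified free energy is
$$
\mathcal{F}(\mu) := \int V\, \mathrm{d}\mu + \frac{1}{2}\int (W*\mu)\, \mathrm{d}\mu + \int \rho\log\rho\, \mathrm{d}x,
$$
defined for $\mu=\rho\,\mathrm{d}x$. Coerciveness $V(x)\to+\infty$ together with the polynomial bound~\eqref{eq:domination-2} makes the sublevel sets of $\mathcal{F}$ tight, so $\mathcal{F}$ attains its infimum, and the Euler--Lagrange equation for a minimizer yields the desired fixed-point relation $\rho_\infty \propto e^{-V-W*\rho_\infty}$.

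Uniqueness of $\rho_\infty$, together with a Talagrand-type inequality $\mathcal{F}(\mu)-\mathcal{F}(\rho_\infty)\geq (C/2)W_2^2(\mu,\rho_\infty)$, I would derive from McCann's displacement convexity. The three pieces of $\mathcal{F}$ are each geodesically convex along Wasserstein geodesics: the entropy automatically, the linear term $\int V\,\mathrm{d}\mu$ with the convexity modulus of $V$, and the interaction term $(1/2)\int W*\mu\,\mathrm{d}\mu$ with twice the convexity modulus of $W$ (after Carrillo--McCann--Villani). Under the assumption that at least one of $V,W$ is $C$-uniformly convex and the other is convex, the full functional $\mathcal{F}$ is strictly $C$-displacement convex. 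With these two ingredients, the rest of the argument is a direct transposition of the scheme for Theorem~\ref{t:main}: $\mu_t$ solves a stochastic approximation equation whose limiting mean dynamics is the McKean--Vlasov flow with drift $-\nabla V - \nabla W *\mu$; one shows that $\mathcal{F}(\mu_t)$ decays along trajectories modulo controlled fluctuation terms; and the Talagrand inequality converts this decay into almost sure convergence $W_2(\mu_t,\rho_\infty)\to 0$. In contrast with Theorem~\ref{t:main}, translation invariance is broken by $V$, so no recentering constant $c_\infty$ appears and the limit is genuinely unique.

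The principal obstacle I anticipate is the sub-case in which only $V$ (but not $W$) is uniformly convex. Here the interaction part of $\mathcal{F}$ is only \emph{flat} in the Wasserstein geometry, so one must verify carefully that its displacement convexity -- which the mere convexity of $W$ provides via a Jensen-type argument on the convolution -- is genuinely non-negative along \emph{all} geodesics, so that it does not cancel the $C$-convexity coming from $V$. A secondary technical difficulty is tail control for $\mu_t$ on $\Rn$: confinement by $V$ ensures tightness of $(\mu_t)_{t\ge 0}$, while the polynomial bound~\eqref{eq:domination-2} is what keeps the martingale and remainder terms in the stochastic approximation integrable -- these are the only places where the non-compactness of the ambient space is really felt.
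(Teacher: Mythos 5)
Your plan matches the paper's own (deliberately terse) proof of Theorem~\ref{t:main-2}: the paper simply observes that the argument ``coincides with the proof of Theorem~\ref{t:main} almost identically, it suffices to add $V$,'' and identifies the modified free energy $\mF_{V,W}(\mu)=\mathcal{H}(\mu)+\int V\,\mathrm{d}\mu+\frac12\iint W(x-y)\,\mu(\mathrm{d}x)\mu(\mathrm{d}y)$ as the Lyapunov functional, exactly as you do. Your list of ingredients (coercivity of $V$ for tightness and existence of $\rho_\infty$, the Euler--Lagrange/fixed-point relation, displacement convexity for uniqueness and the Talagrand-type inequality, then the stochastic-approximation scheme of Theorem~\ref{t:main}) is the right one.

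One remark on the sub-case you flag. The case you worry about (only $V$ uniformly convex, $W$ merely convex) is in fact the \emph{easy} one: McCann's argument shows that for convex (here spherically symmetric, hence even) $W$, the interaction energy is displacement convex with non-negative modulus along \emph{every} Wasserstein geodesic, so it can only reinforce, not cancel, the $C$-modulus coming from $V$. The genuinely delicate sub-case is the opposite one: only $W$ uniformly convex, $V$ merely convex and coercive. There the interaction energy is $C_W$-displacement convex \emph{only along barycenter-preserving geodesics} (along a pure translation it is flat, being translation-invariant), and $V$ alone contributes modulus $\ge 0$, not $\ge C$. So the Talagrand inequality with constant $C$ does not hold naively over all pairs of measures; one still needs the recentering machinery of Theorem~\ref{t:main} -- now with the center defined as the minimum of $V+W*\mu$, well-defined and bounded thanks to the coercivity of $V$ -- and a convergence-of-centers argument \`a la Theorem~\ref{t:centers}. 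The conclusion that ``$c_\infty$ does not appear'' remains true, because $V$ pins the limiting center at a deterministic point, but the centering step is not dispensable in the $W$-uniformly-convex sub-case; it is only hidden.
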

As the proof of the latter Theorem coincides with the proof of Theorem~\ref{t:main} almost identically, we do not present it here. It suffices to add $V$ in the arguments below. Moreover, if $V$ is symmetric with respect to some point $q$, then the corresponding density $\rho_\infty$ is also symmetric with respect to the same point $q$.

The proof of Theorem \ref{t:main} is split into two parts. Consider a
natural ``reference point'' for a measure $\mu$:
\begin{definition}\label{def:mu-c}
Consider a measure $\mu$ on $\Rn$, decreasing fast enough for $W*\mu$ to be defined. The \emph{center} of $\mu$ is the point
$c_\mu=c(\mu)$ such that $\nabla W*\mu (c_\mu)=0$, or equivalently, the point
where the convolution $W*\mu$ (the potential generated by $\mu$)
takes its minimal value. Also, we define the \emph{centered} measure
$\mu^c$ as the translation of the measure $\mu$, bringing $c_\mu$
to the origin:
\begin{equation}
\mu^c(A)=\mu(A+c_\mu).
\end{equation}
\end{definition}
\begin{remark}
This notion of center had been previously introduced by Raimond in \cite{OTwist}. Indeed, to study the linear attracting $d$-dimensional case of Brownian polymers, Raimond has defined the center and proved that the process remains close to $c_t=c(\mu_t)$ (and that $c_t$ converges a.s.).\\ 
A sufficient condition for the existence of the center is that $W$ is convex, and it is unique if $W$ is stricty convex.
\end{remark}

The first part of the proof of Theorem~\ref{t:main} consists in 
proving the convergence of centered occupation measures:
\begin{theorem}\label{t:centered}
Under the assumptions of Theorem~\ref{t:main}, for some symmetric density
function $\rho_{\infty}:\Rn\to\bbR_+$, we have almost surely
$$
\mu_t^c \xrightarrow[t\to\infty]{*-weakly} \rho_{\infty}(x) \, \mathrm{d}x
$$
\end{theorem}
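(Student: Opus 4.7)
The plan is to follow the free-energy strategy of Carrillo, McCann and Villani~\cite{CMV}, adapted to the random empirical measure $\mu_t$ via stochastic approximation in the spirit of Bena\"im, Ledoux and Raimond~\cite{BLR}. The central object is the (nonlinear) free energy
$$
F(\mu) := \int_{\Rn} \rho \log \rho \, \mathrm{d}x + \frac{1}{2}\int_{\Rn}\int_{\Rn} W(x-y)\,\mathrm{d}\mu(x)\,\mathrm{d}\mu(y),
$$
defined on absolutely continuous probability measures $\mu$ with density $\rho$. Translation invariance of $W$ gives $F(\mu)=F(\mu^c)$, and uniform convexity of $W$ makes $F$ strictly convex on the affine slice of centered measures. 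This forces the existence of a unique centered minimizer $\rho_\infty$, characterized by the self-consistent Gibbs relation $\rho_\infty\propto e^{-W*\rho_\infty}$.

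The first step is tightness of $(\mu_t^c)_{t\geq 1}$. Uniform convexity of $W$ ensures that the drift $-\nabla W*\mu_t(X_t)$ pulls $X_t$ back towards $c_{\mu_t}=c(\mu_t)$ in a linearly-restoring fashion, so that $X_t-c_{\mu_t}$ is dominated by an Ornstein--Uhlenbeck-type diffusion; combined with the growth hypothesis~\eqref{eq:domination}, this yields uniform polynomial moment bounds on $\mu_t^c$. The heart of the argument is then a Lyapunov analysis of $F(\mu_t^c)$. After regularizing $\mu_t$ (for instance by convolution with a heat kernel at a well-chosen scale) so that $F$ is finite, an application of It\^o's formula along~\eqref{eq:AABM2} yields a decomposition
$$
\mathrm{d}F(\mu_t)=-\frac{1}{t}\,D(\mu_t)\,\mathrm{d}t+\mathrm{d}M_t+\varepsilon_t\,\mathrm{d}t,
$$
where $D(\mu)=\int_{\Rn}|\nabla(\log\rho+W*\mu)|^2\rho\,\mathrm{d}x$ is a relative Fisher-information dissipation vanishing only at translates of $\rho_\infty$, $\mathrm{d}M_t$ is a martingale with quadratic variation of order $t^{-2}$ times polynomial moments of $X_t$, and $\varepsilon_t$ is a regularization error.

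The uniform convexity of $W$ implies a Talagrand/HWI-type inequality on centered measures, of the form $F(\mu^c)-F(\rho_\infty)\leq C_W^{-1}D(\mu^c)$. Combined with the Lyapunov decomposition, and after the natural time change $\tau:=\log t$, this yields a Robbins--Monro-type stochastic recursion for $F(\mu_{e^\tau}^c)-F(\rho_\infty)$ with an exponentially decaying deterministic potential. Standard stochastic-approximation arguments then give $F(\mu_t^c)\to F(\rho_\infty)$ almost surely, and Talagrand's inequality, again guaranteed by uniform convexity of $W$, transfers this into $W_2(\mu_t^c,\rho_\infty)\to 0$ almost surely, which implies the claimed weak convergence.

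The main obstacle is quantitative control of the regularization error $\varepsilon_t$: the empirical measure $\mu_t$ is singular, so $F(\mu_t)$ is only literally defined after smoothing, and one must show that the resulting errors are strictly smaller than the dissipation $D(\mu_t)/t$. This is exactly where the polynomial-growth assumption~\eqref{eq:domination} is used, since all error estimates involve polynomial moments of $X_t$ which must be controlled uniformly on the logarithmic time scale $\tau=\log t$. The balance between these error terms and the dissipation produces the rate $\exp(-a\sqrt[k+1]{\log t})$ announced in Theorem~\ref{t:main}. A secondary technical point is that the center $c_{\mu_t}$ itself evolves with $t$, and one must verify that this slow drift does not interfere with the Lyapunov decrease of $F(\mu_t^c)$.
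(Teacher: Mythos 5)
The broad skeleton you describe — free energy as Lyapunov function, Ornstein--Uhlenbeck comparison for tightness of $\mu_t^c$, uniform convexity of $W$, and Talagrand's inequality $W_2^2(\mu^c,\rho_\infty)\le\frac{2}{C_W}\mF(\mu|\rho_\infty)$ to pass from energy decrease to transport convergence — is the right skeleton and matches the paper. But the central mechanism you propose is wrong. You identify the dissipation along the dynamics as the Fisher information $D(\mu)=\int|\nabla(\log\rho+W*\mu)|^2\rho\,\mathrm{d}x$, which is what one gets for the McKean--Vlasov \emph{Fokker--Planck} flow. The empirical measure here does not follow a Fokker--Planck equation: the effective (time-changed) flow is the linear interpolation $\mu'=\Pi(\mu)-\mu$, and differentiating $\mF$ along this flow yields instead the \emph{relative entropy} $\varphi_\mu(\mu)-\varphi_\mu(\Pi(\mu))=\mathcal{H}(\mu\,|\,\Pi(\mu))$, not the Fisher information. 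Consequently, the linear Talagrand/HWI-type bound $\mF(\mu^c|\rho_\infty)\le C_W^{-1}D(\mu^c)$ you invoke does not apply. What the paper actually proves (Lemma~\ref{l:maj-varphi}) is a coercivity bound with a logarithmic loss: for $\mu\in K_{\alpha,C}$, $\varphi_\mu(\mu)-\varphi_\mu(\Pi(\mu))\ge C_7\,\mF(\mu|\rho_\infty)/|\log\mF(\mu|\rho_\infty)|^k$ for small energies. This loss is unavoidable because the cross-term $\iint(\nu_s-\mu)W(\nu_s-\mu)$ is controlled only through a cutoff $L\sim|\log\mF|$, and it is precisely this loss — not the regularization error, as you suggest — that produces the rate $\exp(-a\sqrt[k+1]{\log t})$ via the comparison ODE $\dot y=-\frac{1}{t}\frac{g(y)}{2}$. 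A linear log-Sobolev bound of the type you claim would instead yield a \emph{polynomial} rate $t^{-a}$, which is strictly faster than what the theorem asserts; this mismatch alone signals that the dissipation has been misidentified.

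There is also a significant gap in how you propose to control the stochastic part. You assert that ``an application of Itô's formula along~\eqref{eq:AABM2}'' yields $\mathrm{d}\mF(\mu_t)=-\frac{1}{t}D(\mu_t)\,\mathrm{d}t+\mathrm{d}M_t+\varepsilon_t\,\mathrm{d}t$ with $M_t$ a martingale. This is not justified: $\dot\mu_t=\frac{1}{t}(-\mu_t+\delta_{X_t})$ is a deterministic functional of the path, and $\delta_{X_t}$ is not an innovation of the form $\Pi(\mu_t)+\mathrm{noise}$; turning the fluctuation $\delta_{X_t}-\Pi(\mu_t)$ into a controllable martingale-plus-remainder at the level of $\mF$ is exactly the hard part. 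The paper avoids this entirely by discretizing at times $T_n=n^{3/2}$, freezing the potential $W*\mu_{T_n}$ on $[T_n,T_{n+1}]$, coupling $X$ to the frozen process $Y$, and applying the Cattiaux--Guillin deviation inequality (Proposition~\ref{p:CG}) to show $\mu_{[T_n,T_{n+1}]}\approx\Pi(\mu_{T_n})$ in the bespoke transport metric $\mT_P$ (Proposition~\ref{p:one-step}); the energy decrease is then run entirely on the \emph{deterministic} smoothened Euler iterates $\Phi_i^j(\mu_{T_i}^{(h)})$, and the stochastic-to-deterministic error is controlled separately by a Lipschitz-and-triangle accumulation (Proposition~\ref{p:Euler-smooth}). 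To make your Itô route work you would have to reproduce this entire quantitative machinery, and you would in any case have to replace the Fisher-information/log-Sobolev ansatz by the relative-entropy dissipation and its logarithmically lossy coercivity bound.
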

The second is the convergence of centers:
\begin{theorem}\label{t:centers}
Under the assumptions of Theorem~\ref{t:main}, almost surely the
centers $c_t:=c(\mu_t)$ converge to some (random) limit $c_{\infty}$.
\end{theorem}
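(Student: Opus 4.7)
The plan is to control $c_t$ by implicit differentiation of the defining relation $\nabla W*\mu_t(c_t)=0$. Since $\tfrac{d\mu_t}{dt}=\tfrac{1}{t}(\delta_{X_t}-\mu_t)$, this yields
\begin{equation*}
\dot c_t \;=\; -\frac{1}{t}\,A_t^{-1}\,\nabla W(c_t-X_t),\qquad A_t:=\nabla^2 W*\mu_t(c_t),
\end{equation*}
with $\|A_t^{-1}\|\le 1/C_W$ by uniform convexity. Setting $Y_t:=X_t-c_t$, it therefore suffices to prove the almost-sure convergence of $\int_1^\infty \frac{1}{t}A_t^{-1}\nabla W(-Y_t)\,dt$.

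The next step is to exploit the approximate ergodicity of $Y_t$. By Theorem~\ref{t:centered}, $\mu_t^c\to\rho_\infty$ weakly, and the SDE for $Y_t$ is
\begin{equation*}
dY_t=\sqrt 2\,dB_t-\nabla W*\mu_t^c(Y_t)\,dt+\tfrac{1}{t}A_t^{-1}\nabla W(-Y_t)\,dt,
\end{equation*}
a vanishing perturbation of the time-homogeneous Langevin SDE $dY^\infty=\sqrt 2\,dB-\nabla W*\rho_\infty(Y^\infty)\,dt$, whose invariant measure is $\rho_\infty$ itself (the fixed-point property recalled after Theorem~\ref{t:main}). Since $\rho_\infty$ is symmetric and $\nabla W$ is odd, $\int\nabla W(-y)\rho_\infty(dy)=0$: the asymptotic drift on $c_t$ averages to zero at equilibrium. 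To cash in this cancellation I would solve the Poisson equation $L^\infty\psi=A_\infty^{-1}\nabla W(-\cdot)$ (the spectral gap of $L^\infty$ follows from uniform convexity of $W*\rho_\infty$; $\psi$ has polynomial growth thanks to hypothesis (3)), apply It\^o's formula to $\psi(Y_t)$ and integrate by parts in $1/t$ to obtain
\begin{equation*}
\int_1^T \!\frac{A_\infty^{-1}\nabla W(-Y_t)}{t}\,dt = \frac{\psi(Y_T)}{T}-\psi(Y_1)+\int_1^T\!\frac{\psi(Y_s)}{s^2}\,ds -\sqrt 2\!\int_1^T\!\frac{\nabla\psi(Y_s)\cdot dB_s}{s}+\mathcal R_T,
\end{equation*}
where $\mathcal R_T$ collects the discrepancies between the actual generator of $Y_t$ and $L^\infty$. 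Polynomial moment bounds on $Y_t$ make the boundary and the $s^{-2}$-weighted terms converge; the It\^o integral has finite quadratic variation $\int_1^\infty t^{-2}\|\nabla\psi(Y_s)\|^2ds<\infty$ and converges a.s.; one then absorbs the residual $(A_t^{-1}-A_\infty^{-1})\nabla W(-Y_t)/t$ into $\mathcal R_T$.

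The main obstacle is the remainder $\mathcal R_T$. Controlling it requires (i) uniform-in-time polynomial moment bounds on $Y_t$, obtainable by comparing $Y_t$ with an Ornstein--Uhlenbeck-type process as sketched in \S\ref{s:introduction}, and (ii) quantitative convergence of $\mu_t^c$ to $\rho_\infty$---in Wasserstein form, at the rate established in Theorem~\ref{t:main}---so that both $\|A_t-A_\infty\|$ and the localized norm of $\nabla W*(\mu_t^c-\rho_\infty)$ decay fast enough that their integrals against $t^{-1}\nabla\psi(Y_t)$ are absolutely convergent. Once this is achieved, $c_t$ is Cauchy almost surely as $T\to\infty$, and the (random) limit $c_\infty$ exists.
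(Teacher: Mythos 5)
Your strategy is a genuine alternative to the paper's. The paper proceeds by discretization: it bounds $|c_{T_{n+1}}-c_{T_n}|$ using the $\mT_P$-Lipschitz property of the center map (Lemma~\ref{l:mT_P}), then splits $\mT_P^{c_{T_n}}(\mu_{[T_n,T_{n+1}]},\mu_{T_n})$ by triangle inequality into the one-step Euler error (Proposition~\ref{p:one-step}) plus the fixed-point gap $\mT_P^{c_{T_n}}(\Pi(\mu_{T_n}),\mu_{T_n})$, which it controls via the Lipschitz constant of $\Pi$ and the already-proved rate $\mT_P(\mu_{T_n}^c,\rho_\infty)\le e^{-a\sqrt[k+1]{\log T_n}}$; summing gives a convergent series. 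You instead apply the Poisson-equation/martingale decomposition from averaging theory: write $\dot c_t$ as a slowly-varying drift on $Y_t=X_t-c_t$, exploit the cancellation $\int\nabla W(-y)\rho_\infty(dy)=0$ (which is the same structural fact the paper uses, in the guise of $\Pi(\rho_\infty)=\rho_\infty$ having center $0$), solve $L^\infty\psi=A_\infty^{-1}\nabla W(-\cdot)$, and apply It\^o plus integration by parts in $1/t$. Both arguments ultimately draw on the same two quantitative inputs: the Ornstein--Uhlenbeck moment bounds on $Y_t$ and the convergence rate of $\mu_t^c$ to $\rho_\infty$, which makes the respective error/remainder series integrable against $t^{-1}$.

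The trade-off: the paper's route is more elementary and leans on machinery it has already built ($\mT_P$-Lipschitz of $c$ and $\Pi$, Propositions~\ref{p:one-step} and~\ref{p:exp-conv}); your route imports a standard but heavier toolkit, and leaves nontrivial work unfinished. Specifically, you assert but do not establish (i) existence, regularity, and polynomial growth of the Poisson solution $\psi$ and of $\nabla\psi$ for the limiting Langevin generator (plausible under uniform convexity and the polynomial bounds on $W$, but this needs a Lyapunov-type argument, not just hypothesis (3)); and (ii) the detailed control of $\mathcal R_T$, which bundles the generator discrepancy $(L_t-L^\infty)\psi$ and the residual $(A_t^{-1}-A_\infty^{-1})\nabla W(-Y_t)/t$ — both dominated by the Wasserstein/$\mT_P$ rate, but the reduction should be written out, since $\mu_t$ (hence $L_t$) is random and one needs the almost-sure version of the rate, not just a moment estimate. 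If you flesh out those two points, the proof closes, and the rate $e^{-a\sqrt[k+1]{\log t}}$ is indeed integrable against $dt/t$ (the paper uses precisely this fact when summing $\exp\{-a\sqrt[k+1]{\tfrac32\log n}\}/n$).
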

It is clear that the two latter theorems imply the main result. Let us sketch their proofs.


\subsection{Outline of the proof and physical interpretation}\label{ss:outline}

\subsubsection{Existence and uniqueness}
First, a standard remark is Markovianization: the behaviour of the
pair $(X_t,\mu_t)$ is Markovian. The reader will find it, together with some
other standard remarks, in \S\ref{ss:Markov}. Unfortunately, the
Markov process $(X_t,\mu_t)$ is infinite-dimensional and, in
general (except for the case of a polynomial interaction $W$), we do not manage to reduce to a finite-dimensional process. So, we
do not use this information directly in order to obtain
interesting properties on $\mu_t$, because the state space is then
too large.

After this remark, we discuss the global existence and uniqueness for the
solutions of~\eqref{eq:AABM2} in \S\ref{ss:global-exist}.

\subsubsection{Discretization}
A next step is discretization: we take a (well-chosen and
deterministic) sequence of times $T_n\to\infty$, with
$T_n\gg T_{n+1}-T_{n}\gg 1$, and consider the behaviour of the
measures~$\mu_{T_n}$. As $T_n\gg T_{n+1}-T_n$, it is natural to
expect (and we will give the corresponding statement) that the
empirical measures $\mu_t$ on the interval $[T_n,T_{n+1}]$ almost
do not change and thus stay close to $\mu_{T_n}$. So, on this
interval we can approximate the solution $X_t$ of~\eqref{eq:AABM2} by the solution of the same equation
with $\mu_t\equiv \mu_{T_n}$:
$$
\mathrm{d}Y_t = \sqrt{2}\, \mathrm{d}B_t - \nabla W*\mu_{T_n}(Y_t) \, \mathrm{d}t,\quad t\in [T_n,T_{n+1}],
$$
in other words, by a Brownian motion in a potential~$W*\mu_{T_n}$ that does not depend on time.

On the other hand, the series of general term $T_{n+1}-T_n$ increases. So, using Birkhoff Ergodic Theorem\footnote{see for
instance \cite{yosida}, chap. XIII}, we see that the (normalized)
distribution $\mu_{[T_n,T_{n+1}]}$ of values of $X_t$ on these
intervals becomes (as $n$ increases) close to the equilibrium measures
$\Pi(\mu_{T_n})$ for a Brownian motion in the potential
$W*\mu_{T_n}$, where (see \S\ref{s:discretization})
$$
\Pi(\mu)(\mathrm{d}x) := \frac{1}{Z(\mu)} e^{-W*\mu(x)} \,
\mathrm{d}x,\quad Z(\mu):=\int_{\Rn} e^{-W*\mu(x)} \,
\mathrm{d}x.$$
But
$$
\mu_{T_{n+1}} = \frac{T_n}{T_{n+1}} \,\mu_{T_n} +
\frac{T_{n+1}-T_n}{T_{n+1}} \,\mu_{[T_n,T_{n+1}]},
$$
so we have
$$
\mu_{T_{n+1}} \approx \frac{T_n}{T_{n+1}} \,\mu_{T_n} +
\frac{T_{n+1}-T_n}{T_{n+1}} \,\Pi(\mu_{T_n}) = \mu_{T_n} +
\frac{T_{n+1}-T_n}{T_{n+1}} (\Pi(\mu_{T_n})-\mu_{T_n}),
$$
and
$$
\frac{\mu_{T_{n+1}}-\mu_{T_n}}{T_{n+1}-T_n} \approx
\frac{1}{T_{n+1}}(\Pi(\mu_{T_n})-\mu_{T_n}).
$$

This could motivate us to approximate the behaviour of the measures $\mu_t$
by trajectories of the flow (on the infinite-dimensional space of
measures)
\begin{equation}\label{eq:9-3/4}
\dot{\mu}=\frac{1}{t}(\Pi(\mu)-\mu),
\end{equation}
or after a logarithmic change of variable $\theta=\log t$,
\begin{equation}\label{eq:flow}
\mu'=\Pi(\mu)-\mu.
\end{equation}

In fact, it is not \emph{a priori} clear that the flow defined by~\eqref{eq:flow} exists, as the space of measures is infinite-dimensional. Though the flow can be shown to be well defined on a subspace of exponentially decreasing measures, we prefer to avoid all these problems by working directly with the discretization model in \S\ref{s:discretization}. Nevertheless, this flow serves very well in motivating the considered functions and lemmas describing their behaviour, as the discretized procedure we have is in fact the Euler method of finding solutions to~\eqref{eq:flow}.

\subsubsection{Physical interpretation: gas re-distribution}\label{ss:physics}

Before proceeding further, let us give a physical interpretation to the flow~\eqref{eq:flow}, predicting its asymptotic behaviour.
Namely, note that a Brownian motion drifted by some potential~$V$,
$$
\mathrm{d}X_t = \sqrt{2} \mathrm{d}B_t -\nabla V (X_t)\mathrm{d}t,
$$
can be thought as movement of gas particles under this potential, and the stationary probability measure, $m=\frac{1}{Z_V} e^{-V} \mathrm{d}x$, is the density with which the gas becomes distributed after some time passes. So, in dimension one, a discrete approximation to the flow~\eqref{eq:flow} can be seen as follows. We take a tube, filled with $W$-interacting gas, separated in a plenty of very small cells (see Fig.~\ref{fig:1}).

\begin{figure}[!h]
    \begin{center}
       \includegraphics{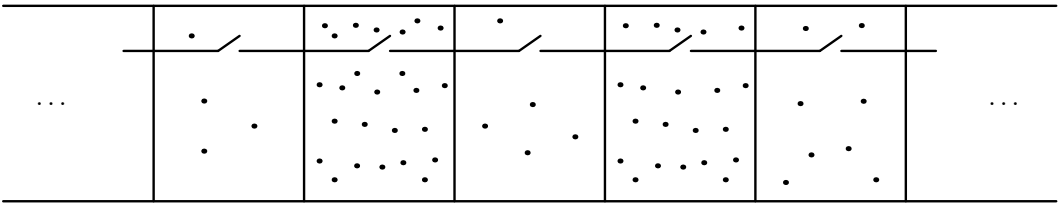}
    \end{center}
\caption{Gas: phase ``separation''}\label{fig:1}
\end{figure}

Each unit of time, small parts (of proportion $\varepsilon$) of gas in these cells 
are separated, allowed to travel along the tube, and are proposed to equilibrate
in the potential generated. This part of all the gas being small, its auto-interaction 
is negligible, thus their new distribution is governed by the field $V:=W*\mu$, generated by the major part of the particles staying fixed to their cells. The small part is then
equilibrated to its weight $\varepsilon$ times $\Pi(\mu)$. 

\begin{figure}[!h]
    \begin{center}
        \includegraphics{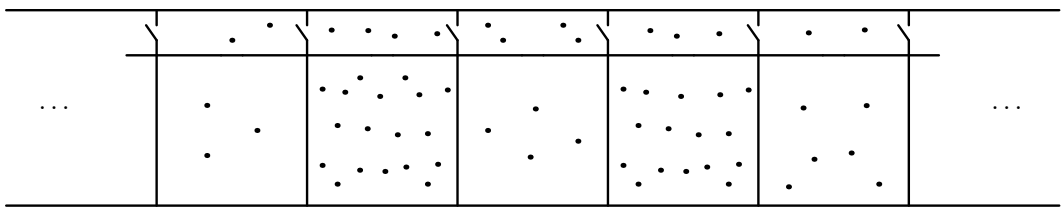}
    \end{center}
\caption{Gas: phase ``re-distribution''}\label{fig:2}
\end{figure}

Then, it is separated again by the cells, thus the distribution after
such step becomes 
$$
(1-\varepsilon) \mu + \varepsilon \Pi(\mu) =
\mu + \varepsilon (\Pi(\mu) -\mu).
$$ 

On the other hand, this procedure does not require any work (in the physical sense) to be
done: the only actions are opening and closing the doors. 
So, due to the general principle, one can expect that the system will tend to its equilibrium. And a tool allowing to show that it is the case is the \emph{free energy}, that we recall in the next paragraph. 

We conclude by noticing that the same physical interpretation can be considered for the problem in any dimension $d$, by placing in 
$\bbR^{d+1}$ two close parallel walls (corresponding to the tube in dimension one), and placing the cells along them.

\subsubsection{Free energy functional}\label{s3:free-energy}

A tool allowing to show the convergence of trajectories of~\eqref{eq:flow} is the \emph{free energy} that, due to a general physical principle, should not increase along the trajectories as long as we do not do any work.

Namely, consider an absolutely continuous probability measure $\mu = \mu(x)\mathrm{d}x$
(by an abuse of notation, we denote the measure and its density by 
the same letter). Imagine $\mu(x)$ as the density of a gas, particles of
which implement the Brownian motion $\sqrt{2} \mathrm{d}B_t$, as
well as interact with the potential $W(x-y)$. 
Then, one defines the \emph{free energy} of $\mu$ as the sum of its ``entropy'' $\mathcal{H}$ and ``potential energy'':
\begin{equation}\label{def:free-VW}
\mF(\mu) := \mathcal{H}(\mu) + \frac{1}{2} \int_{\Rn} \int_{\Rn} \mu(x) W(x-y) \mu(y)\, \mathrm{d}x\, \mathrm{d}y,
\end{equation}
where the entropy of the measure $\mu$ is  
\begin{equation}\label{eq:entropy}
\mathcal{H}(\mu) := \int_{\Rn} \mu(x) \log\mu(x) \mathrm{d}x.
\end{equation}
Then, as we have already said, a general physical principle says that, as we are doing no work on the system, the free energy should decrease, and the system should tend to its minimum. 

Indeed, the free energy  $\mF$ is a Lyapunov function for the flow~\eqref{eq:flow} (when it is defined, though it is defined only for measures that are absolutely continuous with respect to the Lebesgue measure, and otherwise, $\mF(\mu) = +\infty$). This can be seen by joining two statements: on one hand, the measure $\Pi(\mu)$ is (what corresponds to the same physical principle) the unique global minimum of a free energy
$$
\mF_{V}(\mu) := \mathcal{H}(\mu) + \int_{\Rn} V(x) \mu(x)\, \mathrm{d}x,
$$
of a non-interacting Brownian motion in the exterior potential $V=W*\mu$ (see \S\ref{ss:physics} and Lemma~\ref{l:fixed-point} in \S\ref{s:free-energy}). The second is the inequality 
\begin{equation}\label{eq:deriv-free}
\partial_{m-\mu} \mF|_{\mu} \le \mF_{W*\mu}(m) - \mF_{W*\mu}(\mu),
\end{equation}
where $m = \Pi(\mu)$. On one hand, it can be easily seen by an explicit computation, noticing that the entropy part is convex. On the other hand, such a differentiation corresponds to replacing some small parts of the gas distributed with respect to the measure $\mu$ by the one distributed with respect to the measure $m$, and in the right-hand side we have the corresponding free energies of these small parts in the potential, generated by the main part of the gas.

Then, differentiating the function~$\mF$ along the trajectories of the flow~\eqref{eq:flow}, one finds for the solution~$\mu(\theta)$ 
$$
\frac{\mathrm{d}}{\mathrm{d} \theta} \mF(\mu(\theta)) \le \mF_{W*\mu(\theta)}(\Pi_{W*\mu(\theta)}(\mu(\theta))) - \mF_{W*\mu(\theta)}(\mu(\theta)) \le 0,
$$
with the equality if and only if $\mu(\theta)=\Pi(\mu(\theta))$. 

Finally (and we recall these arguments in \S\ref{s:discretization}), the fixed points of $\Pi$ are exactly the translation images of the density $\rho_\infty$, that is the centered global minimum of the functional $\mF$. So, roughly speaking, the function $\mF$ is the Lyapunov function of the flow~\eqref{eq:flow}. The words ``roughly speaking'' here refer to that these arguments are non-rigorous: we avoided showing that the flow is indeed well-defined, the free energy functional is defined only for absolutely continuous measures, etc. Though all of this serves well as a motivation to (rigorous) lemmas of free energy behaviour used in this paper.

We conclude this paragraph by indicating that for the dynamics in presence of an exterior potential $V$  (the case of Theorem~\ref{t:main-2}) one has to replace the free energy function by
$$
\mF_{V,W}(\mu) := \mathcal{H}(\mu) + \int_{\Rn} V(x) \mu(x)\, \mathrm{d}x \\
+ \frac{1}{2} \int_{\Rn} \int_{\Rn} \mu(x) W(x-y) \mu(y)\, \mathrm{d}x\, \mathrm{d}y.
$$
and, instead of $\mF_{W*\mu}$, consider $\mF_{V+W*\mu}$ for the energy of ``small parts''.

\subsubsection{Conclusion}

We are now ready to conclude the sketches of the proofs of Theorems~\ref{t:centered} and~\ref{t:centers} (as it was already mentioned, they immediately imply Theorem~\ref{t:main}).

Namely, we consider the discretized Euler-like evolution of the flow~\eqref{eq:9-3/4}, defined by the rule 
\begin{equation}\label{eq:discr}
\tilde{\mu}_{T_{n+1}} = \tilde{\mu}_{T_n} + \frac{\Delta T_{n}}{T_{n+1}} \, 
(\Pi(\tilde{\mu}_{T_n}) - \tilde{\mu}_{T_n}).
\end{equation}
For the measures $\tilde{\mu}_{T_n}$ defined by this procedure, we obtain (using discrete rigorous analogues of informal arguments of the previous paragraph) some estimates on the speed with which their  free energy decreases. This allows us to estimate distances from these measures to the set of translates of $\rho_{\infty}$ (because they are the only minima of~$\mF$).

Now, we are taking the true random trajectory~$\mu_t$, and estimate the distance from the centered measures~$\mu_t^c$ to the equilibrium point. To do this  at some moment $t$, we choose an earlier moment $t'$, replace the measure $\mu_{t'}$ by a close smooth measure $\tilde{\mu}_{t'}$, and consider deterministic discrete iterates by~\eqref{eq:discr}. On one hand, for this new trajectory the free energy is defined (as we have chosen a smooth approximation), so we control the decrease of energy and hence the distance from the centered measure $\tilde{\mu}^c_t$ to $\rho_{\infty}$. On the other hand, an accurate computation allows us to control the distance between the random measure $\mu_t$ and the approximating deterministic image $\tilde{\mu}_t$ of its smooth perturbation. The sum of these distances then estimates the distance from $\mu_t^c$ to $\rho_{\infty}$, and the obtained estimate tends to 0 as $t\to\infty$. This concludes the proof of Theorem~\ref{t:centered}.

Finally, to prove Theorem~\ref{t:centers}, one first computes the speed of drift of the center~$c_t$, and then shows that the series of general term $|c_{T_{n+1}}-c_{T_n}|$ converges, and the oscillations $osc_{[T_n,T_{n+1}]} c_t$ tend to zero. This implies the existence of the limit of~$c_t$ as~$t\to\infty$.

\subsection{Outline of the paper}
At the beginning of Section~\ref{s:preliminaries}, we show the existence and uniqueness of solutions to~\eqref{eq:AABM2} starting at any positive moment $r>0$. The discussion of this topic at $t=0$ is postponed to the appendix. In the rest of Section~\ref{s:preliminaries}, we present some crucial preliminary computations which are at the basis of our proofs. Most of the material there is not new, except for the combination of stochastic approximation of the empirical measure (see \cite{BLR}) with free energy functionals (see \cite{CMV}) and the achieving of a bound on the convergence rate. Finally, Section~\ref{s:proof} is devoted to the proofs of our main results.

\subsection{Acknowledgments}
The authors are very grateful to two anonymous referees for their useful comments which led to a rewritting of the paper for a better understanding.

\section{Preliminaries}\label{s:preliminaries}

As usual, we denote by $\mathcal{M}(\Rn)$ the space of signed
(bounded) Borel measures on $\Rn$ and by $\mathcal{P}(\Rn)$ its
subspace of probability measures. We will need the following measure
space:
\begin{equation}
\mathcal{M}(\Rn;P):= \{\mu \in \mathcal{M}(\Rn); \int_{\Rn} P(|y|) \,
\vert\mu\vert(\mathrm{d}y)<\infty\},
\end{equation}
where $\vert\mu\vert$ is the variation of $\mu$ (that is
$\vert\mu\vert := \mu^{+} + \mu^{-}$ with $(\mu^{+},\mu^{-})$ the
Hahn-Jordan decomposition of $\mu$: $\mu = \mu^+ -\mu^-$). Belonging to this space will enable us to
always check the integrability of $P$ (and therefore of $W$ and its
derivatives thanks to the domination
condition~\eqref{eq:domination}) with respect to the (random)
measures to be considered. We endow this space with the dual weighted supremum norm (or dual $P$-norm) defined for $\mu \in \mathcal{M}(\Rn;P)$ by
\begin{equation}
\vert \vert \mu\vert \vert_P := \sup_{\varphi\in \mathcal{C}(\Rn); \vert \varphi\vert
\leq P} \left\vert \int_{\Rn} \varphi\, \mathrm{d}\mu\right\vert = \int_{\Rn} P(|y|) \, |\mu|(\mathrm{d}y).
\end{equation}
We recall that $P(|x|)\ge 1$, so that $\|\mu\|_P\ge |\mu(\Rn)|$.
This norm naturally arises in the approach to ergodic results for
time-continuous Markov processes of Meyn \& Tweedie \cite{meT}. It
also makes $\mathcal{M}(\Rn;P)$ a Banach space.

\noindent Next, we consider $\mathcal{P}(\Rn;P) =
\mathcal{M}(\Rn;P) \cap \mathcal{P}(\Rn)$. We remark that both $\mathcal{M}(\Rn;P)$ and $\mathcal{P}(\Rn;
P)$ contain any probability measure with an
exponential tail and, in particular, any compactly supported measure. For any $\kappa>0$, we also define 
\begin{equation}\label{eq:def-P-kappa}
\mathcal{P}_\kappa(\mathbb{R}^d;P):= \{ \mu \in \mathcal{P}(\mathbb{R}^d;P) \, ; \, \vert \vert \mu \vert\vert_P = \int_{\Rn} P(|x|) \mu (\mathrm{d}x) \leq \kappa\}.
\end{equation}

\subsection{Existence and uniqueness of solutions}\label{s:existence}

\subsubsection{Markovian form; local existence and uniqueness}\label{ss:Markov}

First step in studying the trajectories of~\eqref{eq:AABM2}
is to pass to the couple $(X_t,\mu_t)$. A standard remark is that
the behaviour of this couple is infinite-dimensional Markovian (and in general, except for $W$ being polynomial, cannot be reduced to a finite-dimensional Markov process).
 This reduction is easily implied by the
identity
\begin{equation}\label{eq:mu-t+s}
\mu_{t+s}=\frac{t}{t+s} \mu_t + \frac{1}{t+s} \int_t^{t+s}
\delta_{X_u} \, \mathrm{d}u.
\end{equation}
Note that the second term in the right-hand side 
of~\eqref{eq:mu-t+s} can be written as $\frac{s}{t+s}
\mu_{[t,t+s]}$, where $\mu_{[t,t+s]}$ is the empirical
measure during the time interval~$[t,t+s]$:
$$
\mu_{[t_1,t_2]}:=\frac{1}{t_2-t_1} \int_{t_1}^{t_2} \delta_{X_u}
\, \mathrm{d}u.
$$

Now, passing $\mu_t$ to the left-hand side of~\eqref{eq:mu-t+s}, dividing
by~$s$ and passing to the limit as $s\to 0$, we obtain the
following SDE for the couple $(X_t, \mu_t)$:
\begin{equation}\label{eq:x-mu}
\left\{
\begin{array}{l}
\mathrm{d}X_t= \sqrt{2}\, \mathrm{d}B_t - \nabla W*\mu_t(X_t)\, \mathrm{d}t, \\
\dot{\mu}_t = \frac{1}{t} (-\mu_t+\delta_{X_t}).
\end{array}
\right.
\end{equation}

For any $t_0>0$, the local existence and uniqueness of solutions
to~\eqref{eq:x-mu}, in a neighbourhood of $t_0$, is implied by well-known arguments: see
Theorem~11.2 of~\cite{RW}.

However, in order to study the asymptotic behaviour of solutions
to~\eqref{eq:AABM2}, we should first show the global existence of
these solutions, in other words, that they do not explode in a
finite time. It will be done in~\S\ref{ss:c-estimates}.

Note also that the equation~\eqref{eq:x-mu} clearly has a
singularity at~$t=0$. To avoid this singularity, sometimes the
equation~\eqref{eq:x-mu} is considered with an initial condition
$(X_r,\mu_r)$ at some positive time $r>0$ (and thus for $t\in
[r,\infty)$). After the time-shift $s=t-r$, the
system~\eqref{eq:x-mu} transforms to
\begin{equation}\label{eq:x-mu-r}
\left\{
\begin{array}{l}
\mathrm{d}X_s= \sqrt{2}\, \mathrm{d}B_s - \nabla W*\mu_s(X_s)\, \mathrm{d}s, \\
\dot{\mu}_s = \frac{1}{s+r} (-\mu_s +\delta_{X_s}).
\end{array}
\right.
\end{equation}

In fact, we can restrict our consideration to such situations only
(as, anyway, we are interested in the asymptotic behaviour of
solutions at infinity), but it is interesting to show that the
equation~\eqref{eq:AABM2} has indeed existence and uniqueness of
solutions for any initial value problem~$X_0=x_0$. It is done in the appendix.

\subsubsection{Center-drift estimates}\label{ss:c-estimates}

A natural ``reference point'' that one can associate to a
measure~$\mu$ is the equilibrium point $c_\mu=c(\mu)$ of the potential
it generates with~$W$, defined by the equation $\nabla W*\mu(c_\mu)=0$
(see Definition~\ref{def:mu-c}, \S\ref{ss:Main}), that we refer to
as the \emph{center} of the measure~$\mu$. Also, it will be
convenient to consider the \emph{centered measure} $\mu^c$,
obtained from~$\mu$ by the translation that shifts the center to
the origin.

Note that the implicit function theorem allows to estimate
(on an interval of existence of solution $(X_t,\mu_t)$
to~\eqref{eq:x-mu}) the derivative $\dot{c}_t$ of~$c_t:=c_{\mu_t}$.
In particular, we will see that $c_t$ is a $C^1$-function on this
interval.

Indeed, the function $(x,t) \mapsto \nabla W*\mu_t(x)$ is $C^1$-smooth:
\begin{eqnarray*}
\mathrm{d}(\nabla W*\mu_t)(x) &=& \nabla^2 W*\mu_t(x)\, \mathrm{d}x +
\nabla W*\dot{\mu}_t(x) \, \mathrm{d}t \\
&=& \nabla^2 W*\mu_t(x) \, \mathrm{d}x +
\frac{1}{t} \nabla W*(-\mu_t+\delta_{X_t})(x) \, \mathrm{d}t,
\end{eqnarray*}
and for any $(x,t)$ we have $\nabla^2 W*\mu_t(x)\ge C_W I >0$. The implicit function theorem
thus implies that $c_t$ is a $C^1$-function of $t$ (on the interval of existence of solution), and that
\begin{eqnarray*}
\dot{c}_t &=& -\left(\ddv{x} \nabla W*\mu_t(x) \left.\right|_{x=c_t}\right)^{-1} \ddv{t} (\nabla W*\mu_t)(c_t) = -\frac{1}{t} \left(\nabla^2 W*\mu_t(c_t)\right)^{-1} \nabla W*\delta_{X_t}(c_t) \\
&=& \frac{1}{t} \left(\nabla^2 W*\mu_t(c_t)\right)^{-1} \nabla W(X_t-c_t).
\end{eqnarray*}

This implies that the projection of the center drift velocity on the line from $c_t$ to $X_t$ is directed towards $X_t$, as $\nabla W(X_t-c_t)$ is positive, proportional to $X_t-c_t$ and  $$\left( \left(\nabla^2 W*\mu_t(c_t)\right)^{-1} \nabla W(X_t-c_t), X_t-c_t \right)>0.$$ This also immediately gives an upper bound on the drift speed:
\begin{equation}\label{eq:center-speed}
|\dot{c}_t| \le \frac{1}{t} \cdot \frac{P(|X_t-c_t|)}{C_W}.
\end{equation}

\subsubsection{Law of $X$-center distances: Ornstein-Uhlenbeck estimate}\label{ss:OU}

To continue our study, first we would like to obtain an estimate
on the behaviour of the distance $|X_t-c_t|$. Namely, we are going
to compare it with the (absolute value of) Ornstein-Uhlenbeck
process, and to obtain exponential-decrease bounds on its
occupation measure in \S\ref{ss:exp-dec}.

\begin{proposition}\label{p:OU-comparison}
The process $(X_t)$ can be considered as the first element of the pair $(X_t,Z_t)$ of processes such that
\begin{enumerate}
 \item[i)] $|X_t-c_t|\le 2 + Z_t$,
 \item[ii)] $Z_t$ is the absolute value of a $3d$-dimensional Ornstein-Uhlenbeck process.
\end{enumerate}
\end{proposition}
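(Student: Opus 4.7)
My plan is to derive a one-sided stochastic differential inequality for the squared distance $R_t^2 := |X_t - c_t|^2$, and then couple this with the squared-norm SDE of a $3d$-dimensional Ornstein--Uhlenbeck process on the same probability space in such a way that a Yamada--Watanabe-type pathwise comparison applies.

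Setting $Y_t := X_t - c_t$ and $F_t(x) := \nabla W * \mu_t(x)$, I would start from $dY_t = \sqrt{2}\, dB_t - F_t(X_t)\, dt - \dot c_t\, dt$ and apply It\^o's formula to obtain
\begin{equation*}
dR_t^2 \;=\; 2\sqrt{2}\, \langle Y_t, dB_t\rangle \;+\; \bigl(2d - 2\langle Y_t,\, F_t(X_t) + \dot c_t\rangle\bigr)\, dt.
\end{equation*}
Two ingredients bound the drift from above by $(2d - 2C_W R_t^2)\, dt$: first, the uniform convexity of $W$ together with the center condition $F_t(c_t) = 0$ yield $\langle Y_t, F_t(X_t)\rangle \ge C_W R_t^2$ via integration of $\nabla F_t \ge C_W I$ along the segment $[c_t, X_t]$; second, the explicit formula for $\dot c_t$ from \S\ref{ss:c-estimates} shows that it is a non-negative multiple of $\nabla W(Y_t)$, hence collinear with $Y_t$ and pointing in the same direction, so $\langle Y_t, \dot c_t\rangle \ge 0$. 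The martingale part has quadratic variation $8 R_t^2\, dt$, so by L\'evy's representation it can be written as $2\sqrt{2}\, R_t\, dW_t$ for some $1$-dimensional standard Brownian motion $W_t$.

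On the same probability space I would construct a $3d$-dimensional OU process $U_t$ solving $dU_t = \sqrt{2}\, d\hat B_t - C_W U_t\, dt$, with the $3d$-dimensional driving noise $\hat B$ chosen so that its radial projection $\int_0^{\cdot} \langle U_s/|U_s|, d\hat B_s\rangle$ equals $W_t$, and with $|U_0|$ chosen so that $|U_0| + 2 \ge R_0$. Setting $Z_t := |U_t|$, a direct computation yields $dZ_t^2 = 2\sqrt{2}\, Z_t\, dW_t + (6d - 2C_W Z_t^2)\, dt$. Both $R_t^2$ and $Z_t^2$ are now semimartingales driven by the same Brownian motion with identical $\tfrac12$-H\"older diffusion coefficient $x \mapsto 2\sqrt{2}\sqrt{x}$, while the drift of $Z_t^2$ strictly dominates the upper bound on the drift of $R_t^2$ because $6d > 2d$; the Yamada--Watanabe pathwise comparison theorem then gives $R_t \le Z_t + 2$ for all $t \ge 0$.

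The main technical obstacle will be making the pathwise comparison rigorous, since $R_t^2$ satisfies only a differential inequality and $\sqrt{\cdot}$ is not Lipschitz at the origin. The cleanest route is Tanaka's formula applied to $(R_t - Z_t - 2)_+$, combined with the Yamada--Watanabe lemma asserting that semimartingales sharing a $\tfrac12$-H\"older diffusion coefficient accumulate no local time at~$0$. A parallel and perhaps more transparent route is to couple the radial SDEs for $R_t$ and $Z_t$ through the same $W_t$: the difference $R_t - Z_t$ then has no martingale part, and its drift $\tfrac{d-1}{R_t} - \tfrac{3d-1}{Z_t} - C_W(R_t - Z_t)$ is strictly negative on $\{R_t > Z_t > 0\}$, which rules out any upcrossing of the level $2$. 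The Bessel-type singularity at $R_t = 0$ is harmless: $|Y_t|$ never vanishes for $d \ge 2$, while for $d = 1$ its zero set has Lebesgue measure zero and does not contribute to the integrated drift.
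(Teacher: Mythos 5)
Your approach is sound, but it is a genuinely different route from the paper's, so let me compare. The paper works directly with the radial process $R_t := |X_t - c_t|$, which has a Bessel drift term $\tfrac{d-1}{R_t}$ and a singular angular decomposition at the origin; to avoid both problems, the authors drive $Z_t$ by a Brownian motion $\gamma_t$ that is an $\alpha$-interpolation between the radial projection of $B$ (active when $R_t \ge 1$) and an independent noise (active when $R_t < 1$), and the slack ``$+2$'' in $R_t \le 2 + Z_t$ absorbs the region where the noises are uncoupled. You instead pass to $R_t^2$, whose It\^o decomposition has a \emph{continuous} diffusion coefficient $x \mapsto 2\sqrt{2}\sqrt{x}$ and a drift with no Bessel singularity, so the interpolation device becomes unnecessary: you can use the same driving one-dimensional noise $W_t$ throughout, invoke a Tanaka/Yamada--Watanabe comparison, and in fact obtain the strictly stronger inequality $R_t \le Z_t$. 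Your ``parallel route'' (coupling $R_t$ and $Z_t$ directly through $W_t$ and observing that $R_t - Z_t$ is of bounded variation with drift strictly negative on $\{R_t > Z_t\}$) is essentially the same comparison-by-cancellation-of-martingales idea in undisguised form; both survive the $d=1$ case because the local time of $R_t$ at $0$ only accrues where $R_t = 0 < Z_t$, so it never enters the Tanaka formula for $(R_t - Z_t)^+$. The paper's approach buys a self-contained scalar construction (it never needs the ambient $3d$-dimensional process $U_t$); yours buys the elimination of the cut-off $\alpha$ and the extra ``$+2$'' entirely.

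Three small repairs are needed. First, $\dot c_t = \tfrac1t (\nabla^2 W * \mu_t(c_t))^{-1} \nabla W(Y_t)$ is \emph{not} collinear with $Y_t$: it is a positive-definite matrix applied to a nonnegative multiple of $Y_t$. The conclusion you actually use, $\langle Y_t, \dot c_t\rangle \ge 0$, is still correct (it is exactly the positivity $\langle Y_t, A^{-1}Y_t\rangle > 0$ noted in \S\ref{ss:c-estimates}), so only the intermediate wording is off. Second, for your squared-distance comparison $R_t^2 \le Z_t^2$ you must start from $Z_0 \ge R_0$, not merely $Z_0 + 2 \ge R_0$; since the proposition leaves the initial value of $Z$ free, simply choose it so. Third, the construction of $\hat B$ as stated is circular (it presupposes $U_t$); either build $U_t$ by the standard skew-product decomposition (solve the closed scalar SDE for $Z_t$ from $W_t$, then attach independent angular noise), or, more economically, do exactly what the paper does --- define $Z_t$ directly as the solution of the scalar SDE $\mathrm{d}Z_t = \sqrt2\,\mathrm{d}W_t - (C_W Z_t - \tfrac{3d-1}{Z_t})\,\mathrm{d}t$ and note it is distributed as the absolute value of a $3d$-dimensional Ornstein--Uhlenbeck process, which is all the proposition requires.
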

\begin{proof}
From $$
\left\{
\begin{array}{l}
\mathrm{d}X_t = \sqrt{2}\mathrm{d}B_t - \nabla W*\mu_t(X_t)\, \mathrm{d}t\\
\, \dot{c}_t = \frac{1}{t} \left(\nabla^2 W*\mu_t(c_t)\right)^{-1} \nabla W(X_t-c_t)
\end{array}
\right. $$
one obtains that the difference $|X_t-c_t|$, while it is non-zero, satisfies the SDE 
\begin{eqnarray*}
\mathrm{d}|X_t-c_t| &=& \sqrt{2}\left(\frac{X_t-c_t}{|X_t-c_t|}, \mathrm{d}B_t\right) + \frac{d-1}{|X_t-c_t|} \mathrm{d}t\\ 
&-& \left(\frac{X_t-c_t}{|X_t-c_t|}, \nabla W*\mu_t(X_t) + \frac{1}{t}(\nabla^2 W*\mu_t(c_t))^{-1}\nabla W(X_t-c_t) \right) \, \mathrm{d}t.
\end{eqnarray*}
In the same way, the desired $Z_t$ shall satisfy the equation 
\begin{equation}\label{eq:Z}
\mathrm{d}Z_t = \sqrt{2}\mathrm{d}\gamma_t - \left(\frac{C_W}{2}Z_t - \frac{3d-1}{Z_t}\right)\mathrm{d}t,
\end{equation}
where $\gamma$ is also a Brownian motion. 
So, take a one-dimensional standard Brownian motion $\beta$ independent of the Brownian motion $B$ and let $\gamma$ be defined as
\begin{equation}\label{eq:true-Z}
\mathrm{d}\gamma_t = \alpha(|X_t-c_t|) \left( \frac{X_t-c_t}{|X_t-c_t|}, \mathrm{d}B_t\right) + \sqrt{1-\alpha^2(|X_t-c_t|)}\mathrm{d}\beta_t ,
\end{equation}
 where $\alpha: [0,+\infty) \rightarrow [0,1]$ is a $C^\infty$-function which is identically zero in some neighbourhood of 0 and $\alpha(r)=1$ for any $r\ge 1$. The process $Z$ is then defined by~\eqref{eq:Z}.
 
We point out that, as $B$ and $\beta$ are independent, $B$ is a $d$-dimensional Brownian motion while $\beta$ is 1-dimensional. It follows that $Z$ defined by~\eqref{eq:Z} is the absolute value of a $3d$-dimensional Ornstein-Uhlenbeck process.

On the other hand, for any $t$, either $|X_t-c_t|\le 1$ and then automatically $|X_t -c_t|\le 2+Z_t$, or $|X_t-c_t|\ge 1$ and then both $|X_t-c_t|$ and $Z_t$ share exactly the same Brownian component (as $\alpha \equiv 1$), with the inequality between the drift terms of $2+Z_t$ and $|X_t-c_t|$: 
\begin{multline}
-\frac{C_W}{2}Z_t + \frac{3d-1}{Z_t} \ge  -C_W |X_t-c_t| +\frac{d-1}{|X_t-c_t|}\ge\\ 
\ge  - \left(\nabla W*\mu_t\left|_{X_t}\right., \frac{X_t-c_t}{|X_t-c_t|}\right) +\frac{d-1}{|X_t-c_t|}-\\
- \left(\frac{1}{t}(\nabla^2 W*\mu_t(c_t))^{-1}\nabla W(X_t-c_t), \frac{X_t-c_t}{|X_t-c_t|}\right),
\end{multline}
 as soon as $|X_t-c_t|\ge \frac{d-1}{3d-1}Z_t$. A comparison theorem concludes the proof.
\end{proof}

\subsubsection{Global existence}\label{ss:global-exist}

\begin{proposition}[global existence]\label{l:MBAA-global}
For any $r>0$ and for any initial condition $(X_r,\mu_r)$, the
solution to~\eqref{eq:x-mu} exists (and is unique) on the whole
interval~$[r,+\infty)$.
\end{proposition}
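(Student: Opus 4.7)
The plan is to rule out finite-time explosion by contradiction, combining the Ornstein--Uhlenbeck comparison of Proposition~\ref{p:OU-comparison} with the center-drift estimate~\eqref{eq:center-speed}: together they give a pathwise \emph{a priori} bound on $|X_t|$ up to any alleged explosion time, which forces the solution to be continuable. Uniqueness will then follow from the local uniqueness already noted in~\S\ref{ss:Markov} by the usual gluing argument.

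More precisely, let $T^{\max}\in (r,+\infty]$ denote the maximal time of existence of the solution $(X_t,\mu_t)$ starting from $(X_r,\mu_r)$, and suppose, for contradiction, that $T^{\max}<+\infty$. On the whole interval $[r,T^{\max})$, Proposition~\ref{p:OU-comparison} provides the pathwise bound $|X_t-c_t|\le 2+Z_t$, where $Z_t$ solves the Ornstein--Uhlenbeck-type SDE~\eqref{eq:Z}. Since the drift in~\eqref{eq:Z} is mean-reverting at $+\infty$ and repelling near $0$ and the diffusion coefficient is constant, $Z$ is almost surely continuous and non-exploding on $[r,+\infty)$; in particular
$$M(\omega):=\sup_{t\in[r,T^{\max}]}Z_t<+\infty\quad\text{almost surely}.$$
Plugging this into~\eqref{eq:center-speed} and using that $P$ can be taken increasing, we get
$$|\dot c_t|\le \frac{P(|X_t-c_t|)}{C_W\,t}\le \frac{P(2+M(\omega))}{C_W\,r}\qquad\text{for all }t\in[r,T^{\max}),$$
so $c_t$ is uniformly Lipschitz on $[r,T^{\max})$, hence has a finite limit $c_{T^{\max}}\in\Rn$; combined with $|X_t|\le |c_t|+2+Z_t$, this confines $X_t$ to a (random) bounded set on $[r,T^{\max})$.

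In particular the drift $\nabla W*\mu_t(X_t)$ appearing in~\eqref{eq:x-mu-r} is pathwise bounded on $[r,T^{\max})$ (because $\mu_t$ is a probability measure supported in a bounded set and $\nabla W$ has at most polynomial growth), so $X_t$ extends continuously to $t=T^{\max}$; applying the local existence/uniqueness result to the initial data $(X_{T^{\max}},\mu_{T^{\max}})$ then extends the solution strictly past $T^{\max}$, contradicting the maximality of $T^{\max}$. Hence $T^{\max}=+\infty$, and uniqueness on $[r,+\infty)$ follows from local uniqueness by the same gluing argument. The main delicate point is verifying that $Z$ is pathwise bounded on the closed time $[r,T^{\max}]$: the construction in Proposition~\ref{p:OU-comparison} defines $Z$ only on the existence interval of $(X,c)$, but because the SDE~\eqref{eq:Z} has coefficients depending only on $Z$ itself (with $\gamma$ genuinely Brownian by construction), $Z$ is a bona fide Ornstein--Uhlenbeck-type process on $[r,+\infty)$ and its non-explosion is classical.
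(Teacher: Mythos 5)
Your argument is correct and follows essentially the same route as the paper: non-explosion is obtained by combining the Ornstein--Uhlenbeck comparison of Proposition~\ref{p:OU-comparison} with the center-drift bound~\eqref{eq:center-speed} to produce a pathwise a priori bound on $|X_t|$ up to any finite time, and global existence/uniqueness then follows from the local statement. The paper phrases this via the stopping times $\tau_n$ rather than a contradiction at $T^{\max}$, but the content is the same.
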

\begin{proof}
As we already have the local existence and uniqueness, it suffices
to check that the solution $X_t$ cannot explode in a finite time
(this impossibility will imply that the measures $\mu_t$, as the
occupation measures of $X_t$, also stay in a compact domain for
any bounded interval of time). 

Let us introduce the increasing sequence of stopping times $\tau_0 = 0$ and $$\tau_n :=
\inf\left\{t\geq \tau_{n-1}: \vert X_t\vert > n\right\}.$$
In order to show that the solution never explodes, we use the comparison of $X_t-c_t$ with the Ornstein-Uhlenbeck process $Z_t$ (see \S\ref{ss:OU}). So, we have for the corresponding $Z$, that $$\vert X_{\min (t, \tau_n)} -c_{\min (t, \tau_n)}\vert \leq 2+Z_{\min (t, \tau_n)}.$$ As $Z$ does not explode in a finite time, letting $n$ go to infinity, we conclude that $X_t-c_t$ does not explode in a finite time. To conclude, one has to use the inequality \eqref{eq:center-speed}: $$\vert \dot{c}_{t} \vert \le \frac{1}{t}\frac{P(\vert X_t -c_t \vert)}{C_W} \le \frac{1}{t}\frac{P(2+Z_t)}{C_W} \le \frac{1}{t}\frac{P(2)}{C_W}P(Z_t).$$ 
Any trajectory of $Z$ being bounded on any finite interval of time, the integral $\int_r^t \frac{P( Z_s )}{s} \mathrm{d}s$ is finite for any $t\ge r$. So, the process $(X_t,t\geq 0)$ does not explode in a finite time and there exists a global strong solution.
\end{proof}

\subsection{Exponential tails estimates}\label{ss:measures}

\subsubsection{Estimates for the centered empirical measure}\label{ss:exp-dec}
We shall now estimate the behaviour of the centered measures $\mu_t^c$. Namely, we are going to prove that these measures are exponentially decreasing. For shortness and simplicity, we introduce the following
\begin{definition}\label{eq:K-alpha-C}
Let $\alpha,C>0$ be given. Then
\begin{eqnarray}
K_{\alpha,C}^0 &:=& \{\mu \in \mcP(\Rn); \quad \forall r, \, \mu(\{y; |y|>r\})<Ce^{-\alpha r}\},\\
K_{\alpha,C} &:=& \{\mu \in \mcP(\Rn); \quad \mu^c \in K_{\alpha,C}^0 \}.
\end{eqnarray}
Also, for non-probability positive definite measures, we denote the spaces defined by the same inequalities by $\tilde{K}_{\alpha,C}^0$ and $\tilde{K}_{\alpha,C}$.
\end{definition}

For what follows, we need one easy lemma.
\begin{lemma}\label{l:OU_maj}
Let $Z$ be the absolute value of a $3d$-dimensional Ornstein-Uhlenbeck process. Then, there exists $C_1>0$, such that for almost any trajectory $Z_t$,
one has almost surely
$$\exists T: \forall t\ge T,\, \forall r>0 \quad \frac{1}{t}
\left| \{s\le t: Z_s>r \} \right|  < C_1 e^{-r}.$$
\end{lemma}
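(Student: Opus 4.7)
The plan is to combine the ergodic theorem for the OU process with a trivial Chebyshev/Markov-type inequality that provides uniformity in $r$ automatically. The essential observation is that $\mathbf{1}_{\{z>r\}} \le e^{z-r}$ holds pointwise for every $r>0$, so as soon as we control $\frac{1}{t}\int_0^t e^{Z_s}\,\mathrm{d}s$ for large $t$, the desired uniform exponential bound for the occupation measure follows by integrating this pointwise inequality and pulling out $e^{-r}$.

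First I would identify the invariant measure $\pi$ of the underlying $3d$-dimensional OU process, which is the centered Gaussian $\mathcal{N}(0,(2/C_W)I_{3d})$, and set
$$
M_0 := \int_{\bbR^{3d}} e^{|y|}\,\pi(\mathrm{d}y).
$$
Since $\pi$ has Gaussian density $\propto e^{-C_W|y|^2/4}$, the Gaussian decay beats the exponential growth of $e^{|y|}$, and $M_0<+\infty$. Next, using that the $3d$-dimensional OU process is positive Harris recurrent with unique invariant measure $\pi$, the ergodic theorem for Markov processes (applied to the $\pi$-integrable function $y\mapsto e^{|y|}$, see e.g.\ Meyn--Tweedie) yields, for almost every trajectory and regardless of the initial condition,
$$
\frac{1}{t} \int_0^t e^{Z_s}\,\mathrm{d}s \xrightarrow[t\to\infty]{} M_0.
$$
In particular there exists a (trajectory-dependent) time $T$ such that $\frac{1}{t}\int_0^t e^{Z_s}\,\mathrm{d}s\le 2M_0$ for all $t\ge T$.

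Finally, for any $r>0$ and any $s\ge 0$ one has $\mathbf{1}_{\{Z_s>r\}} \le e^{Z_s-r}$; integrating over $s\in[0,t]$ and dividing by $t$ gives
$$
\frac{1}{t}\,\bigl|\{s\le t : Z_s > r\}\bigr| \;\le\; e^{-r} \cdot \frac{1}{t}\int_0^t e^{Z_s}\,\mathrm{d}s,
$$
so that for $t\ge T$ the right-hand side is bounded by $2M_0\, e^{-r}$, uniformly in $r>0$. Setting $C_1 := 2M_0$ gives the announced statement. I do not anticipate a serious obstacle here: the integrability of $e^{|y|}$ under the Gaussian $\pi$ and the standard ergodic theorem for the OU process are both classical, and the uniformity in $r$ is built into the exponential bound $\mathbf{1}_{\{z>r\}}\le e^{z-r}$, so there is no need for a delicate uniform ergodic statement over the parameter $r$.
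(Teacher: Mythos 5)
Your proof is correct and follows essentially the same route as the paper: identify the Gaussian invariant measure, note that $e^{|y|}$ is integrable under it, apply the ergodic theorem to get an a.s.\ eventual bound on $\frac{1}{t}\int_0^t e^{Z_s}\,\mathrm{d}s$, and then conclude by the Markov/Chebyshev-type inequality $\mathbf{1}_{\{z>r\}}\le e^{z-r}$. The only cosmetic difference is the constant ($2M_0$ versus $I+1$) and that you make the underlying pointwise inequality explicit rather than invoking Chebyshev by name.
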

\begin{proof}
Note that the Ornstein-Uhlenbeck process is ergodic, with stationary measure $\gamma_{OU} = e^{-C_W |x|^2 /2}$. The function $f(x) = e^{|x|}$ is $\gamma_{OU}$-integrable. Hence, by (Birkhoff) ergodic theorem, almost surely $$\frac{1}{t}\int_0^t f(Z_s) \mathrm{d}s \rightarrow \int f(x) \mathrm{d}\gamma_{OU}(x) =: I.$$ Thus for all $t$ large enough, $\frac{1}{t}\int_0^t e^{|Z_s|} \mathrm{d}s \le I+1$. Applying Chebychev's inequality, we see that for all $r>0$, $$\frac{1}{t} \left| \{s\le t: Z_s>r \} \right| < (I+1) e^{- r}.\qedhere$$
\end{proof}

The main result of this subsection is the following, showing that the measure $\mu_t$ belongs to the set $K_{\alpha,C}$.
\begin{proposition}\label{p:expo-decrease}
There exist two constants $\alpha,C >0$ such that a.s. at any
sufficiently large time $t$, $\mu_t\in K_{\alpha,C}$.
\end{proposition}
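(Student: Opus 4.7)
}

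The plan is to rewrite $\mu_t^c(\{|y|>r\})$ as a normalized Lebesgue measure of the set of times $s\in[0,t]$ where $|X_s-c_t|>r$, and to bound this set by splitting it according to whether $c_s$ is close to $c_t$ or not. Concretely, I would use the triangle inequality $|X_s-c_t|\le |X_s-c_s|+|c_s-c_t|$, note that Proposition~\ref{p:OU-comparison} gives $|X_s-c_s|\le 2+Z_s$, and split the interval of times $[0,t]$ into the ``stable center'' part $G_r:=\{s\le t:|c_s-c_t|\le r/2\}$ and its complement $B_r:=[0,t]\setminus G_r$.

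On $G_r$, the inequality $|X_s-c_t|>r$ forces $Z_s>r/2-2$, so by Lemma~\ref{l:OU_maj} the contribution to $\mu_t^c(\{|y|>r\})$ from $G_r$ is at most $C_1 e^{-(r/2-2)}$ once $t$ is large enough. The heart of the argument is therefore the estimate on $|B_r|/t$. Here I would observe that $u\mapsto c_u$ is $C^1$, so for $s\le u\le t$ one has $|c_u-c_t|\le\int_s^t|\dot c_v|\,\mathrm{d}v$; in particular, letting $\tau(r)$ be the largest $s\le t$ for which $\int_s^t|\dot c_v|\,\mathrm{d}v\le r/2$, we have $B_r\subseteq[0,\tau(r)]$.

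To control $\tau(r)$ I would plug in the center-speed bound~\eqref{eq:center-speed}, $|\dot c_v|\le P(2+Z_v)/(C_Wv)$, and invoke the ergodicity of the Ornstein--Uhlenbeck process $Z$: since $P(2+|\cdot|)$ is integrable against the Gaussian stationary measure $\gamma_{OU}$, the Birkhoff theorem implies that $F(v):=\int_0^v P(2+Z_w)\,\mathrm{d}w$ satisfies $F(v)/v\to I'$ a.s., so $F(v)\le 2I'v$ for all $v$ large enough. An integration by parts then yields
\begin{equation*}
\int_{\tau(r)}^t\frac{P(2+Z_v)}{C_W v}\,\mathrm{d}v\;=\;\frac{1}{C_W}\left(\frac{F(t)}{t}-\frac{F(\tau(r))}{\tau(r)}+\int_{\tau(r)}^t\frac{F(v)}{v^2}\,\mathrm{d}v\right)\;\le\;A+B\log\!\frac{t}{\tau(r)},
\end{equation*}
for deterministic constants $A,B>0$ (holding a.s.\ for $t$ large). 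Setting the left-hand side equal to $r/2$ at $s=\tau(r)$ gives $\tau(r)/t\le e^{-(r/2-A)/B}$, i.e.\ an exponential-in-$r$ bound on $|B_r|/t$.

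Summing the two contributions produces $\mu_t^c(\{|y|>r\})\le Ce^{-\alpha r}$ with $\alpha:=\min(1/2,1/(2B))$ and a universal $C>0$, valid almost surely for every $r>0$ and all sufficiently large $t$. The main obstacle is the bound on $|B_r|/t$: one has to make sure the logarithmic blow-up of $\int_{s}^{t}|\dot c_v|\,\mathrm{d}v$ is only logarithmic (not worse), which is exactly what the ergodic theorem together with the polynomial control~\eqref{eq:domination} delivers through the integration-by-parts trick above.
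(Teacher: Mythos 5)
Your proof is correct in substance and takes a genuinely different route from the paper's. The paper decomposes the \emph{measure} dyadically, writing $\mu_{2^n t}=\tfrac{1}{2^n}\mu_t+\sum_j \tfrac{1}{2^j}\mu_{[2^{j-1}t,2^jt]}$, invoking Lemma~\ref{l:K-alpha0} for each piece and then Lemma~\ref{l:sep-measure} as an induction tool to propagate the exponential-tail property through repeated mixing while the center drifts by $C_3$ per doubling. You instead decompose the \emph{time interval}: for each tail level $r$ you split $[0,t]$ into the ``stable'' times $G_r=\{s:|c_s-c_t|\le r/2\}$ (handled directly by the Ornstein--Uhlenbeck occupation estimate of Lemma~\ref{l:OU_maj}) and the ``unstable'' times $B_r$ (handled by showing the center can move $r/2$ only if $s/t$ is exponentially small, via the logarithmic growth of $\int|\dot c|$). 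Both proofs rest on the same two ingredients --- the OU comparison and the center-speed bound~\eqref{eq:center-speed} plus Birkhoff for $P(Z)$ --- and your route is cleaner for the present proposition because it sidesteps Lemma~\ref{l:sep-measure} entirely; the paper, however, needs Lemma~\ref{l:sep-measure} anyway for Lemma~\ref{l:K-alpha-C}, so its proof is more economical across the whole argument.

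Two small points you should tighten. First, ``the largest $s\le t$ for which $\int_s^t|\dot c_v|\,dv\le r/2$'' is $t$ itself and makes $B_r\subseteq[0,\tau(r)]$ vacuous; you mean the \emph{smallest} such $s$ (equivalently, the unique $\tau(r)$ with $\int_{\tau(r)}^t|\dot c_v|\,dv=r/2$, or $0$ if the total integral is below $r/2$). Second, the Birkhoff bound $F(v)/v\le 2I'$ only holds beyond some random time $v_0$, so your integration-by-parts bound $\int_{\tau(r)}^t\frac{P(2+Z_v)}{C_Wv}\,dv\le A+B\log(t/\tau(r))$ requires $\tau(r)\ge v_0$, and the contribution of times $s\in[0,v_0]$ must be handled separately. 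This residual piece is at most $v_0/t$, which must be absorbed into $Ce^{-\alpha r}$ across all $r$; for the relevant range this forces $\alpha$ to be \emph{strictly} smaller than $1/(2B)$ (with $\alpha=1/(2B)$ the constant $C$ would have to depend on the random $M_0=\int_0^{v_0}|\dot c_v|\,dv$). Replacing your choice by, say, $\alpha:=\min(1/2,\,1/(3B))$ fixes this. These are bookkeeping issues, not conceptual gaps: the paper has the exact analogue (the condition $e^{\alpha C_3}<2$ together with ``$n$ big enough'' to absorb the random radius $R$ of the compactly supported seed $\mu_t^c$, $t\in[t_1,2t_1]$).
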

To prove this proposition, we need two intermediate lemmas, which proofs are postponed.
\begin{lemma}\label{l:K-alpha0}
There exist $\alpha_0,C_0>0$ such that a.s. for any sufficiently large time $t$, $\mu_{[t/2,t]}(\cdot + c_{t/2}) \in K_{\alpha_0,C_0}^0$.
\end{lemma}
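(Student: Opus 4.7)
The plan is to exploit the Ornstein--Uhlenbeck comparison from Proposition~\ref{p:OU-comparison} and the center-drift estimate~\eqref{eq:center-speed}, converting both into statements about the empirical occupation measure on $[t/2,t]$.

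First I would rewrite the quantity of interest: for every $r>0$,
$$
\mu_{[t/2,t]}(\{y:|y|>r\} + c_{t/2})
= \frac{2}{t}\,\bigl|\{s\in[t/2,t]\colon |X_s-c_{t/2}|>r\}\bigr|,
$$
so that the task reduces to an upper bound on the Lebesgue measure of the set of times $s\in[t/2,t]$ at which $|X_s-c_{t/2}|$ is large. Writing
$$
|X_s-c_{t/2}| \le |X_s-c_s| + |c_s-c_{t/2}|,
$$
Proposition~\ref{p:OU-comparison} handles the first term by $2+Z_s$.

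The key intermediate step is to control the second term by a \emph{deterministic} constant $K$ for all $s\in[t/2,t]$ and all sufficiently large $t$. From~\eqref{eq:center-speed},
$$
|c_s-c_{t/2}|
\le \int_{t/2}^{s} \frac{P(|X_u-c_u|)}{C_W\,u}\,\mathrm{d}u
\le \frac{2\,P(2)}{C_W\,t}\int_{t/2}^{t} P(Z_u)\,\mathrm{d}u,
$$
using the submultiplicativity of $P$ and Proposition~\ref{p:OU-comparison}. Since $Z$ is the modulus of a $3d$-dimensional Ornstein--Uhlenbeck process, whose stationary measure is Gaussian, $P(Z)$ is integrable against the invariant measure and the Birkhoff ergodic theorem gives that $\frac{1}{t}\int_0^t P(Z_u)\,\mathrm{d}u$ converges a.s.\ to a finite constant. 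In particular $\frac{1}{t}\int_{t/2}^t P(Z_u)\,\mathrm{d}u$ is a.s.\ bounded by some $M$ for $t$ large, so one may set $K:=2P(2)M/C_W$ and obtain $|c_s-c_{t/2}|\le K$ uniformly in $s\in[t/2,t]$.

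Combining these bounds, on the a.s.\ event where both apply, $\{s\in[t/2,t]\colon |X_s-c_{t/2}|>r\}$ is contained in $\{s\in[t/2,t]\colon Z_s>r-2-K\}$. I would then invoke Lemma~\ref{l:OU_maj}, which for large $t$ bounds
$$
\frac{1}{t}\bigl|\{s\le t\colon Z_s>r'\}\bigr|<C_1 e^{-r'} \quad\text{for every } r'>0.
$$
Applied with $r'=r-2-K$ (for $r>2+K$; the small-$r$ range is absorbed into the constant since $\mu_{[t/2,t]}\le 1$), this yields
$$
\mu_{[t/2,t]}(\{y:|y|>r\}+c_{t/2})
\le 2C_1 e^{2+K}\,e^{-r},
$$
and the constants $\alpha_0=1$ and $C_0=2C_1 e^{2+K}$ work.

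The main obstacle is the uniform-in-$s$ control of $|c_s-c_{t/2}|$: without the ergodic averaging of $P(Z_u)$ on $[t/2,t]$, the polynomial growth of $P$ could prevent the exponential tail from surviving the translation from $c_s$ to $c_{t/2}$. The rest is bookkeeping once this deterministic bound $K$ is in hand.
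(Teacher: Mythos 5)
Your proof follows the paper's approach exactly: rewrite the tail probability as a normalized occupation time, split $|X_s-c_{t/2}|\le|X_s-c_s|+|c_s-c_{t/2}|$, bound the first term by $2+Z_s$ via Proposition~\ref{p:OU-comparison}, bound the second by a deterministic constant via the Birkhoff ergodic theorem applied to $P(Z)$ along~\eqref{eq:center-speed}, and conclude with Lemma~\ref{l:OU_maj}. The only difference is cosmetic: the paper reuses the constant $C_3$ already established inside the proof of Proposition~\ref{p:expo-decrease}, whereas you rederive it as $K$; your version is also slightly more careful in carrying the additive $+2$ from the OU comparison through to the final constant.
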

\begin{lemma}\label{l:sep-measure}
Let $\alpha_0,C_0>0$ be fixed. Then there exist $\alpha,C,C'$ such that the following holds. Assume that there are given a measure $\mu \in \mcP(\Rn;P)$, a measure $\nu(\cdot + c_\mu) \in K_{\alpha_0,C_0}^0$ and a coefficient $0<\lambda <1/2$. Then if $\mu$ can be decomposed as $\mu = \mu^{(1)} +\mu^{(2)}$ with $\mu^{(2)}(\cdot +c_\mu) \in \tilde{K}_{\alpha,C}^0$, then for the decomposition $(1-\lambda)\mu + \lambda \nu(\cdot + c_{\tilde{\mu}}) = (1-\lambda) \mu^{(1)} + \left( (1-\lambda) \mu^{(2)} + \lambda \nu(\cdot + c_{\tilde{\mu}})\right)$, one also has $\left( (1-\lambda) \mu^{(2)} + \lambda \nu(\cdot + c_{\tilde{\mu}})\right) \in \tilde{K}_{\alpha,C}^0$. 
\end{lemma}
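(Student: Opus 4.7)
The plan is to work in translated coordinates where $c_\mu=0$, first establish an a priori bound on the center displacement $|c_{\tilde\mu}-c_\mu|\le C'$ with $C'$ depending only on the ``input'' constants $(\alpha_0,C_0,P,C_W)$, and then directly combine the exponential tail of $\mu^{(2)}$ with that of the shifted $\nu$. I would fix $\alpha\le\alpha_0$ at the outset (so the decay rate of $\nu$ dominates that of the target class), reserve $C'$ for the center displacement, and choose $C$ at the very end large enough to absorb all prefactors.

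The main step is the center-shift bound. Since $c_{\tilde\mu}$ is characterized by $\nabla W*\tilde\mu(c_{\tilde\mu})=0$ with $\tilde\mu=(1-\lambda)\mu+\lambda\nu(\cdot+c_{\tilde\mu})$, and since uniform convexity $\nabla^2 W\ge C_W I$ together with $\nabla W*\mu(c_\mu)=0$ gives the monotonicity estimate $|\nabla W*\mu(c_{\tilde\mu})|\ge C_W|c_{\tilde\mu}-c_\mu|$, the defining equation rearranges to
$$
(1-\lambda)C_W|c_{\tilde\mu}-c_\mu|\le\lambda\,\bigl|(\nabla W*\nu^c)(c_{\tilde\mu})\bigr|,
$$
where $\nu^c=\nu(\cdot+c_\mu)\in K^0_{\alpha_0,C_0}$. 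I would then bound the right-hand side using $|\nabla W|\le P$, the multiplicative property of $P$, and the finiteness of $\int P(|w|)\,\nu^c(dw)$ coming from the exponential tail of $\nu^c$; combined with $\lambda<1/2$, this yields a polynomial inequality in $|c_{\tilde\mu}-c_\mu|$ whose leading linear term controls the growth, producing the a priori bound $|c_{\tilde\mu}-c_\mu|\le C'$ that depends only on $(\alpha_0,C_0,P,C_W)$.

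With $C'$ in hand, translating to the coordinates where $c_\mu=0$, for any $r>C'$ I would bound
\begin{align*}
\bigl((1-\lambda)\mu^{(2)}+\lambda\nu(\cdot+c_{\tilde\mu})\bigr)(\{|y|>r\})
&\le(1-\lambda)Ce^{-\alpha r}+\lambda C_0\,e^{\alpha_0 C'}\,e^{-\alpha_0 r}\\
&\le\tfrac12\bigl(C+C_0 e^{\alpha_0 C'}\bigr)e^{-\alpha r},
\end{align*}
using the hypothesis on $\mu^{(2)}$ for the first term, and the inclusion $\{|z-(c_\mu-c_{\tilde\mu})|>r\}\subset\{|z-c_\mu|>r-C'\}$ combined with the tail of $\nu$ around $c_\mu$ for the second (plus $\alpha\le\alpha_0$ and $\lambda<1/2$). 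Choosing $C\ge C_0 e^{\alpha_0 C'}$ (and enlarging once more by the factor $e^{\alpha C'}$ to handle the trivial range $r\le C'$, where the total mass is at most one) delivers the required bound $Ce^{-\alpha r}$. The main obstacle is the first step: the bound $C'$ must be established with \emph{no} dependence on the target parameters $(\alpha,C)$, nor on the specific piece $\mu^{(1)}$, so that the closing choice of $C$ can be made \emph{after} $C'$ has been fixed --- this is precisely why the lemma produces three constants $(\alpha,C,C')$ simultaneously.
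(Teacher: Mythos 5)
Your argument diverges from the paper's at the first step, and the divergence is fatal. The paper proves the bound $|c_{\tilde\mu}-c_\mu|\le (C'/C_W)\,\lambda$, \emph{linear in $\lambda$}, not merely a $\lambda$-independent constant $C'$. This linearity is essential for the tail estimate. The tail of $(1-\lambda)\mu^{(2)}$ has to be evaluated around the \emph{new} center $c_{\tilde\mu}$ (this is what the application in Proposition~\ref{p:expo-decrease} requires, and it is what the paper's own proof computes), so the first term should be $(1-\lambda)Ce^{\alpha D}e^{-\alpha r}$ with $D=|c_{\tilde\mu}-c_\mu|$, not $(1-\lambda)Ce^{-\alpha r}$ as you wrote. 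For the self-referential bound (same $C$ in hypothesis and conclusion) to close, one needs $(1-\lambda)e^{\alpha D}\le 1$. With $D\le C''\lambda$ this holds as soon as $\alpha C''<1$, since then $(1-\lambda)e^{\alpha C''\lambda}\le e^{\lambda(\alpha C''-1)}\le 1$; but with only $D\le C'$ a constant, $(1-\lambda)e^{\alpha C'}\to e^{\alpha C'}>1$ as $\lambda\to 0$, regardless of how small $\alpha>0$ is fixed. So your tail estimate cannot be made to close.

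There is a second, independent problem: the inequality you propose for the center shift, $(1-\lambda)C_W D\le\lambda\,|(\nabla W*\nu^c)(c_{\tilde\mu})|\le\lambda P(D)\|\nu^c\|_P$, does not in fact yield a bound $D\le C'$. Since $P$ has degree $k\ge 2$, the right-hand side eventually outgrows the left, so the inequality is satisfied on an unbounded range of $D$; for $\lambda$ near $1/2$ it reads $C_W D\le P(D)\|\nu^c\|_P$, which typically holds for \emph{all} $D\ge 0$ and therefore constrains nothing. The paper's trick avoids both difficulties at once: evaluate $\nabla W*\tilde\mu$ at $c_\mu$ rather than at $c_{\tilde\mu}$. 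Since $\nabla W*\mu(c_\mu)=0$, this gives $|\nabla W*\tilde\mu(c_\mu)|=\lambda|\nabla W*\nu(c_\mu)|\le\lambda\|\nu(\cdot+c_\mu)\|_P$ directly, with no $P(D)$ factor because the evaluation point is fixed; uniform convexity of $W*\tilde\mu$ along the segment joining $c_\mu$ to $c_{\tilde\mu}$ then yields the crucial linear bound $D\le C'\lambda/C_W$.
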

In other words, this lemma provides an ``induction step" for showing that ``a big part of the centered measure has exponentially small tails" for a procedure of repetitive mixing with measure with exponential tails (this not being obvious, as the center can be shifted by such a procedure).

\begin{proof}[Proof of Proposition~\ref{p:expo-decrease}]
First, let us estimate the drift of the center. Namely, taking together~\eqref{eq:center-speed} and Proposition~\ref{p:OU-comparison},
we have
$$
|\dot{c}_t| \le \frac{1}{tC_W} P(|X_t-c_t|)\le \frac{1}{tC_W} P(2+Z_t)\le \frac{P(2)}{tC_W} P(Z_t),
$$
for the corresponding Ornstein-Uhlenbeck trajectory~$Z_t$.

On the other hand, $Z$ is a Harris recurrent process and $P(Z)$ is integrable with respect to the
Gaussian measure, thus due to the limit-quotient (or Birkhoff)
theorem, almost surely there exists a limit
$$
\lim_{t\to\infty} \frac{1}{t}\int_0^t P(Z_s) \, \mathrm{d}s =
\int_{\bbR^d} P(|z|) \, \mathrm{d}\gamma_{OU}(z) =: I.
$$
So, almost surely from some time $t_1$ we have
$$
\forall t>t_1, \quad  \frac{1}{t}\int_0^t P(Z_s) \, \mathrm{d}s
\leq I+1.
$$
Therefore, after this time we can estimate the displacement of
the center between the moments $t/2$ and $t$: $\forall t>t_1 $
\begin{eqnarray*}
|c_{t/2}-c_t| \le  \int_{t/2}^{t} |\dot{c}_s|
\, \mathrm{d}s \le \int_{t/2}^{t} \frac{C}{s} P(Z_s) \, \mathrm{d}s 
\le \frac{C}{t/2}\int_{0}^{t} P(Z_s) \, \mathrm{d}s \le
2C(I+1) =:C_3.
\end{eqnarray*}
In fact, the same estimate holds for any $t'$ between $t/2$ and
$t$:
$$|c_{t'}-c_t|\le C_3.
$$
This immediately implies that for any $t>t_1$ and $n\in\bbN$ such
that $2^{-n+1}t>t_1$, one has
$$
|c_t-c_{t/2^n}| \le C_3 n.
$$

Let us now apply Lemma~\ref{l:sep-measure}. First let us decompose, for any $t\in [t_1,2t_1]$, the measure $\mu_{2t}$ as $\frac{1}{2}\mu_t + \frac{1}{2} \mu_{[t,2t]}$, then the measure $\mu_{4t}$ as $\frac{1}{4}\mu_t + \left(\frac{1}{4} \mu_{[t,2t]} + \frac{1}{2}\mu_{[2t,4t]}\right)$, $\ldots$, and finally the measure $\mu_{2^nt}$ as $\frac{1}{2^n}\mu_t + \left(\frac{1}{2^n} \mu_{[t,2t]} +\cdots + \frac{1}{2}\mu_{[2^{n-1}t,2^n t]}\right)$. 
An induction argument, together with Lemma~\ref{l:K-alpha0}, immediately shows that in each such decomposition, the second term shifted by the corresponding $c(\mu_{2^j t})$ belongs to $\tilde{K}_{\alpha,C}^0$. The only part that is left to handle is $\frac{1}{2^n}\mu_t$. But the distance between $c_t$ and $c_{2^n t}$ does not exceed $C_3n$, and the centered measure $\mu_t^c$ is compactly supported. So it is contained in a ball of some (random) radius $R$ that can be chosen uniform over $t\in (t_1,2t_1)$. Now the measure $\frac{1}{2^n}\mu_t$ is of total weight $2^{-n}$ and it vanishes outside a radius $R$ ball. If $\alpha$ is small enough so that $e^{\alpha C_3}<2$, then for any $r>C_3n+R$, we have $$\frac{1}{2^n}\mu_t(|y- c_{2^n t}|>r)\le \frac{1}{2^n}\mu_t^c(|y|>r-C_3n) = 0,$$
and for $r\le C_3n +R$ and $n$ big enough,
$$ \frac{1}{2^n}\mu_t(|y- c_{2^n t}|>r)\le 2^{-n}< e^{-n\alpha C_3} e^{-\alpha R} \le e^{-\alpha r}.$$
The middle inequality comes, for $n$ large enough, from a comparison between exponent bases, $e^{\alpha C_3}<2$, with respect to which a multiplication constant $e^{-\alpha R}$ is minor. Finally, joining the obtained $\frac{1}{2^n}\mu_t(\cdot + c_{2^n t}) \in \tilde{K}_{\alpha,1}^0$ and $\left(\frac{1}{2^n} \mu_{[t,2t]} +\cdots + \frac{1}{2}\mu_{[2^{n-1}t,2^n t]}\right)(\cdot + c_{2^n t}) \in \tilde{K}_{\alpha,C}^0$, we obtain $\mu_{2^n t}\in K_{\alpha,C+1}$.
\end{proof}

\begin{proof}[Proof of Lemma \ref{l:K-alpha0}]
This lemma immediately follows from Lemma~\ref{l:OU_maj}, once we notice that 
\begin{eqnarray*}
\mu_{[t/2,t]}(|y-c_{t/2}|>r) &=& \frac{2}{t}\left| \{s: \, t/2 <s<t, \, |X_s-c_{t/2}|>r\}\right|\\ 
&\le & \frac{2}{t}\left| \{s: \, t/2 <s<t, \, |X_s-c_s|>r-|c_{t/2}-c_s|\}\right|\\ 
&\le &  \frac{2}{t}\left| \{s: \, s<t, \, Z_s>r-C_3\}\right|\le C_0 e^{\alpha_0 C_3}\cdot e^{-\alpha_0 r}.\qedhere 
\end{eqnarray*}
\end{proof}

\begin{proof}[Proof of Lemma \ref{l:sep-measure}]
First, let us estimate the position of the center of $\tilde{\mu}$ in a way that is linear in $\lambda$ and does not depend on $\alpha$ and $C$ --- thus in particular, proving the statement~i). Indeed, $c_{\tilde{\mu}}$ is the minimum of the function $W*\tilde{\mu}$. At the point $c_\mu$, the gradient of this function can be bounded as $$\left|\nabla W* \tilde{\mu}|_{c_\mu}\right| = \left| (1-\lambda)\nabla W*\mu|_{c_\mu} + \lambda \nabla W*\nu|_{c_\mu} \right| \le \lambda \|\nu(\cdot + c_\mu) \|_P \le C' \lambda,$$ because the norm $\|\nu(\cdot + c_\mu) \|_P$ is uniformly bounded due to the condition $\nu(\cdot + c_\mu) \in K_{\alpha_0,C_0}$.

Now, restricting the function $W*\tilde{\mu}$ on the line joining $c_\mu$ and $c_{\tilde{\mu}}$, that is considering $$f(s) = W*\tilde{\mu} \left(c_\mu + s \frac{c_{\tilde{\mu}} - c_\mu}{|c_{\tilde{\mu}} - c_\mu|}\right),$$ one sees that $|f'(0)|\le C' \lambda$, $f'(|c_{\tilde{\mu}} - c_\mu|)= 0$, $f''\ge C_W$, what implies $|c_{\tilde{\mu}} - c_\mu|\le \frac{C'}{C_W}\lambda$.

Let us now estimate the measure $\left( (1-\lambda)\mu^{(2)} + \lambda \nu \right)(|y-c_{\tilde{\mu}}|\ge r)$. Indeed, note that $\{y: \, |y-c_{\tilde{\mu}}| \ge r\}\subset \{y: \, |y-c_{\mu}| \ge r-C''\lambda\}$. Thus 
\begin{eqnarray}\label{eq:hat-mu}
\tilde{\mu}(|y-c(\tilde{\mu})|\ge r) &\le & \tilde{\mu}(|y-c(\mu)|\ge r-C''\lambda)\notag\\ 
&\le & (1-\lambda) \mu^{(2)}(|y-c(\mu)|\ge r-C''\lambda) + \lambda \Pi(\mu^c)(|y|\ge r-C''\lambda)\notag\\
&\le & (1-\lambda)C e^{-\alpha (r-C''\lambda)} + \lambda C_0 e^{-\alpha_0(r-C''\lambda)}\notag\\
&\le & \left( 1-\frac{\lambda}{2}\right)C e^{C''\alpha \lambda-\alpha r} - \lambda \left(\frac{C}{2} e^{-\alpha r} -C_0 e^{\alpha_0 C''\lambda-\alpha_0 r}\right) \notag\\
&\le & e^{\lambda(C''\alpha - 1/2)} C e^{-\alpha r} - \lambda \left( \frac{C}{2} e^{(\alpha_0 -\alpha) r} - C_0 e^{\alpha_0 C''\lambda} \right) e^{-\alpha_0 r}.
\end{eqnarray}
Once $\alpha$ is small enough so that $C''\alpha <1/2$, $\alpha <\alpha_0$ and once $C$ is greater than $2C_0 e^{\alpha_0 C''}$, the right-hand side of~\eqref{eq:hat-mu} is not greater than $Ce^{-\alpha r}$, what concludes the proof.
\end{proof}

\subsubsection{Estimates for the centered measure $\Pi$}\label{ss:flow-existence}
\begin{lemma}\label{Pibound}
For any $\kappa>1$, the map $\Pi$ restricted to
$\mathcal{P}_\kappa(\mathbb{R}^d;P)$ is bounded and Lipschitz.
\end{lemma}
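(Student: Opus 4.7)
The plan is to obtain uniform pointwise control of the density of $\Pi(\mu)$ over $\mu\in\mcP_\kappa(\Rn;P)$, and then to differentiate along a linear interpolation in $\mu$.

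First, I will establish two-sided bounds on $W*\mu$ that are uniform on $\mcP_\kappa(\Rn;P)$. The submultiplicativity $P(|x-y|)\le P(|x|)P(|y|)$ combined with~\eqref{eq:domination} yields $|W*\mu(x)|\le P(|x|)\|\mu\|_P\le \kappa P(|x|)$. For the matching lower bound I will use that spherical symmetry of $W$ forces $\nabla W(0)=0$, and uniform convexity then gives $W(z)\ge W(0)+\tfrac{C_W}{2}|z|^2$. Convolving, using $2\langle x,\bar\mu\rangle\le |x|^2+4|\bar\mu|^2$ with $\bar\mu:=\int y\,d\mu(y)$, together with the bound $|\bar\mu|^2\le \int|y|^2\,d\mu\le \kappa/A$ (since $|y|^2\le P(|y|)/A$ when $k\ge 2$), produces
\[
W*\mu(x)\ge W(0)+\tfrac{C_W}{4}|x|^2-C_1(\kappa).
\]
A lower bound $Z(\mu)\ge z_0(\kappa)>0$ comes by integrating $e^{-W*\mu}$ on the unit ball (where the upper bound gives $W*\mu\le \kappa P(1)$), so
\[
\Pi(\mu)(x)\le C_2(\kappa)\,e^{-C_W|x|^2/4}
\]
uniformly in $\mu\in\mcP_\kappa(\Rn;P)$. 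This Gaussian domination gives $\int P(|x|)^n\,d\Pi(\mu)(x)\le M_n(\kappa)$ for every $n$, and in particular $\|\Pi(\mu)\|_P\le M(\kappa)$, proving boundedness.

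For the Lipschitz estimate I will interpolate: given $\mu,\nu\in\mcP_\kappa(\Rn;P)$, set $\mu_s=(1-s)\mu+s\nu$ for $s\in[0,1]$; convexity of $\|\cdot\|_P$ ensures $\mu_s\in\mcP_\kappa$, so the previous step applies uniformly in $s$. Since $W*\mu_s$ is linear in $s$, direct differentiation (legitimised by the Gaussian decay of the densities, dominating all polynomial weights) gives
\[
\partial_s\Pi(\mu_s)(x)=-\Pi(\mu_s)(x)\Bigl[W*(\nu-\mu)(x)-\langle W*(\nu-\mu),\Pi(\mu_s)\rangle\Bigr].
\]
Bounding $|W*(\nu-\mu)(x)|\le P(|x|)\|\nu-\mu\|_P$ pointwise, the scalar bracket by $\|\nu-\mu\|_P\,\|\Pi(\mu_s)\|_P\le M(\kappa)\|\nu-\mu\|_P$, and integrating against $P(|x|)\,dx$, one obtains
\[
\|\partial_s\Pi(\mu_s)\|_P\le \|\nu-\mu\|_P\int P(|x|)\bigl(P(|x|)+M(\kappa)\bigr)\Pi(\mu_s)(x)\,dx\le L(\kappa)\,\|\nu-\mu\|_P,
\]
the last inequality using the uniform moment estimate from the first step. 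Integrating in $s$ yields $\|\Pi(\mu)-\Pi(\nu)\|_P\le L(\kappa)\|\mu-\nu\|_P$.

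The only delicate point is making sure that polynomial moments (up to degree $2k$) of $\Pi(\mu_s)$ are uniformly bounded; this is precisely why the explicit quadratic lower bound $\tfrac{C_W}{4}|x|^2$ is isolated in the first step, since Gaussian decay beats every polynomial with constants depending only on $\kappa$ and the data of $W$, so no finer control is needed.
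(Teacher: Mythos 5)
Your proof is essentially the paper's: the same two-sided bounds on $W*\mu$ (upper via $\|\mu\|_P\le\kappa$, lower via uniform convexity and $\nabla W(0)=0$) give a Gaussian-dominated density for $\Pi(\mu)$ uniformly on $\mathcal P_\kappa$, hence boundedness and uniform control of all polynomial moments; and the Lipschitz estimate comes from differentiating $\Pi$ along the segment $\mu_s=(1-s)\mu+s\nu$, which is exactly the paper's $D\Pi(\mu)\cdot\nu=-\bigl(W*\nu-\int W*\nu\,\mathrm d\Pi(\mu)\bigr)\Pi(\mu)$ with uniformly bounded operator norm. One cosmetic slip: the Young inequality you wrote, $2\langle x,\bar\mu\rangle\le|x|^2+4|\bar\mu|^2$, kills the $|x|^2$ term entirely; you want an unbalanced splitting such as $2\langle x,\bar\mu\rangle\le\tfrac12|x|^2+2|\bar\mu|^2$ to retain the $\tfrac{C_W}{4}|x|^2$ you then quote (and which the paper also gets).
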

\begin{proof}
First, we need to show that $Z(\mu)$ is bounded from
below on $\mathcal{P}_\kappa(\mathbb{R}^d;P)$. For $\mu \in \mathcal{P}_\kappa(\mathbb{R}^d;P)$, the domination
condition~\eqref{eq:domination} implies that $W*\mu(x) \leq \vert \vert \mu \vert \vert_P P(|x|) \leq \kappa P(|x|)$. So we
have:
$$Z(\mu)=\int_{\mathbb{R}^d} e^{-W*\mu(x)} \mathrm{d}x \geq \int_{\mathbb{R}^d}
e^{-\kappa P(|x|)} \mathrm{d}x.$$ Now, using that $W*\mu(x) = \int W(x-y) \mu(\mathrm{d}y) \ge \frac{C_W}{2} \int |x-y|^2 \mu(\mathrm{d}y) \ge \frac{C_W}{2} \int \left(\frac{|x|^2}{4} -|y|^2\right) \mu(\mathrm{d}y) \ge \frac{C_W}{2} \left(\frac{|x|^2}{4} -\kappa \right)$, 
we hence have the
following bound for $\Pi(\mu)$:
\begin{equation}\label{Cbeta}
\vert \vert \Pi(\mu) \vert\vert_P \leq \left( \int_{\mathbb{R}^d} e^{-\kappa P(|x|)} \mathrm{d}x\right)^{-1}\cdot \int_{\mathbb{R}^d} P(|x|) e^{-\frac{C_W}{2}(|x|^2/4 -\kappa)}\mathrm{d}x =: C_\kappa.
\end{equation}
Note that $\Pi$ is $C^1$ on $\mathcal{P}(\mathbb{R}^d;P)$ endowed with the strong topology. As the set of probability measures has no interior point, we have to specify the meaning of $C^1$: there exists a continuous linear operator $D\Pi(\mu): \mathcal{M}_0(\mathbb{R}^d;P) \rightarrow \mathcal{M}_0(\mathbb{R}^d;P)$, continuously depending on $\mu$, such that $\|\Pi(\mu') - \Pi(\mu) - D\Pi(\mu) (\mu-\mu')\|_P = O(\|\mu -\mu'\|_P)$ provided that $\mu'\in \mathcal{P}(\mathbb{R}^d;P)$ and $\mu'$ converges toward $\mu$. Indeed, it is easy to see that 
\begin{eqnarray}\label{diffPi}
D\Pi(\mu)\cdot \nu &:=& -(W*\nu) \Pi(\mu) -\frac{DZ(\mu)\cdot \nu}{Z(\mu)^2} e^{-W*\mu} \notag\\
&=& -(W*\nu) \Pi(\mu) + \int_{\mathbb{R}^d} W*\nu(y)\frac{e^{-W*\mu(y)}}{Z(\mu)}\mathrm{d}y \, \frac{e^{-W*\mu}}{Z(\mu)} \notag\\
&=& -\left(W*\nu -\int_{\mathbb{R}^d} W*\nu(y)\Pi(\mu)(\mathrm{d}y)\right) \Pi(\mu).
\end{eqnarray}
Now, note that the norms $\|D\Pi\|$ are uniformly bounded for $\mu \in \mathcal{P}_\kappa(\mathbb{R}^d;P)$ (for any given $\kappa$). Indeed, fix $\nu \in \mathcal{M}_0(\mathbb{R}^d;P)$. Since $\vert W*\nu(x) \vert \leq \vert \vert \nu \vert\vert_P P(|x|)$, we find that
\begin{equation*}
\| D\Pi(\mu)\cdot \nu \|_P \leq (1+C_\kappa) \| \nu \|_P \int_{\mathbb{R}^d} P^2(|x|)\Pi(\mu)(\mathrm{d}x).
\end{equation*}
For $\mu \in \mathcal{P}_\kappa(\mathbb{R}^d;P)$, the same
computation used for the bound~\eqref{Cbeta} on the norm of $\Pi(\mu)$ enables to control
the last integral. Hence, we deduce a bound (call it $C'_\kappa$) on
the norm of the differential. Thus, $\Pi$ is Lipschitz as stated.
\end{proof}

We prove now the exponential decrease for the centered measure $\Pi(\mu)$.
\begin{proposition}\label{prop:exp-decrease}
There exists a positive constant $C_\Pi$ such that for all $\mu\in
\mcP(\mathbb{R};P)$, for all $R>0$, we have $\Pi(\mu)(|x-c_\mu|\ge R)\le
C_\Pi e^{-C_W R}$.
\end{proposition}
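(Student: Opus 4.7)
The strategy is to exploit the $C_W$-strong log-concavity of the measure $\Pi(\mu)$ via concentration of measure, rather than trying to bound numerator and denominator separately (which would introduce a dependence on $\|\mu\|_P$). After translating by $c_\mu$ so that, without loss of generality, $c_\mu = 0$, set $V := W*\mu$. Then $\nabla V(0) = 0$, and since $\mu$ is a probability measure, $\nabla^2 V = (\nabla^2 W)*\mu \succeq C_W I$. Hence $V$ is $C_W$-strongly convex with minimum at $0$, and $\Pi(\mu) = e^{-V}/Z(\mu)$ is a $C_W$-strongly log-concave probability measure.

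The first step is to bound the mean $\mathbb{E}_{\Pi(\mu)}|Y|$ uniformly in $\mu$. Integration by parts (which is justified because $e^{-V}$ decays at least as fast as a Gaussian, thanks to the strong convexity lower bound $V(y) \ge V(0) + \tfrac{C_W}{2}|y|^2$) yields
$$\mathbb{E}\bigl[\nabla V(Y)\cdot Y\bigr] = \frac{1}{Z(\mu)}\int (\nabla V\cdot y)\, e^{-V(y)}\,\mathrm{d}y = \frac{1}{Z(\mu)}\int \mathrm{div}(y)\, e^{-V(y)}\,\mathrm{d}y = d.$$
On the other hand, the monotonicity of $\nabla V$ implied by $C_W$-strong convexity, combined with $\nabla V(0) = 0$, gives the pointwise inequality $\nabla V(y)\cdot y \ge C_W |y|^2$. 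Taking expectations, $C_W\, \mathbb{E}|Y|^2 \le d$, so by Jensen $\mathbb{E}|Y| \le \sqrt{d/C_W}$.

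The second step is concentration. Since $V$ is $C_W$-strongly convex, the Bakry--Emery criterion shows that $\Pi(\mu)$ satisfies a logarithmic Sobolev inequality with constant $C_W$, and Herbst's argument then produces the dimension-free Gaussian concentration bound: for every $1$-Lipschitz $f$ and every $r > 0$, $\Pi(\mu)\bigl(|f - \mathbb{E}f|\ge r\bigr) \le 2e^{-C_W r^2/2}$. Applied to the $1$-Lipschitz function $f(y) = |y|$ and combined with the mean bound, this yields, for all $R \ge \sqrt{d/C_W}$,
$$\Pi(\mu)\bigl(|y|\ge R\bigr) \le 2\exp\!\Bigl(-\tfrac{C_W}{2}\bigl(R-\sqrt{d/C_W}\bigr)^2\Bigr).$$

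The final step is a cosmetic conversion of Gaussian tail to exponential tail. For $R \ge 2\sqrt{d/C_W}$ the argument of the exponential is at most $-C_W R^2/8$, and since $-C_W R^2/8 + C_W R$ attains a universal maximum over $R \ge 0$, we may absorb the linear term at the cost of enlarging the prefactor and get $\Pi(\mu)(|y|\ge R) \le C\, e^{-C_W R}$. For $R \le 2\sqrt{d/C_W}$ the bound $\Pi(\mu)(|y|\ge R) \le 1$ trivially gives the desired estimate with a sufficiently large $C_\Pi$. The main obstacle is the first step: producing a mean bound that does not depend on $\|\mu\|_P$; this is the reason for using the integration-by-parts identity above rather than trying to bound $V$ near the origin, which would only give an estimate dependent on $\mu$.
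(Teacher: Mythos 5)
Your proof is correct but takes a genuinely different route from the paper's. The paper works directly with the explicit density $e^{-W*\mu}/Z(\mu)$: it reduces to showing the ratio $\int_{|x-c_\mu|\ge R} e^{-W*\mu}\,\mathrm{d}x \big/ \int_{|x-c_\mu|\le 2} e^{-W*\mu}\,\mathrm{d}x \le C_\Pi e^{-C_W R}$, passes to polar coordinates centered at $c_\mu$, and proves the one-dimensional radial estimate directionwise using only $f'(0)=0$, $f''\ge C_W$ and elementary comparisons of integrals. Your argument instead treats $\Pi(\mu)$ abstractly as a $C_W$-strongly log-concave probability measure and invokes the Bakry--\'Emery criterion and the Herbst argument to obtain dimension-free Gaussian concentration, together with the integration-by-parts identity $\mathbb{E}[\nabla V(Y)\cdot Y]=d$ to control the mean uniformly in $\mu$ (a nice touch, since a naive bound on $V$ near the origin would bring in $\|\mu\|_P$). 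Your approach is conceptually cleaner and actually yields a strictly stronger Gaussian tail $e^{-cR^2}$ before you deliberately weaken it to match the statement, but it leans on the log-Sobolev/concentration machinery (already implicitly present in the paper elsewhere, via the Poincar\'e constant for the frozen process), whereas the paper's proof is fully elementary and self-contained. Both are correct; the paper's version is lighter on prerequisites, yours exposes the structural reason (strong log-concavity) for the uniform tail bound and gives a sharper estimate.
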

\begin{proof}
Note first that, imposing a condition $C_\Pi\ge e^{2C_W}$, we can restrict ourselves only on $R\ge 2$: for $R<2$, the estimate is obvious. The measure $\Pi(\mu)$ has the density $\frac{1}{Z(\mu)} e^{-W*\mu(x)}$. To avoid working with the normalization constant $Z(\mu)$, we will prove a stronger inequality, that is $$\Pi(\mu)(|x-c_\mu|\ge R)\le
C_\Pi e^{-C_W R}\cdot \Pi(\mu)(|x-c_\mu|\le 2),$$
which is equivalent to $$\int_{|x-c_\mu|\ge R} e^{-W*\mu(x)} \mathrm{d}x \le C_\Pi e^{-C_W R} \int_{|x-c_\mu|\le 2} e^{-W*\mu(x)} \mathrm{d}x.$$ 
Pass to the polar coordinates, centered at the center $c_\mu$: we want to prove that $$\int_{\mathbb{S}^{d-1}} \int_R^\infty e^{-W*\mu (c_\mu + \lambda v)}\lambda^{d-1}\mathrm{d}\lambda \mathrm{d}v \le C_\Pi e^{-C_W R} \int_{\mathbb{S}^{d-1}} \int_0^2 e^{-W*\mu (c_\mu + \lambda v)}\lambda^{d-1}\mathrm{d}\lambda \mathrm{d}v.$$
It suffices to prove such an inequality ``directionwise'': for all $v\in \mathbb{S}^{d-1}$, for all $R\ge 2$ 
$$\int_R^\infty e^{-W*\mu (c_\mu + \lambda v)}\lambda^{d-1}\mathrm{d}\lambda \le C_\Pi e^{-C_W R} \int_0^2 e^{-W*\mu (c_\mu + \lambda v)}\lambda^{d-1}\mathrm{d}\lambda.$$ But from the uniform convexity of $W$ and the definition of the center, the function $f(\lambda) = W*\mu (c_\mu + \lambda v)$ satisfies $f'(0) =0$ and $\forall r>0$, $f''(r)\ge C_W$. Hence, $f$ is monotone increasing on $[0,\infty)$, and in particular, 
\begin{equation}\label{eq:coord-polaires}
\int_0^2 e^{-f(\lambda)} \lambda^{d-1} \mathrm{d}\lambda \ge e^{-f(2)} \int_0^2 \lambda^{d-1}\mathrm{d}\lambda =: C_1 e^{-f(2)}.
\end{equation}
On the other hand, for all $\lambda \ge 2$, $f'(\lambda) \ge f'(2)\ge 2C_W$, and thus $f(\lambda)\ge 2C_W (\lambda -2) + f(2)$. Hence, 
\begin{equation}\label{eq:coord-polaires-2}
\int_R^\infty e^{-f(\lambda)} \lambda^{d-1} \mathrm{d}\lambda \le e^{-f(2)} \int_R^\infty \lambda^{d-1} e^{-2C_W(\lambda-2)} \mathrm{d}\lambda \le C_2 R^{d-1} e^{-2C_W R} \cdot e^{-f(2)} \le C_3 e^{-C_WR} \cdot e^{-f(2)}.
\end{equation}
Comparing~\eqref{eq:coord-polaires} and~\eqref{eq:coord-polaires-2}, we obtain the desired exponential decrease.
\end{proof}

\subsection{A new transport metric: $\mT_P$-metric}\label{ss:invariant}

Usually, to estimate the distance between two probability measures, one introduces the Wasserstein distance. Indeed, for $\mu_1,\mu_2 \in \mathcal{P}(\Rn;P)$, we define $$W^2_2(\mu_1,\mu_2) := \inf \{ \mathbb{E}(|\xi_1-\xi_2|^2)\},$$ where the infimum is taken over the random variables such that $\{$law of $\xi_1\}= \mu_1$ and $\{$law of $\xi_2\}= \mu_2$. In our setting, for a measure $\mu$, the corresponding probability
measure $\Pi(\mu)$ is defined using the convolution $W*\mu$. So,
it would be rather natural to use a distance, looking like the one
for the weak* topology, but allowing to control $W*\mu$ for our
unbounded function $W$. This motivates to introduce a new metric looking like the Wasserstein distance:
\begin{definition}
For $\mu_1,\mu_2 \in \mcP(\Rn;P)$, we define the \textit{$P$-translation
distance} between them as 
\begin{equation}
\mT_P(\mu_1,\mu_2) := \inf\left\{ \iint_0^1 P(|f(s,\omega)|) |f'_s(s,\omega)| \, \mathrm{d}s
\mathrm{d}\mathbb{P}\right\}, 
\end{equation}
where the infimum is taken over
 the maps $f:[0,1]\times \Omega
\rightarrow \mathbb{R}$, where $\Omega$ is a probability space, such that $\{\text{law of } f(0,\cdot) \} = \mu_1$,
and $\{\text{law of } f(1,\cdot)\} = \mu_2.$

We also denote the $\mT_P$-distance between two $c$-centered measures by $\mT_P^c(\mu_1,\mu_2)=\mT_P(\mu_1(\cdot +c),\mu_2(\cdot+c))$.
\end{definition}
\begin{remark}
In dimension one, we have the equivalent definition: 
$$\mT_P(\mu_1,\mu_2) := \int_{\mathbb{R}} P(|x|) |\mu_1((-\infty,x])-\mu_2((-\infty,x])|\, \mathrm{d}x.$$
\end{remark}

The following lemma will be useful to show the convergence of the empirical measure in the $W_2$-meaning, as Proposition~\ref{p:exp-conv} shows.
\begin{lemma}
Let $\mu_1,\mu_2\in \mathcal{P}(\Rn;P)$. There exists a constant $C>0$, such that $$W_2^2(\mu_1,\mu_2) \le C \mT_P(\mu_1,\mu_2).$$ If moreover $\mu_1, \mu_2$ belong to a set $K_{\alpha,C_0}$, then there exists $C'>0$, such that $$\mT_P(\mu_1,\mu_2)\le C' W_2^2(\mu_1,\mu_2).$$
\end{lemma}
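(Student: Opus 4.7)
The plan is to prove the two inequalities by separate coupling arguments; the first reduces to a 1D algebraic bound via a projection, the second to Cauchy--Schwarz on a straight-line coupling using the exponential tail decay in $K_{\alpha,C_0}$.

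For $W_2^2\le C\,\mT_P$, the strategy is first to prove a pointwise bound
\[
|f(1)-f(0)|^2 \le C\int_0^1 P(|f(s)|)\,|f'_s(s)|\,\mathrm{d}s,
\]
valid for every $C^1$ path $f:[0,1]\to\Rn$, with $C$ depending only on $P$. Once this is established, any admissible $f$ for $\mT_P$ gives a coupling of $\mu_1,\mu_2$ via its endpoints, so $W_2^2 \le \mathbb{E}|f(1)-f(0)|^2$; taking expectations and then the infimum over $f$ yields the claim. The pointwise bound is proved by projection: setting $\hat v=(f(1)-f(0))/|f(1)-f(0)|$ and $g(s)=f(s)\cdot\hat v$, one has $g(1)-g(0)=|f(1)-f(0)|$, $|g|\le|f|$ and $|g'|\le|f'_s|$. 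Monotonicity of $P(|\cdot|)$ gives $\int_0^1 P(|g|)|g'|\,\mathrm{d}s \le \int_0^1 P(|f|)|f'_s|\,\mathrm{d}s$, reducing matters to the 1D version. In 1D, using the primitive $G(x):=\int_0^x P(|u|)\,\mathrm{d}u$, the chain rule yields $\int_0^1 P(|g|)|g'|\,\mathrm{d}s\ge |G(g(1))-G(g(0))|$; a pigeonhole on $\{u\in[a,b]:|u|\ge|b-a|/4\}$ (which has length $\ge|b-a|/2$) shows $G(b)-G(a)\ge c_P(|b-a|+|b-a|^{k+1})$; and the elementary inequality $r^2\le C_k(r+r^{k+1})$, valid for $k\ge 2$ by separating the regimes $r\le 1$ and $r\ge 1$, closes the argument.

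For the reverse inequality $\mT_P\le C'\,W_2^2$ on $K_{\alpha,C_0}$, the plan is to plug the $W_2$-optimal coupling $(\xi_1,\xi_2)$ into the straight-line test path $f(s,\omega)=(1-s)\xi_1(\omega)+s\xi_2(\omega)$. Then $|f'_s|=|\xi_1-\xi_2|$ and $|f(s)|\le|\xi_1|+|\xi_2|$; the submultiplicativity $P(|x|+|y|)\le P(|x|)P(|y|)$ from the remark after Theorem~\ref{t:main} yields
\[
\mT_P \le \mathbb{E}\bigl[|\xi_1-\xi_2|\,P(|\xi_1|)P(|\xi_2|)\bigr].
\]
The exponential tail decay $\mu_i\in K_{\alpha,C_0}$ ensures that all polynomial moments of $|\xi_i|$ are finite and controlled by $\alpha,C_0,P$, so $\mathbb{E}[P^2(|\xi_1|)P^2(|\xi_2|)]\le C''$ by Cauchy--Schwarz on that product. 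Applying Cauchy--Schwarz once more, splitting $|\xi_1-\xi_2|$ from the polynomial weight and using $\mathbb{E}|\xi_1-\xi_2|^2=W_2^2$ for the optimal coupling, yields the claim up to the stated constant.

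The main obstacle will be the explicit algebra in the first inequality: the pointwise bound must hold uniformly in $a,b\in\mathbb{R}$, including the sign-change case $a<0<b$ where $G$ is odd, so the pigeonhole lower bound on $G(b)-G(a)$ and the elementary $r^2\le C_k(r+r^{k+1})$ have to be combined with slight case-analysis on the primitive. This is routine but must be done carefully to ensure that the constant $C$ depends only on $P$ and not on the measures themselves.
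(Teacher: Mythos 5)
For the first inequality $W_2^2(\mu_1,\mu_2)\le C\,\mT_P(\mu_1,\mu_2)$, your argument is correct and takes a genuinely different route from the paper's. You reduce to dimension one by projecting the path $f(\cdot,\omega)$ onto the chord direction, pass to the primitive $G(x)=\int_0^x P(|u|)\,\mathrm{d}u$ so that $\int_0^1 P(|g|)|g'|\,\mathrm{d}s\ge|G(g(1))-G(g(0))|$, lower-bound this by the pigeonhole on $\{u\in[a,b]:|u|\ge|b-a|/4\}$, and close with $r^2\le r+r^{k+1}$. The paper instead argues geometrically about the $\mT_P$-optimal path: either it stays outside the ball of radius $M/2$ with $M=\max(|\xi_1|,|\xi_2|)$, or it has a sub-arc of length at least $M/2$ from the farthest endpoint to that ball, and then one uses that $A$ was chosen large enough so that $P(M/2)/4\ge 2M$. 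Your version is cleaner in that it does not depend on the particular normalization of $P$; the paper's avoids the projection and the primitive. Either works.

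For the converse inequality on $K_{\alpha,C_0}$, your plan coincides with the paper's (plug the $W_2$-optimal coupling $(\xi_1,\xi_2)$ into the straight-line path, bound the weight, finish with Cauchy--Schwarz and moment bounds; you use submultiplicativity of $P$, the paper uses $P(\max(|\xi_1|,|\xi_2|))$). But notice what Cauchy--Schwarz actually produces:
\[
\mT_P(\mu_1,\mu_2)\ \le\ \sqrt{\mathbb{E}|\xi_1-\xi_2|^2}\;\sqrt{\mathbb{E}\bigl[P^2(|\xi_1|)P^2(|\xi_2|)\bigr]}\ =\ C\,W_2(\mu_1,\mu_2),
\]
\emph{not} $C'\,W_2^2(\mu_1,\mu_2)$. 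The $W_2^2$ bound is in fact false in general: for $\mu_1=\mathcal{N}(0,1)$ and $\mu_2=\mathcal{N}(\epsilon,1)$, both lie in a fixed $K_{\alpha,C_0}$, $\mT_P\ge W_1=\epsilon$, while $W_2^2=\epsilon^2$, so $\mT_P/W_2^2\to\infty$ as $\epsilon\to 0$. The paper's displayed ``Cauchy'' inequality silently has the same defect, so the square in the statement is presumably a typo for $W_2$; the later uses (e.g. in the proof of Proposition~\ref{p:exp-conv}) only require $\mT_P\lesssim\sqrt{\mF}$, and so survive with the exponent halved. You should record your conclusion honestly as $\mT_P\le C' W_2$, rather than asserting that the Cauchy--Schwarz step ``yields the claim up to the stated constant'' — it does not yield the $W_2^2$ form.
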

\begin{proof}
Suppose that $\mu_1,\mu_2\in K_{\alpha,C_0}$. Take $\xi_1,\xi_2$ realizing the optimal $W_2$-transport between them, and let us estimate the $\mT_P$-cost of the same transport. Indeed, $$\mT_P(\mu_1,\mu_2) \le \int |\xi_1 - \xi_2|P(\max (|\xi_1|,|\xi_2|))\mathrm{d}F_{\max (\xi_1,\xi_2)}\le W_2^2(\mu_1,\mu_2) \int P^2(\max (|\xi_1|,|\xi_2|))\mathrm{d}F_{\max (\xi_1,\xi_2)},$$ where the second inequality is the Cauchy one. As $\mu_1,\mu_2\in K_{\alpha,C_0}$, we conclude that $$\int P^2(\max (|\xi_1|,|\xi_2|))\mathrm{d}F_{\max (\xi_1,\xi_2)}\le \int P^2(r) \mathrm{d} \max (0,1-2C_0 e^{-\alpha r})=: C'<\infty.$$

Let now $\xi_1,\xi_2$ be two random variables corresponding to the $\mT_P$-optimal transport of $\mu_1$ to $\mu_2$. We then have 
\begin{eqnarray}\label{eq:W-T_P}
W^2_2(\mu_1,\mu_2) = \int |\xi_1-\xi_2|^2 \mathrm{d}\mathbb{P} \le \int |\xi_1 - \xi_2|\cdot 2 \max (\xi_1,\xi_2) \le \int |\xi_1 - \xi_2|\cdot \frac{P(\max (\xi_1,\xi_2)/2)}{4}\notag \\ 
\le C \mT_P(\mu_1,\mu_2). 
\end{eqnarray}
Indeed, the inequality~\eqref{eq:W-T_P} is due to the fact that the path between $\xi_1$ and $\xi_2$ either stays outside the radius $\max (|\xi_1|,|\xi_2|)/2$-ball centered in 0, in which case we estimate its length from below as $|\xi_1-\xi_2|$, or this path has a part joining the maximum norm vector to this ball, which is of length greater than $\max (|\xi_1|,|\xi_2|)/2 \ge |\xi_1-\xi_2|/4$.
\end{proof}

It is clear from the definition that $\mT_P$ is a distance; and also
taking into account that $|P'|\le P$, one easily has
\begin{equation}
\|\mu_2\|_P \le \|\mu_1\|_P + \mT_P(\mu_1,\mu_2).
\end{equation}
Thus, the set $\mcP(\Rn;P)$ is $\mT_P$-complete. Indeed, a $\mT_P$-Cauchy
sequence $(\mu_n)$ will have a weak limit $\mu$ and it is easy to
check that $\|\mu\|_P = \underset{n\rightarrow \infty}{\lim}
\|\mu_n\|_P <\infty$. So, $\mu\in \mcP(\Rn;P)$. Now, we are going
to estimate the deviance of trajectories in terms of $\mT_P$-metric, a result that will be useful in \S\ref{s:discretization}.

\begin{lemma}\label{l:mT_P}
For $\mu_1,\mu_2 \in \mcP(\Rn;P)$, the following statements hold:
\begin{enumerate}
\item[1)] The map $c$ is locally Lipschitz in the sense of $\mT_P$-metric:
$$
|c(\mu_1)-c(\mu_2)|\le \frac{1}{C_W} \min (P(\vert c(\mu_1)\vert), P(\vert c(\mu_2)\vert)) \cdot \mT_P(\mu_1,\mu_2);
$$
\item[2)] For all $v\in \Rn$, we have $\mT_P(\mu,\mu(\cdot +v)) \le \vert v\vert P(\vert v\vert) \|\mu\|_P$;
\item[3)] There exists $C_P>0$ such that $$\mT_P^c (\mu,\nu) \le \sup_{x\ge 0} \frac{P(x+\vert v\vert)}{P(x)} \mT_P(\mu,\nu) \le \begin{cases} (1+C_P \vert v\vert) \cdot \mT_P(\mu,\nu), & \vert v\vert\le 1\\ P(\vert v\vert)\mT_P(\mu,\nu), & \forall \vert v\vert ; \end{cases}$$
\item[4)] For all $\kappa>0$, $\mu^c:\, \mathcal{P}_\kappa(\Rn;P) \to \mathcal{P}(\Rn;P)$ is $\mT_P$-Lipschitz.
\end{enumerate}
\end{lemma}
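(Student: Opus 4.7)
The plan is to handle the four claims in turn, with (1)--(3) as independent coupling estimates and (4) assembling them via the triangle inequality.

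For (1), I start from $\nabla W*\mu_i(c_{\mu_i})=0$ for $i=1,2$. Uniform convexity gives $|\nabla W*\mu_2(c_{\mu_1})-\nabla W*\mu_2(c_{\mu_2})|\ge C_W|c_{\mu_1}-c_{\mu_2}|$, while the left-hand side equals $|\nabla W*(\mu_2-\mu_1)(c_{\mu_1})|$. Using an almost $\mT_P$-optimal transport $f:[0,1]\times\Omega\to\Rn$ between $\mu_1$ and $\mu_2$,
\[
\nabla W*(\mu_2-\mu_1)(c_{\mu_1}) = -\mathbb{E}\int_0^1 \nabla^2 W(c_{\mu_1}-f(s,\omega))\,f'_s(s,\omega)\,\mathrm{d}s,
\]
and the domination~\eqref{eq:domination} together with the sub-multiplicativity $P(|x-y|)\le P(|x|)P(|y|)$ (from the Remark in \S\ref{ss:Main}) give $\|\nabla^2 W(c_{\mu_1}-f)\|\le P(|c_{\mu_1}|)P(|f|)$. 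Taking the $\mT_P$-infimum and swapping the roles of $\mu_1$ and $\mu_2$ produce the desired minimum.

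For (2), the straight-line coupling $f(s,\omega)=\xi-sv$ with $\xi\sim\mu$ transports $\mu$ to $\mu(\cdot+v)$, has $|f'_s|=|v|$ and $|f(s,\omega)|\le|\xi|+|v|$; sub-multiplicativity then gives the claim. For (3), the shifted coupling $g=f-v$ transports $\mu(\cdot+v)$ to $\nu(\cdot+v)$ with $g'_s=f'_s$, so
\[
\mT_P(\mu(\cdot+v),\nu(\cdot+v)) \le \mathbb{E}\int_0^1 P(|f|+|v|)\,|f'_s|\,\mathrm{d}s \le \sup_{x\ge 0}\frac{P(x+|v|)}{P(x)}\,\mT_P(\mu,\nu).
\]
The two casewise estimates follow from a one-variable analysis of $(1+(x+|v|)^k)/(1+x^k)$: linear growth in $|v|$ on $[0,1]$ (using $P'\le kP/x$ on any bounded interval), and the trivial bound $P(|v|)$ globally via sub-multiplicativity again.

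For (4), I first observe that $|c_\mu|\le \|\mu\|_P/C_W$ on $\mcP_\kappa(\Rn;P)$: uniform convexity applied along the segment from $0$ to $c_\mu$ yields $|\nabla W*\mu(0)|\ge C_W|c_\mu|$, while $|\nabla W*\mu(0)|\le\int P(|y|)\mu(\mathrm{d}y)=\|\mu\|_P\le\kappa$. Then the triangle inequality
\[
\mT_P(\mu_1^c,\mu_2^c) \le \mT_P\bigl(\mu_1(\cdot+c_{\mu_1}),\mu_2(\cdot+c_{\mu_1})\bigr) + \mT_P\bigl(\mu_2(\cdot+c_{\mu_1}),\mu_2(\cdot+c_{\mu_2})\bigr)
\]
reduces the first term to (3) with shift $v=c_{\mu_1}$ (so the sup factor is bounded by a $\kappa$-dependent constant), and the second to (2) applied to $\mu_2(\cdot+c_{\mu_1})$ with shift $c_{\mu_2}-c_{\mu_1}$, whose length is controlled by (1). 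The norm $\|\mu_2(\cdot+c_{\mu_1})\|_P\le P(|c_{\mu_1}|)\|\mu_2\|_P$ is uniformly bounded by $P(\kappa/C_W)\cdot\kappa$, so both pieces scale linearly in $\mT_P(\mu_1,\mu_2)$. The main technical step is (1): the sub-multiplicativity trick is essential there to turn the $\nabla^2 W$ integrand along the moving transport into the clean product $P(|c_{\mu_1}|)\cdot\mathbb{E}\int P(|f|)|f'_s|$; once that is in place, everything else is bookkeeping.
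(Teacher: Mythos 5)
Your proof is correct and follows the same overall route as the paper: uniform convexity along the segment from $c_{\mu_1}$ to $c_{\mu_2}$ for (1), the straight-line coupling for (2), the translated coupling for (3), and the triangle inequality assembling (1)--(3) for (4). One point worth flagging is that in part (1) your argument is actually more careful than the published one. The paper bounds $|\nabla W*(\mu_2-\mu_1)(c_1)|$ by $P(|c_1|)\,\|\mu_2-\mu_1\|_P$ and then concludes with the claim $\|\mu_2-\mu_1\|_P\le\mT_P(\mu_1,\mu_2)$, which is not true in general: for $\mu_1=\delta_0$ and $\mu_2=\delta_\varepsilon$ one has $\|\mu_1-\mu_2\|_P=P(0)+P(\varepsilon)\ge 2$ while $\mT_P(\mu_1,\mu_2)\to 0$ as $\varepsilon\to 0$. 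Your version sidesteps this by differentiating $s\mapsto\nabla W\bigl(c_{\mu_1}-f(s,\omega)\bigr)$ along a near-optimal transport, so that the integrand involves $\nabla^2 W$, which is $P$-dominated and sub-multiplicative; this gives directly $|\nabla W*(\mu_2-\mu_1)(c_1)|\le P(|c_1|)\,\mT_P(\mu_1,\mu_2)$, which is what the rest of the argument needs. Your explicit bound $|c_\mu|\le\|\mu\|_P/C_W$ in part (4) likewise fills in a detail the paper only asserts. Otherwise the two proofs coincide in substance.
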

\begin{proof}
1) Denoting by $c_1$ (resp. $c_2$) the center of
$\mu_1$ (resp. $\mu_2$), we have 
$$\nabla W*\mu_2(c_1) = \nabla W*\mu_1(c_1) + \nabla W*(\mu_2-\mu_1)(c_1),$$
thus $|\nabla W*\mu_2(c_1)|\le P(|c_1|) ||\mu_2-\mu_1||_P$. Joining the points $c_1$ and $c_2$ by a line, recalling that due to the uniform convexity of $W$, the second derivative of $W*\mu_2$ along this line is at least $C_W \mu_2(\Rn)$ and noticing that $\nabla W*\mu_2(c_2) = 0$, we obtain 
\begin{equation}
 |c_2 - c_1| \le \frac{P(|c_1|)}{C_W} \|\mu_2 -\mu_1\|_P.
\end{equation}
Similarly, we have $|c_2 - c_1| \le \frac{P(|c_2|)}{C_W} \|\mu_2 -\mu_1\|_P$. So, the result follows as $\|\mu_2 -\mu_1\|_P\le \mT_P(\mu_1,\mu_2)$.

2) We have by definition of $\mT_P$ that 
$$\mT_P(\mu, \mu(\cdot + |v|)) = \int_{\Rn} \mu(\mathrm{d}x) \int_{|x-y|\le |v|} P(|y|) \mathrm{d}y \le \int_{\Rn}  |v| P(|x|+|v|) \mu(\mathrm{d}x) \le |v| P(|v|)\int_{\Rn}  P(|x|) \mu(\mathrm{d}x) .$$
  
3) For any transport $f(s,\omega)$ between $\mu =\{$law of $f(0,\omega)\}$ and $\nu=\{$law of $f(1,\omega)\}$, the map $f(s,\omega) -v$ is a transport between $\mu^c$ and $\nu^c$ of price  $$\int_\Omega \int_0^1 P(|f(s,\omega)-v|) |f'_s(s,\omega)| \mathrm{d}s \mathrm{d}P(\omega) \le \sup_{x\ge 0} \frac{P(x+|v|)}{P(x)} \int_\Omega \int_0^1 P(|f(s,\omega)|) |f'_s(s,\omega)| \mathrm{d}s \mathrm{d}P(\omega).$$
The left-hand side is an upper bound for $\mT_P^c(\mu,\nu)$ and passing in the right-hand side to the infimum over all the possible transports $f$, we obtain the desired $\sup_{x\ge 0} \frac{P(x+|v|)}{P(x)} \mT_P(\mu,\nu)$.
 
4) Suppose that $\mu,\nu \in \mathcal{P}_\kappa(\Rn;P)$. Then, by the preceding points, we have
\begin{eqnarray*}
\mT_P (\mu(\cdot + c_\mu),\nu(\cdot +c_\nu)) &\le & \mT_P (\mu(\cdot + c_\mu),\nu(\cdot +c_\mu)) + \mT_P (\nu(\cdot + c_\mu),\nu(\cdot +c_\nu))\\
&\le & P(\vert c_\mu\vert) \mT_P(\mu,\nu) + \vert c_\mu - c_\nu \vert P(\vert c_\mu - c_\nu \vert) \vert\vert \nu(\cdot +c_\nu) \vert\vert_P\\
&\le & P(\vert c_\mu\vert ) \mT_P(\mu,\nu)\\
&+&  P(\vert c_\mu - c_\nu \vert) \frac{1}{C_W} \min (P(\vert c_\mu\vert), P(\vert c_\nu\vert))\|\nu\|_P \mT_P(\mu,\nu).
\end{eqnarray*} 
Remark that, as $\mu, \nu \in \mathcal{P}_\kappa(\Rn;P)$, the norms $|c_\mu|$ and $|c_\nu|$ are uniformly bounded, as well as $\|\nu\|_P$, thus $$\mT_P (\mu(\cdot + c_\mu),\nu(\cdot +c_\nu)) \le  \left(P(\vert c_\mu\vert ) +  P(\vert c_\mu - c_\nu \vert) \frac{1}{C_W} \min (P(\vert c_\mu\vert), P(\vert c_\nu\vert))\|\nu\|_P\right)  \mT_P(\mu,\nu),$$ where $P(\vert c_\mu\vert ) +  P(\vert c_\mu - c_\nu \vert) \frac{1}{C_W} \min (P(\vert c_\mu\vert), P(\vert c_\nu\vert))\|\nu\|_P$ is uniformly bounded by some constant $C_\kappa$, which is the Lipschitz constant.
\end{proof}

\subsection{Free energy functional}\label{s:free-energy}

We recall from \S\ref{s3:free-energy} that the free energy of a measure is defined as $$\mF(\mu) = \mathcal{H}(\mu) + \frac{1}{2}\iint \mu(x) W(x-y) \mu(y) \, \mathrm{d}x \mathrm{d}y,\quad \mathcal{H}(\mu) = \int \mu(x) \log \mu(x) \mathrm{d}x.$$ 
The free energy of a non-self-interacting gas in an exterior potential $V$ is defined as $$\mF_V(\mu) = \mathcal{H}(\mu) + \int \mu(x) V(x) \mathrm{d}x$$ and the map $\Pi$ associates to a measure $\mu$ the probability measure $\frac{1}{Z} e^{-W*\mu(x)} \mathrm{d}x$ (when $W*\mu$ is well-defined).

The first auxiliary statement implies that, as we mentioned it in \S\ref{s3:free-energy}, $\Pi(\mu)$ is the unique global minimum of $\mF_{W*\mu}$.

\begin{lemma}\label{l:fixed-point}
For any potential $V$ such that $e^{-V}$ is integrable, the probability measure $Z^{-1} e^{-V}$ is the unique global minimum of $\mF_V$ on $\mcP(\Rn)$.
\end{lemma}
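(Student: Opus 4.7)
The plan is to recognize $\mathcal{F}_V$, up to an additive constant, as the relative entropy (Kullback--Leibler divergence) $H(\,\cdot \mid \mu^*)$ with respect to the candidate minimizer $\mu^* := Z^{-1} e^{-V}$, which is a well-defined probability density by the integrability hypothesis. Once this identification is made, Gibbs' inequality gives the result immediately.

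Concretely, for any $\mu \in \mathcal{P}(\mathbb{R}^d)$ absolutely continuous with respect to Lebesgue measure (if $\mu$ is not absolutely continuous, the standard convention $\mathcal{H}(\mu) = +\infty$ rules it out as a minimizer since $\mathcal{F}_V(\mu^*) = -\log Z < +\infty$), a direct substitution gives
$$
\mathcal{F}_V(\mu) = \int \mu \log \mu\, \mathrm{d}x + \int V \mu\, \mathrm{d}x = \int \mu \log \frac{\mu}{e^{-V}}\, \mathrm{d}x = \int \mu \log \frac{\mu}{\mu^*}\, \mathrm{d}x - \log Z = H(\mu \mid \mu^*) - \log Z.
$$
I would then apply Jensen's inequality on the probability space $(\mathbb{R}^d, \mu^*)$ to the strictly convex function $\varphi(u) = u \log u$:
$$
H(\mu \mid \mu^*) = \int \varphi\!\left(\frac{\mu}{\mu^*}\right) \mu^*\, \mathrm{d}x \;\geq\; \varphi\!\left(\int \frac{\mu}{\mu^*}\, \mu^*\, \mathrm{d}x\right) = \varphi(1) = 0,
$$
so $\mathcal{F}_V(\mu) \geq -\log Z = \mathcal{F}_V(\mu^*)$. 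Uniqueness follows from the strict convexity of $\varphi$: equality in Jensen's inequality forces $\mu/\mu^*$ to be $\mu^*$-a.e.\ constant, and since both $\mu$ and $\mu^*$ integrate to $1$, that constant must be $1$, whence $\mu = \mu^*$ almost everywhere.

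No serious obstacle is expected --- this is the standard variational characterization of the Gibbs measure. The only mild technicality is to make the identification $\mathcal{F}_V(\mu) = H(\mu \mid \mu^*) - \log Z$ rigorous when $\int V\, \mathrm{d}\mu$ or $\mathcal{H}(\mu)$ could individually be $\pm\infty$; the cleanest fix is to set $\mathcal{F}_V(\mu) = +\infty$ whenever the two integrals do not jointly make sense, and to observe that $\varphi(\mu/\mu^*)\mu^* \geq -\mu^*/e$ pointwise (from $\inf_{u \geq 0} u \log u = -1/e$), so that $H(\mu \mid \mu^*)$ is always well-defined as an element of $[0,+\infty]$ and the chain of (in)equalities above is valid.
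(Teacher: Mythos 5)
Your proof is correct and takes essentially the same route as the paper: rewrite $\mathcal{F}_V(\mu)$ as $\int \rho\log\rho\, \mathrm{d}\mu^* - \log Z$ where $\rho = \mu/\mu^*$ is the density of $\mu$ with respect to the Gibbs measure $\mu^* = Z^{-1}e^{-V}$, and then apply Jensen's inequality to the (strictly) convex function $u\mapsto u\log u$. Your added remarks about the conventions needed to handle non--absolutely--continuous measures and the pointwise lower bound $\varphi(\mu/\mu^*)\mu^*\ge -\mu^*/e$ are sound but beyond what the paper records.
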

\begin{proof}
Let $\mu = Z^{-1} e^{-V}$. Then, for any arbitrary absolutely continuous measure $\nu$, letting $\rho(x) = Z e^{V(x)} \nu(x)$ be its density with respect to $\mu$, we see that 
\begin{equation*}
\mF_{V}(\nu)= \int_{\Rn} (V(x) + \log \nu(x)) \nu(\mathrm{d}x) = \int_{\Rn} (\log \rho(x) - \log Z) \nu(\mathrm{d}x) = \int_{\Rn} \rho(x) \log \rho(x) \mu(\mathrm{d}x) - \log Z,
\end{equation*} 
and thus Jensen's inequality, for the convex function $\rho \log \rho$, leads immediately to the conclusion.
\end{proof}

Now, for the free energy functional, McCann~\cite{MC} proved the following
\begin{proposition}[McCann]
There exists a centered symmetric density $\rho_\infty$, which is a unique, up to translation, global minimum of $\mF$. Moreover, $\mF$ is a displacement convex functional, that is for two probability measures $\mu_0,\mu_1$ and the Wasserstein-optimal transport between them $$\xi_s= (1-s) \xi_0 +s\xi_1,$$ where $\mu_0 = \{\text{law of} \ \xi_0\}$, $\mu_1 = \{\text{law of} \ \xi_1\}$, $\mathbb{E}|\xi_0-\xi_1|^2 = W_2^2(\mu_0,\mu_1)$, one has $$\mF(\{\text{law of}\ \xi_s\}) \ge (1-s) \mF(\mu_0) + s \mF(\mu_1).$$
Finally, the transport distance from a centered measure $\mu$ to $\rho_\infty$ can be estimated as 
\begin{equation}\label{eq:W2F}
W_2^2 (\mu,\rho_\infty) \le \frac{2}{C_W} \mF(\mu|\rho_\infty),
\end{equation}
where $\mF(\mu|\rho_\infty) = \mF(\mu) -\mF(\rho_\infty)$. 
\end{proposition}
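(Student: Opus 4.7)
The statement is essentially McCann's theorem \cite{MC}, and my plan is to reproduce its three parts: displacement convexity, existence/uniqueness of the minimum, and the Talagrand-type inequality \eqref{eq:W2F} with modulus $C_W$. The unifying tool is the optimal-transport interpolation: given $\mu_0,\mu_1\in\mcP(\Rn;P)$ and the monotone (Brenier) transport $T$ from $\mu_0$ to $\mu_1$, one sets $T_s=(1-s)\mathrm{id}+sT$, $\mu_s=(T_s)_*\mu_0$, and studies $\mF(\mu_s)$.

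For displacement convexity, I would treat the two pieces of $\mF$ separately. For the entropy, the change of variables $\mu_0(x)=\mu_s(T_s(x))\det DT_s(x)$ combined with concavity of $A\mapsto(\det A)^{1/d}$ on symmetric positive matrices yields $\mathcal{H}(\mu_s)\le (1-s)\mathcal{H}(\mu_0)+s\mathcal{H}(\mu_1)$, exactly as in McCann's original argument. For the interaction term, take an independent copy $(\xi_0',\xi_1')$ of the coupling $(\xi_0,\xi_1)$ realizing the optimal transport, and set $\xi_s'=(1-s)\xi_0'+s\xi_1'$. Then $\xi_s-\xi_s'=(1-s)(\xi_0-\xi_0')+s(\xi_1-\xi_1')$, so uniform $C_W$-convexity of $W$ gives pointwise
$$W(\xi_s-\xi_s')\le (1-s)W(\xi_0-\xi_0')+sW(\xi_1-\xi_1')-\tfrac{C_W}{2}s(1-s)\bigl|(\xi_1-\xi_1')-(\xi_0-\xi_0')\bigr|^2.$$
Halving the expectation provides displacement convexity of $\mu\mapsto\tfrac12\iint W(x-y)\mu(dx)\mu(dy)$ with a quantitative modulus. (Note the direction of the inequality in the statement is a typo: the correct sense, as used in \eqref{eq:W2F}, is the $\le$ one.)

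Existence of a minimizer follows from a direct method. The interaction term, by uniform convexity of $W$, controls $\int|x-c_\mu|^2\mu(dx)$, so along any minimizing sequence of centered measures, tightness in $\mcP_\kappa(\Rn;P)$ is automatic, and weak lower semicontinuity of $\mF$ yields a minimizer $\rho_\infty$. Uniqueness modulo translation is immediate from strict displacement convexity: two distinct centered minimizers would admit a displacement midpoint of strictly smaller energy, a contradiction. Spherical symmetry of $\rho_\infty$ is inherited from that of $W$: since $\mF$ commutes with orthogonal rotations, $R_*\rho_\infty$ is again a centered minimizer for every $R\in O(d)$, hence equals $\rho_\infty$ by uniqueness.

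For the Talagrand-type inequality \eqref{eq:W2F}, interpolate by displacement from $\mu_0=\rho_\infty$ to $\mu_1=\mu$, after translating $\mu$ so that it shares the center of mass of $\rho_\infty$; this preserves $\mF$ and can only decrease $W_2^2(\mu,\rho_\infty)$, so it is harmless. Under this alignment $\mathbb{E}(\xi_1-\xi_0)=0$, and the pointwise modulus computed above telescopes into
$$\mathbb{E}\bigl|(\xi_1-\xi_1')-(\xi_0-\xi_0')\bigr|^2=2\,W_2^2(\rho_\infty,\mu).$$
Combining with the displacement-convex entropy bound gives
$$\mF(\mu_s)\le (1-s)\mF(\rho_\infty)+s\mF(\mu)-\tfrac{C_W}{2}s(1-s)\,W_2^2(\rho_\infty,\mu),$$
and the minimality $\mF(\mu_s)\ge\mF(\rho_\infty)$ lets us rearrange, divide by $s$, and send $s\to 0$ to conclude. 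The main obstacle is precisely this last step: the naive modulus produced by the convexity of $W$ involves $\mathbb{E}|(\xi_1-\xi_1')-(\xi_0-\xi_0')|^2$ rather than $W_2^2$, and the gap $2|\mathbb{E}(\xi_1-\xi_0)|^2$ is only killed after the preliminary re-centering; this translation-invariance manoeuvre is what makes the constant $C_W$ in \eqref{eq:W2F} sharp.
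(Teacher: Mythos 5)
The paper does not prove this proposition; it quotes it as McCann's theorem from~\cite{MC}, with only the short Remark that follows giving a hint toward uniqueness. Your reconstruction follows McCann's own argument (also used by Carrillo--McCann--Villani~\cite{CMV}) and is correct: displacement convexity of the entropy via concavity of $A\mapsto(\det A)^{1/d}$, displacement convexity of the interaction energy with an explicit $C_W$-modulus via independent copies and the pointwise uniform convexity of $W$, and then the Talagrand-type estimate~\eqref{eq:W2F} by interpolating from $\rho_\infty$ to $\mu$, using minimality and sending $s\to 0$. You are also right that the $\ge$ in the displacement-convexity display of the proposition is a typo and should read~$\le$.

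One small logical slip worth flagging in your write-up of~\eqref{eq:W2F}: you first re-center $\mu$ to share the center of mass of $\rho_\infty$, prove the estimate for the re-centered $\tilde\mu$, and then invoke that $W_2^2(\tilde\mu,\rho_\infty)\le W_2^2(\mu,\rho_\infty)$ to declare the step ``harmless.'' That implication goes the wrong way: a bound on the smaller quantity $W_2^2(\tilde\mu,\rho_\infty)$ does not bound $W_2^2(\mu,\rho_\infty)$. What saves you is simply that the proposition only asserts~\eqref{eq:W2F} for a \emph{centered} $\mu$, so $\mu$ and $\rho_\infty$ already share their center of mass and no translation is required; the identity $\mathbb{E}\bigl|(\xi_1-\xi_1')-(\xi_0-\xi_0')\bigr|^2=2W_2^2(\rho_\infty,\mu)$ then holds directly because $\mathbb{E}(\xi_1-\xi_0)=0$. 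Drop the re-centering manoeuvre and instead state up front that the alignment is part of the hypothesis.
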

\begin{remark}
i) The uniqueness of the minimum comes from the strict displacement convexity of the restriction to the space of centered measures.

ii) The functional $\mF$ is \textit{not} convex in the usual sense, due to the self-interacting part.

iii) Inequality~\eqref{eq:deriv-free} together with Lemma~\ref{l:fixed-point} immediately imply that the minimum of $\mF$ is also a fixed point of $\Pi$.
\end{remark}

Finally, as we are going to work in \S\ref{sss:dec-energy} with the discretized flow, we will need two auxiliary statements for the free energy:
\begin{lemma}\label{l:equality-varphi}
For all absolutely continuous measures $\mu,\nu\in \mcP(\Rn;P)$ of finite free energy and for all $\lambda \in [0,1]$, we have 
\begin{multline}\label{eq:lem}
\mF((1-\lambda)\mu + \lambda \nu|\rho_\infty) \le \mF(\mu|\rho_\infty) - \lambda (\varphi_\mu(\mu)-\varphi_\mu(\nu))+\\ 
+ \frac{\lambda^2}{2} \iint (\mu-\nu)(x) W(x-y) (\mu-\nu)(y) \, \mathrm{d}x \mathrm{d}y,
\end{multline} 
where $\varphi_\mu(\cdot) := \mF_{W*\mu}(\cdot)$ is the free energy in the $\mu$-generated potential.

Moreover, for all absolutely continuous $\mu\in \mcP(\Rn;P)$, we have 
\begin{equation}\label{eq:lem-varphi}
\varphi_\mu(\mu) -\varphi_\mu(\nu) = \mF(\mu) -\mF(\nu) + \frac{1}{2} \iint (\mu-\nu)(x) W(x-y) (\mu-\nu)(y) \, \mathrm{d}x \mathrm{d}y.
\end{equation}
\end{lemma}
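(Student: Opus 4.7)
The plan is to handle the two claims separately, as the identity \eqref{eq:lem-varphi} feeds directly into the proof of \eqref{eq:lem}. Both reduce to bookkeeping with the quadratic form $B(\mu,\nu):=\iint W(x-y)\,\mu(x)\nu(y)\,\mathrm{d}x\,\mathrm{d}y$, which is symmetric because $W(x)=W(-x)$ by the spherical symmetry assumption.

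For the identity \eqref{eq:lem-varphi}, I would simply expand both sides. By definition,
\[
\varphi_\mu(\mu)-\varphi_\mu(\nu)=\mathcal{H}(\mu)-\mathcal{H}(\nu)+B(\mu,\mu)-B(\mu,\nu),
\]
while
\[
\mF(\mu)-\mF(\nu)=\mathcal{H}(\mu)-\mathcal{H}(\nu)+\tfrac{1}{2}B(\mu,\mu)-\tfrac{1}{2}B(\nu,\nu).
\]
Subtracting and using $B(\mu,\nu)=B(\nu,\mu)$, the difference collapses to $\tfrac{1}{2}\bigl(B(\mu,\mu)-2B(\mu,\nu)+B(\nu,\nu)\bigr)=\tfrac{1}{2}B(\mu-\nu,\mu-\nu)$, which is the claimed formula.

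For the inequality \eqref{eq:lem}, set $m_\lambda:=(1-\lambda)\mu+\lambda\nu$ and split $\mF(m_\lambda)=\mathcal{H}(m_\lambda)+\tfrac{1}{2}B(m_\lambda,m_\lambda)$. The interaction part admits an exact "Taylor" expansion by bilinearity and symmetry:
\[
\tfrac{1}{2}B(m_\lambda,m_\lambda)=\tfrac{1}{2}B(\mu,\mu)+\lambda B(\mu,\nu-\mu)+\tfrac{\lambda^2}{2}B(\nu-\mu,\nu-\mu).
\]
For the entropy, convexity of $t\mapsto t\log t$ on $(0,\infty)$, applied pointwise and integrated, gives
\[
\mathcal{H}(m_\lambda)\le(1-\lambda)\mathcal{H}(\mu)+\lambda\mathcal{H}(\nu)=\mathcal{H}(\mu)-\lambda\bigl(\mathcal{H}(\mu)-\mathcal{H}(\nu)\bigr).
\]
Adding these two estimates and collecting terms yields
\[
\mF(m_\lambda)\le \mF(\mu)-\lambda\bigl(\mathcal{H}(\mu)-\mathcal{H}(\nu)+B(\mu,\mu)-B(\mu,\nu)\bigr)+\tfrac{\lambda^2}{2}B(\mu-\nu,\mu-\nu).
\]
The bracketed coefficient of $\lambda$ is exactly $\varphi_\mu(\mu)-\varphi_\mu(\nu)$ by the computation used for \eqref{eq:lem-varphi}. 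Subtracting $\mF(\rho_\infty)$ on both sides converts $\mF$ to $\mF(\cdot|\rho_\infty)$ and gives the claimed bound.

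There is essentially no real obstacle; the only thing to be careful about is to justify that every integral in sight is finite, which is where the hypothesis that $\mu,\nu\in\mcP(\Rn;P)$ have finite free energy enters (it guarantees both the integrability of $W(x-y)\,\mathrm{d}\mu(x)\,\mathrm{d}\mu(y)$ via the polynomial domination \eqref{eq:domination} and that Fubini applies, which in turn legitimizes the symmetry swap $B(\mu,\nu)=B(\nu,\mu)$). Once that is in place, everything else is a one-line convexity inequality for the entropy plus algebra.
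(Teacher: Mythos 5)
Your proof is correct and follows essentially the same route as the paper: convexity of $t\mapsto t\log t$ to bound the entropy of the mixture, then an exact quadratic (Taylor) expansion of the bilinear interaction term $B(\cdot,\cdot)$ to handle the rest, with \eqref{eq:lem-varphi} being the same algebra with entropy cancelling. The paper states this tersely ("after removing the entropy terms, the formula becomes a Taylor expansion for a degree two polynomial"), whereas you carry out the bookkeeping explicitly, but the underlying argument is identical.
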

\begin{proof}
Note that $\mathcal{H}((1-\lambda)\mu + \lambda \nu) \le (1-\lambda)\mathcal{H}(\mu) + \lambda \mathcal{H}(\nu) = \mathcal{H}(\mu) -\lambda \left(\mathcal{H}(\mu) - \mathcal{H}(\nu)\right)$. So, it suffices to prove~\eqref{eq:lem} with entropy terms removed form both sides (from both $\mF$ and $\varphi_\mu$ in the right-hand side). After this removing, the formula becomes a Taylor expansion for a degree two polynomial. The same holds for~\eqref{eq:lem-varphi}, with a remark that the entropy terms are exactly the same in both sides.
\end{proof}

\section{Proofs}\label{s:proof}

\subsection{Proof of Theorem~\ref{t:centered}}\label{s:discretization}

In fact, we will prove a stronger statement, controlling the speed of convergence in the sense of the transport distance:

\begin{proposition}\label{p:exp-conv}
There exists $a>0$ such that almost surely, as $t\to\infty$, 
$$
\mT_P(\mu^c_{t},\rho_\infty) = O\left( e^{-a\sqrt[k+1]{\log t}} \right),
$$ 
where $k$ is the degree of the polynomial $P$, as well as 
$$
W_2(\mu^c_{t},\rho_\infty) = O\left( e^{-a\sqrt[k+1]{\log t}} \right).
$$
\end{proposition}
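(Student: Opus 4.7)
The plan is to make rigorous the scheme sketched in \S\ref{ss:outline}. I would fix a target time $t$, an earlier time $t_0 \ll t$, and a deterministic grid $t_0 = T_0 < T_1 < \cdots < T_N = t$. At time $t_0$ I replace the random measure $\mu_{t_0}$ by an absolutely continuous smoothing $\tilde\mu_{t_0}$ of finite free energy (say, by convolving with a small Gaussian kernel) at small cost in $\mT_P$-distance; by Proposition~\ref{p:expo-decrease}, this smoothing still belongs to some $K_{\alpha,C}$ with almost surely finite $C$. I then run the deterministic Euler scheme~\eqref{eq:discr} to produce $\tilde\mu_{T_1}, \ldots, \tilde\mu_{T_N} = \tilde\mu_t$ and decompose
\begin{equation*}
\mT_P(\mu_t^c, \rho_\infty) \le \mT_P(\mu_t^c, \tilde\mu_t^c) + \mT_P(\tilde\mu_t^c, \rho_\infty),
\end{equation*}
bounding the two pieces separately.

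For the deterministic term $\mT_P(\tilde\mu_t^c, \rho_\infty)$ I iterate Lemma~\ref{l:equality-varphi} along the grid. Setting $\lambda_n := (T_{n+1}-T_n)/T_{n+1}$ and $\Delta_n := \varphi_{\tilde\mu_{T_n}}(\tilde\mu_{T_n}) - \varphi_{\tilde\mu_{T_n}}(\Pi(\tilde\mu_{T_n})) \ge 0$ (nonnegative because $\Pi(\tilde\mu_{T_n})$ minimizes $\varphi_{\tilde\mu_{T_n}}$ by Lemma~\ref{l:fixed-point}), the lemma yields the descent inequality
\begin{equation*}
\mF(\tilde\mu_{T_{n+1}}|\rho_\infty) \le \mF(\tilde\mu_{T_n}|\rho_\infty) - \lambda_n \Delta_n + \lambda_n^2 R_n,
\end{equation*}
with residual $R_n$ uniformly bounded thanks to the exponential tails of both measures (Propositions~\ref{p:expo-decrease} and~\ref{prop:exp-decrease}) together with the polynomial growth~\eqref{eq:domination} of $W$. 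The critical new ingredient is a quantitative lower bound $\Delta_n \ge c\, \mF(\tilde\mu_{T_n}|\rho_\infty)$, uniform over measures with controlled exponential tails; I plan to deduce it from the identity~\eqref{eq:lem-varphi} combined with a Csisz\'ar--Kullback / logarithmic Sobolev inequality for $\tilde\mu_{T_n}$ relative to $\Pi(\tilde\mu_{T_n})$, and close the loop back to $\rho_\infty$ via McCann's bound~\eqref{eq:W2F}. Iterating then gives explicit decay of $\mF(\tilde\mu_{T_n}|\rho_\infty)$, which transfers to $\mT_P(\tilde\mu_t^c, \rho_\infty)$ through~\eqref{eq:W2F} and the $W_2^2 \leftrightarrow \mT_P$ comparison on $K_{\alpha,C_0}$ established in \S\ref{ss:invariant}.

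For the stochastic term $\mT_P(\mu_t^c, \tilde\mu_t^c)$ I would track, interval by interval, the discrepancy between the true empirical piece $\mu_{[T_n, T_{n+1}]}$ and the frozen equilibrium $\Pi(\mu_{T_n})$. The Ornstein--Uhlenbeck comparison of Proposition~\ref{p:OU-comparison} together with a quantitative Birkhoff-type concentration estimate for the frozen SDE $\mathrm{d}Y_s = \sqrt{2}\,\mathrm{d}B_s - \nabla(W*\mu_{T_n})(Y_s)\,\mathrm{d}s$ will give the requisite control, and the Lipschitz bounds of Lemmas~\ref{Pibound} and~\ref{l:mT_P} allow summation across $n$. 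Finally I would optimize the grid: with a choice $\log T_n \sim n$ the free-energy descent becomes (nearly) geometric in $n$, while the terms $R_n$ and the stochastic fluctuations introduce errors scaling polynomially through the $P$-norms of the supports (degree $k$); balancing these two effects forces the subgeometric rate $\exp(-a\sqrt[k+1]{\log t})$, and the $W_2$-bound follows directly from $W_2^2 \le \frac{2}{C_W} \mF(\cdot|\rho_\infty)$. The main obstacle will be this last balancing: the equilibration rate of the frozen SDE is controlled by a Poincar\'e / log-Sobolev constant depending on $\mu_{T_n}$, so uniformity of this rate over $\mu \in K_{\alpha,C}$ is essential, and the grid must be dense enough for the Euler step to stay accurate yet sparse enough for effective equilibration, so that the cumulative stochastic error never overtakes the (subgeometric) free-energy decay.
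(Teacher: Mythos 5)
Your overall architecture — smooth $\mu_{t_0}$, run the deterministic Euler iteration, control the stochastic discrepancy via OU comparison plus concentration for the frozen SDE, then balance — matches the paper's proof. But the central quantitative claim you make about the free-energy descent is false, and this is precisely where the subgeometric rate $e^{-a\sqrt[k+1]{\log t}}$ comes from.

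You propose the lower bound $\Delta_n \ge c\,\mF(\tilde\mu_{T_n}|\rho_\infty)$ with a constant $c>0$ uniform over measures with controlled exponential tails. This is what the paper's Lemma~\ref{l:maj-varphi} does \emph{not} give, and for a structural reason. The quantity $\Delta_n=\varphi_\mu(\mu)-\varphi_\mu(\Pi(\mu))$ equals the relative entropy $\mathcal{H}(\mu\,|\,\Pi(\mu))$. A logarithmic Sobolev / Talagrand inequality for $\Pi(\mu)$ (valid here with constant $2C_W$) gives you $\mathcal{H}(\mu\,|\,\Pi(\mu))\ge \frac{C_W}{2}W_2^2(\mu,\Pi(\mu))$, and combined with a $W_2$-contraction of $\Pi$ you could get down to $\ge c\,W_2^2(\mu,\rho_\infty)$. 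But to then ``close the loop back to $\rho_\infty$ via McCann's bound \eqref{eq:W2F}'' you need the \emph{reverse} of \eqref{eq:W2F}, namely $\mF(\mu|\rho_\infty)\le C\,W_2^2(\mu,\rho_\infty)$, and that is false: a measure can be $W_2$-close to $\rho_\infty$ while having arbitrarily large entropy, hence arbitrarily large $\mF$. What the paper proves instead (Lemma~\ref{l:maj-varphi}, via displacement interpolation and a ball-truncation of the degree-$k$ potential $W$) is the weaker
$$
\varphi_\mu(\mu)-\varphi_\mu(\Pi(\mu)) \ge \frac{C_7}{|\log \mF(\mu|\rho_\infty)|^k}\,\mF(\mu|\rho_\infty)
$$
for small $\mF$. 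The $|\log E|^k$ loss is forced by the polynomial growth of $W$ when optimizing the truncation radius against the displacement parameter, and feeding this into the descent ODE $\dot y = -g(y)/2$ (in logarithmic time) is exactly what yields $y\sim \exp\{-\text{const}\cdot\sqrt[k+1]{\log t}\}$. With your linear bound the rate would be polynomial in $t$, which the authors explicitly do \emph{not} claim; this is the tell that the linear bound cannot be right.

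The grid choice is the second problem. You propose $\log T_n\sim n$, i.e.\ $T_n\sim e^n$, which makes $\lambda_n=\Delta T_n/T_{n+1}$ bounded away from $0$. Then the second-order term $\lambda_n^2 R_n$ in the descent inequality does not vanish, and the recursion $\mF_{n+1}\le(1-c\lambda_n)\mF_n+\lambda_n^2 R_n$ stabilizes at a positive level rather than converging to $0$. Moreover the Lipschitz propagation constants in Proposition~\ref{p:Euler-smooth} accumulate as $(T_j/T_i)^A$, which with a geometric grid destroys the error estimates. The paper takes $T_n=n^{3/2}$ so that $\lambda_n\sim 1/n$ (hence $\sum\lambda_n=\infty$ but $\sum\lambda_n^2<\infty$, the right regime for a Robbins--Monro / Euler scheme), while $\Delta T_n\sim n^{1/2}$ is still large enough for the Cattiaux--Guillin concentration to beat the Euler error. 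Both gaps — the missing logarithmic-loss lemma and the too-coarse grid — must be fixed; they are linked, since once you accept $g(E)\sim E/|\log E|^k$, a grid with $\lambda_n\sim 1/n$ is the natural choice and the $\sqrt[k+1]{\log t}$ rate emerges from solving the resulting ODE.
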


The proof of this statement will be decomposed into several propositions. We first present them all, postponing their proofs; then deduce from them Proposition~\ref{p:exp-conv}. Finally, we prove these propositions.

In order to prove this statement, as it was announced in \S\ref{ss:outline}, we will discretize the random process. Namely, we define the sequence $T_n$ of moments of time as $T_n:=n^{3/2}$ and then, $\Delta T_n:=T_{n+1}-T_n$ is of order $T_n^{1/3}$. Also, for what follows, we will associate to a random trajectory $(X_t,t\ge 0)$ thz sequence $(L_n)$ defined as
\begin{equation}\label{eq:def-L_n}
L_n:=\max\limits_{0\le t \leq T_{n+1}} |X_t -c_{T_n}|\le C_3'\log T_n.
\end{equation}
An easy conclusion from the Ornstein-Uhlenbeck comparison~\S\ref{ss:OU} and logarithmic drift of the center is that almost surely $L_n \le C_3' \log n$ and $L_n' \le \log n$ for any $n$ large enough.

Now, let us state the first of the propositions mentioned above, the one allowing to estimate the ``Euler-method'' one-step error in the description of the behaviour of measures $\mu_t$:

\begin{proposition}\label{p:one-step}
Almost surely there exists $n_0$ such that 
for any $n\ge n_0$, we have
$$
\mT_P^{c_{T_n}} (\mu_{[T_n, T_{n+1}]}, \Pi(\mu_{T_n})) \le (\Delta T_n)^{-\beta},
$$
where $\beta=\min \left(8C_W,\frac{1}{5d} \right) $.
\end{proposition}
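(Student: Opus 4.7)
My plan is to prove Proposition~\ref{p:one-step} by the frozen-coefficient approximation sketched in~\S\ref{ss:outline}: conditionally on $\mathcal{F}_{T_n}$, I introduce on $[T_n,T_{n+1}]$ the reference diffusion
\begin{equation*}
\mathrm{d}Y_t = \sqrt{2}\,\mathrm{d}B_t - \nabla W*\mu_{T_n}(Y_t)\,\mathrm{d}t,\qquad Y_{T_n}=X_{T_n},
\end{equation*}
driven by the \emph{same} Brownian motion. This is a reversible diffusion whose stationary measure is exactly $\Pi(\mu_{T_n})$; since $W*\mu_{T_n}$ is uniformly convex with constant $\ge C_W$, the Bakry-\'Emery criterion yields a spectral gap $\ge C_W$ and a Gaussian concentration inequality for time averages. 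Splitting the target by the triangle inequality into a pathwise piece $\mT_P^{c_{T_n}}(\mu_{[T_n,T_{n+1}]},\mu^Y_{[T_n,T_{n+1}]})$ and an ergodic piece $\mT_P^{c_{T_n}}(\mu^Y_{[T_n,T_{n+1}]},\Pi(\mu_{T_n}))$, I would treat each in turn.

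The pathwise piece is the easy one. From $\dot{\mu}_t = t^{-1}(\delta_{X_t}-\mu_t)$ together with $|X_t-c_{T_n}|\le L_n = O(\log T_n)$ and the polynomial domination~\eqref{eq:domination}, I obtain
\begin{equation*}
\sup_{x,\,t\in[T_n,T_{n+1}]}|\nabla W*\mu_t(x)-\nabla W*\mu_{T_n}(x)| \le \frac{C\,\Delta T_n}{T_n}\,P(|x|+L_n),
\end{equation*}
and a Gr\"onwall argument then gives $\sup_{t}|X_t-Y_t| \le C(\Delta T_n/T_n)\cdot\mathrm{poly}(\log T_n)$; via Lemma~\ref{l:mT_P} this translates into a $\mT_P^{c_{T_n}}$-distance between the empirical measures that is easily dominated by $(\Delta T_n)^{-\beta}$, since $\Delta T_n\sim T_n^{1/3}$.

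The ergodic piece is the substantive part. The spectral gap yields, for any smooth $\phi$, the Poincar\'e-type variance bound
\begin{equation*}
\mathrm{Var}\!\left(\frac{1}{\Delta T_n}\int_{T_n}^{T_{n+1}}\phi(Y_s)\,\mathrm{d}s\right) \le \frac{C\,\|\nabla\phi\|_\infty^2}{C_W\,\Delta T_n},
\end{equation*}
and a matching Gaussian tail inequality at scale $(C_W\Delta T_n)^{-1/2}$. To convert this single-function control into a bound in the \emph{metric} $\mT_P^{c_{T_n}}$, I would (i) truncate outside a ball $B(c_{T_n},R_n)$ with $R_n\sim (\log\Delta T_n)^{1/(k+1)}$, using Proposition~\ref{p:expo-decrease} for $\Pi(\mu_{T_n})$ and the Ornstein-Uhlenbeck comparison (Proposition~\ref{p:OU-comparison}) for $\mu^Y_{[T_n,T_{n+1}]}$ to absorb the polynomial weight $P$ in the tail, and (ii) cover the unit ball of weighted Lipschitz test functions on $B(c_{T_n},R_n)$ by an $\epsilon$-net of cardinality $\exp(C(R_n/\epsilon)^d)$ and union-bound the concentration inequality across the net. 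The main obstacle is the balance in (ii), which is the source of the specific exponent $\beta=\min(8C_W,\tfrac{1}{5d})$: the concentration exponent scales linearly with $C_W$ (producing the $8C_W$ branch of $\beta$, after absorbing constants from the net logarithm and the weight), while the dimensional cost of the $\epsilon$-net on a growing ball in $\mathbb{R}^d$ forces the complementary $1/(5d)$ branch. Borel-Cantelli along the subsequence $T_n=n^{3/2}$ then upgrades the resulting tail estimates into the desired a.s.\ bound for all $n\ge n_0$.
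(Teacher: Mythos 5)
Your overall strategy matches the paper's: freeze the measure at $\mu_{T_n}$ on $[T_n,T_{n+1}]$, couple the true process $X$ and the reference process $Y$ through the same Brownian motion, control $|X_t-Y_t|$ by a Gr\"onwall argument that exploits uniform convexity, and handle the remaining ergodic piece by a concentration estimate for time averages of $Y$ together with a triangle inequality in $\mT_P^{c_{T_n}}$. Two concrete choices in your sketch, however, would not go through as stated.

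First, you initialize $Y_{T_n}=X_{T_n}$. The concentration inequality you invoke (of Cattiaux--Guillin type, which is exactly what the paper uses) carries a prefactor $\bigl\|\frac{\mathrm{d}\nu}{\mathrm{d}m}\bigr\|_{L^2(m)}$ where $\nu$ is the initial law of $Y$ and $m=\Pi(\mu_{T_n})$; with a deterministic (Dirac) initial point this prefactor is infinite, so the bound is vacuous. The paper resolves this by drawing $Y_{T_n}$ from $\Pi(\mu_{T_n})$ restricted to the unit ball around $c_{T_n}$ (independently of $X$ and of the past), which makes the $L^2$ prefactor a uniform constant; the Gr\"onwall inequality then contracts the random initial gap $|X_{T_n}-Y_{T_n}|=O(\log T_n)$ at rate $e^{-C_W(t-T_n)}$, so the pathwise term is still $o(T_n^{-1/5})$.

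Second, your truncation radius $R_n\sim(\log\Delta T_n)^{1/(k+1)}$ is far too small. The tails of $\Pi(\mu_{T_n})$ and of the occupation measure decay like $e^{-C_W R_n}$ times a polynomial, and with your choice $e^{-C_W R_n}=\exp\bigl(-C_W(\log\Delta T_n)^{1/(k+1)}\bigr)$ is \emph{not} a negative power of $\Delta T_n$; it tends to zero slower than any $(\Delta T_n)^{-\varepsilon}$. The paper takes $R_n=3L_n\sim 3\log T_n$, which gives $e^{-C_W R_n}\sim T_n^{-3C_W}\ll(\Delta T_n)^{-8C_W}$; this tail truncation, not the concentration exponent as you suggest, is what produces the $8C_W$ branch of $\beta$. (You seem to have transplanted the $\sqrt[k+1]{\log}$ scaling from the final theorem's convergence rate, where it has a different origin.) As for the dimensional branch, your $\epsilon$-net over weighted Lipschitz test functions is a legitimate dual alternative to the paper's cell partition of $\mathcal{U}_{R_n}(c_{T_n})$ into $N_n\sim(\Delta T_n)^{3/10}$ boxes and could in fact yield a slightly better exponent than $1/(5d)$, but you would still need to carry out the balance; the statement only asserts $\beta=\min(8C_W,1/(5d))$, so either route is acceptable once the two issues above are repaired.
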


Associated to the moments of time $T_n$, consider the following, roughly speaking, Euler-approximation maps for the flow $\dot{m}=\frac{1}{t}(\Pi(m)-m)$, with the knots chosen at the 
moments~$T_n$:
\begin{definition}\label{def:Phi}
For any $i\le j$, define $\Phi_i^j : \mcP(\Rn,P) \to \mcP(\Rn,P)$ as
$$
\Phi_i^i = id, \quad \Phi_i^{i+1}(\mu) = \mu + \frac{\Delta T_i}{T_{i+1}} (\Pi(\mu)-\mu), \quad \Phi_i^j= \Phi_{j-1}^j \circ \dots \circ \Phi_i^{i+1}.
$$
\end{definition}
Let us first exhibit an invariant set for $\Phi$.

\begin{lemma}\label{l:K-alpha-C}
For any $\alpha,C$ as in Lemma~\ref{l:sep-measure}, corresponding to $\alpha_0=C_W$ and $C_0=C_\Pi$ (from Proposition~\ref{prop:exp-decrease}), if $\mu\in K_{\alpha,C}$ and $i\le j$, then $\Phi_i^j(\mu) \in K_{\alpha,C}$.
\end{lemma}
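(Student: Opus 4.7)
The plan is to proceed by induction on $j-i$, with Lemma~\ref{l:sep-measure} serving as the engine for the inductive step and Proposition~\ref{prop:exp-decrease} furnishing exactly the exponential-tail hypothesis that lemma requires.

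The base case $j=i$ is immediate since $\Phi_i^i=\mathrm{id}$. For the inductive step, I would set $\tilde\mu := \Phi_i^j(\mu)$, which by the induction hypothesis lies in $K_{\alpha,C}$, and write
\[
\Phi_i^{j+1}(\mu) \;=\; \Phi_j^{j+1}(\tilde\mu) \;=\; (1-\lambda_j)\,\tilde\mu \;+\; \lambda_j\,\Pi(\tilde\mu),\qquad \lambda_j := \frac{\Delta T_j}{T_{j+1}}.
\]
Since $T_n=n^{3/2}$, we have $\lambda_j<1/2$ for all $j\ge 2$; the finitely many earlier indices can be absorbed into the constants (or the statement read modulo an initial transient).

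Next I would invoke Lemma~\ref{l:sep-measure} with its internal ``$\mu$'' set to $\tilde\mu$, its ``$\nu$'' set to $\Pi(\tilde\mu)$, and its coefficient set to $\lambda_j$. The hypothesis $\nu(\cdot+c_{\tilde\mu})\in K_{\alpha_0,C_0}^0$ is delivered directly by Proposition~\ref{prop:exp-decrease} with the stipulated choice $\alpha_0=C_W$, $C_0=C_\Pi$. The decomposition hypothesis on $\tilde\mu$ is satisfied by the trivial splitting $\mu^{(1)}:=0$, $\mu^{(2)}:=\tilde\mu$, since the inductive assumption gives $\tilde\mu(\cdot+c_{\tilde\mu})\in K_{\alpha,C}^0\subset \tilde{K}_{\alpha,C}^0$. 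The conclusion of Lemma~\ref{l:sep-measure} then tells us that the ``big part'' of the mixture---which under this trivial splitting is the entire mixture $\Phi_i^{j+1}(\mu)$---after being centered by its own center, lies in $\tilde{K}_{\alpha,C}^0$; since it is a probability measure, this is exactly $\Phi_i^{j+1}(\mu)\in K_{\alpha,C}$, closing the induction.

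I do not anticipate any real obstacle: the substantive analytic work---showing that $\Pi$ uniformly produces $C_W$-exponential tails around $c_\mu$, and that a $(1-\lambda_j,\lambda_j)$-mixing preserves the exponential-tails property of the centered measure---is already packaged into Proposition~\ref{prop:exp-decrease} and Lemma~\ref{l:sep-measure} respectively. The only point that deserves momentary care is the smallness condition $\lambda_j<1/2$ required to invoke Lemma~\ref{l:sep-measure}, which is handled via the observation above.
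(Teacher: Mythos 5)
Your proof is correct and is essentially the same argument the paper has in mind: the paper simply asserts ``This is a direct corollary of Lemma~\ref{l:sep-measure},'' and you have spelled out the induction on $j$, with the trivial splitting $\mu^{(1)}=0$, $\mu^{(2)}=\Phi_i^j(\mu)$ and Proposition~\ref{prop:exp-decrease} supplying the $K_{\alpha_0,C_0}^0$ hypothesis for $\Pi(\Phi_i^j(\mu))$. Your observation about $\lambda_j=\Delta T_j/T_{j+1}<1/2$ only from $j\ge 2$ is a real (if tiny) caveat that the paper elides; since $\Phi_i^j$ is only ever invoked with large $i$ in the proofs of Propositions~\ref{p:Euler-smooth} and \ref{p:exp-decrease}, it is harmless, as you note.
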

\begin{proof}
This is a direct corollary of Lemma~\ref{l:sep-measure}.
\end{proof}

Denote, for a probability measure $\mu$ and for a number $h>0$, by $\mu^{(h)}$ the ``smoothening convolution''
$$
\mu^{(h)}:=\mu *  \left(\frac{1}{vol(\mathcal{U}_h(0))} \cdot  \1_{\mathcal{U}_h(0)} \, \mathrm{d}x \right),
$$
where $\mathcal{U}_h(0)$ is the radius $h$ ball in~$\Rn$, centered at the origin.

The following proposition allows to compare the deterministic Euler-like behaviour of the smoothened, at some moment $T_i$, measure with the true random trajectory:

\begin{proposition}\label{p:Euler-smooth}
There exist some constants $A, C_1, C_2, C_3>0$ such that almost surely there exists $n_0$ for which the following statements hold. For any $j> i \ge n_0$ and any $h>0$, 
\begin{equation}\label{eq:sum}
\mT_P^{c_{T_j}} (\Phi_i^j(\mu_{T_i}^{(h)}), \mu_{T_j}) \le \sum_{k=i}^{j-1} \frac{\Delta T_k}{T_{k+1}} (\Delta T_{k})^{-\beta} \left(\frac{T_j}{T_k} \right)^A + C_1 h \left(\frac{T_j}{T_i}\right)^A,
\end{equation}
provided that the right-hand side of~\eqref{eq:sum} does not exceed~$C_3$. Also, under the same condition,
$$
| c(\Phi_i^j(\mu_{T_i}^{(h)})) - c_{T_j}) | \le C_2.
$$
\end{proposition}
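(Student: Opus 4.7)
The plan is to prove the estimate by induction on $k$ ranging from $i$ to $j$, writing $\mu^{(k)} := \Phi_i^k(\mu_{T_i}^{(h)})$ and comparing the Euler iteration $\mu^{(k+1)} = \mu^{(k)} + \frac{\Delta T_k}{T_{k+1}}(\Pi(\mu^{(k)}) - \mu^{(k)})$ with the exact recursion
$$
\mu_{T_{k+1}} = \mu_{T_k} + \frac{\Delta T_k}{T_{k+1}}(\mu_{[T_k,T_{k+1}]} - \mu_{T_k})
$$
coming from~\eqref{eq:mu-t+s}. Subtracting and regrouping gives the telescoping identity
$$
\mu^{(k+1)} - \mu_{T_{k+1}} = \Bigl(1 - \tfrac{\Delta T_k}{T_{k+1}}\Bigr)\bigl(\mu^{(k)} - \mu_{T_k}\bigr) + \tfrac{\Delta T_k}{T_{k+1}}\bigl[(\Pi(\mu^{(k)}) - \Pi(\mu_{T_k})) + (\Pi(\mu_{T_k}) - \mu_{[T_k,T_{k+1}]})\bigr],
$$
which separates the propagated error, the Lipschitz-controlled $\Pi$-error and the one-step Euler error.

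First I would set the stage by confining all the measures involved to a common good set $\mathcal{P}_\kappa(\Rn;P) \cap K_{\alpha,C}$: for the random trajectory this is Proposition~\ref{p:expo-decrease}, and for the deterministic one it follows from Lemma~\ref{l:K-alpha-C} together with the fact that $\mu_{T_i}^{(h)}$ inherits the exponential tails of $\mu_{T_i}$ up to a radius-$h$ smearing. On this set $\Pi$ is Lipschitz (Lemma~\ref{Pibound}), with some constant $L$ independent of $k$, and the centering map is Lipschitz as well (Lemma~\ref{l:mT_P}(4)); this determines the exponent $A$ (essentially $A = L - 1$, up to handling the centering factor).

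Writing $\epsilon_k := \mT_P^{c_{T_k}}(\mu^{(k)}, \mu_{T_k})$, taking $\mT_P$-norms in the identity above, and using Proposition~\ref{p:one-step} on the last bracket, yields the recursive bound
$$
\epsilon_{k+1} \le \Bigl(1 + (L-1)\tfrac{\Delta T_k}{T_{k+1}}\Bigr)\epsilon_k + \tfrac{\Delta T_k}{T_{k+1}}(\Delta T_k)^{-\beta}.
$$
Iterating from $\epsilon_i = \mT_P^{c_{T_i}}(\mu_{T_i}^{(h)}, \mu_{T_i}) \le C_1 h$ (the smoothing produces a transport of distance at most $h$ with $P$-cost bounded on the support, which is controlled thanks to the exponential tails), and comparing the product $\prod_{k=i}^{j-1}(1 + (L-1)\Delta T_k/T_{k+1})$ to $\exp((L-1)\sum \Delta T_k/T_{k+1}) \le (T_j/T_i)^A$, one obtains exactly the sum~\eqref{eq:sum}. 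The center bound $|c(\Phi_i^j(\mu_{T_i}^{(h)})) - c_{T_j}| \le C_2$ follows at the end by applying Lemma~\ref{l:mT_P}(1) to $\epsilon_j$, which is by hypothesis bounded by $C_3$.

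The main subtlety is the bookkeeping of the centers: the metric in the statement is $\mT_P^{c_{T_j}}$, translating by the \emph{true} center, whereas the natural Lipschitz inequality for $\Pi$ lives on $\mathcal{P}_\kappa(\Rn;P)$ and the measures $\mu^{(k)}$ have their own centers $c(\mu^{(k)})$ which drift away from $c_{T_k}$. The condition that the right-hand side of~\eqref{eq:sum} stays bounded by $C_3$ plays a double role here: it ensures via Lemma~\ref{l:mT_P}(1) that $|c(\mu^{(k)}) - c_{T_k}|$ remains uniformly bounded, and via Lemma~\ref{l:mT_P}(3) that switching between $\mT_P^{c_{T_k}}$ and $\mT_P^{c(\mu^{(k)})}$ only costs a uniform multiplicative constant, which can be absorbed into $L$ (hence into $A$). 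Making sure that all of these uniform constants close the induction --- in particular that the exponent $A$ and the preconstants $C_1, C_2, C_3$ can indeed be chosen simultaneously --- is the delicate bookkeeping step.
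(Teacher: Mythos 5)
Your proposal is correct and follows essentially the same route as the paper: both proceed by induction on the endpoint index, split the $\mT_P$-error at step $k{+}1$ into a propagated piece (controlled by the Lipschitz constant of $\Pi$ on $K_{\alpha,C}\cap\mathcal P_\kappa$, absorbed into a factor $(T_{k+1}/T_k)^{A}$) plus a one-step Euler error (controlled by Proposition~\ref{p:one-step}), pay a further $(T_{k+1}/T_k)^{A_1}$-type factor for recentering via Lemma~\ref{l:mT_P}, initialize with $\epsilon_i\le C_1 h$ from the smoothening cost, and use the $C_3$-bound to keep the iterates in the good set where the Lipschitz and recentering constants are uniform. Your ``telescoping identity'' is best read as motivation rather than a line of proof --- since $\mT_P$ is a transport distance and not a norm, what one actually uses is the triangle inequality together with convexity of $\mT_P$ under mixtures, exactly as in the paper's chain of displayed inequalities --- and your remark $A=L-1$ should really be $A=A_1+A_2$ with $A_2\approx\Lip(\Pi)+1$ plus the centering contribution $A_1$, but you flag both of these imprecisions yourself and neither is a substantive gap.
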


Next, we have to show that the deterministic trajectory of an absolutely continuous measure sufficiently fast approaches the set of translates of $\rho_{\infty}$. To do this, due to the estimate~\eqref{eq:W2F}, it suffices to estimate the free energy:
\begin{proposition}\label{p:exp-decrease}
Let $\mu\in K_{\alpha, C}$. Then, there exist $a_1,C_4,C_5>0$ such that almost surely there exists $n_0$ for which the following statements hold for any $j\ge i\ge n_0$:
\begin{enumerate}
 \item[i)] $\mF(\Phi_i^j(\mu) | \rho_{\infty}) \le C_4 + \frac{T_i}{T_j} (\mF(\mu|\rho_\infty) - C_4)$,
 \item[ii)] $\mF(\Phi_i^j(\mu) | \rho_{\infty}) \le C_5 e^{-a_1\sqrt[k+1]{\log (T_i/T_j)}}$ if $\mF(\mu|\rho_\infty)\le 2C_4$.
\end{enumerate}
\end{proposition}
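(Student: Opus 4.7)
The plan is to unroll the one-step effect of $\Phi_n^{n+1}$ using Lemma~\ref{l:equality-varphi}. Writing $m_n := \Phi_i^n(\mu)$, $\lambda_n := \Delta T_n/T_{n+1}$, so that $m_{n+1} = (1-\lambda_n)m_n + \lambda_n\Pi(m_n)$, I apply that lemma with $\mu=m_n$, $\nu=\Pi(m_n)$, $\lambda=\lambda_n$. Since $\Pi(m_n)$ is the global minimum of $\varphi_{m_n}=\mF_{W*m_n}$, a direct calculation with $\log\Pi(m_n)=-W*m_n-\log Z$ identifies the "energy gap" as a relative entropy: $\varphi_{m_n}(m_n)-\varphi_{m_n}(\Pi(m_n))=\mathcal{H}(m_n|\Pi(m_n))$. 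Setting $F_n:=\mF(m_n|\rho_\infty)$ and $E_n:=\iint(m_n-\Pi(m_n))(x)W(x-y)(m_n-\Pi(m_n))(y)\,\mathrm{d}x\mathrm{d}y$, this gives the fundamental one-step recursion
\begin{equation}\label{eq:basic-recursion}
F_{n+1}\le F_n-\lambda_n\,\mathcal{H}(m_n|\Pi(m_n))+\tfrac{\lambda_n^2}{2}E_n.
\end{equation}

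Next I would establish uniform-in-$n$ bounds for all the quantities in \eqref{eq:basic-recursion}. By Lemma~\ref{l:K-alpha-C} the iterates $m_n$ remain in $K_{\alpha,C}$, and Proposition~\ref{prop:exp-decrease} gives uniform exponential tails for $\Pi(m_n)$ as well. Combined with the submultiplicative domination $P(|x-y|)\le P(|x|)P(|y|)$ and the growth bound \eqref{eq:domination}, this produces a uniform estimate $|E_n|\le E_*$. The same inputs, together with the $C_W$-uniform convexity of $W*m_n$ (which forces Gaussian-like tails of $\Pi(m_n)$), give a uniform bound $\mF(\Pi(m_n)|\rho_\infty)\le C_*$.

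For (i), apply \eqref{eq:lem-varphi} of Lemma~\ref{l:equality-varphi} to rewrite $\mathcal{H}(m_n|\Pi(m_n))=F_n-\mF(\Pi(m_n)|\rho_\infty)+\tfrac{1}{2}E_n$. Substituting into \eqref{eq:basic-recursion} yields
\begin{equation*}
F_{n+1}-C_4\le(1-\lambda_n)(F_n-C_4)+O(\lambda_n^2),
\end{equation*}
provided $C_4$ is chosen larger than $C_*+O(E_*)$. Since $T_n=n^{3/2}$ gives $\lambda_n\approx\frac{3}{2n}$ and $\prod_{k=i}^{j-1}(1-\lambda_k)\approx T_i/T_j$, the $O(\lambda_n^2)$ residual sums to $O(1/T_j)$ and is absorbed by a slight enlargement of $C_4$. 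Iterating produces the stated estimate.

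For (ii), I need to upgrade, in the regime $F_n\le 2C_4$, the contraction to a rate that itself depends on $F_n$. The target inequality is
\begin{equation*}
\mathcal{H}(m|\Pi(m))\ge \frac{c\,F(m|\rho_\infty)}{(\log(1/F(m|\rho_\infty)))^{k}}\qquad(m\in K_{\alpha,C}\text{ with }F\text{ small}),
\end{equation*}
which would be derived from three inputs: Bakry–Émery/Talagrand for $\Pi(m)$ (whose log-density is $C_W$-uniformly convex), giving $\mathcal{H}(m|\Pi(m))\ge\tfrac{C_W}{2}W_2^2(m,\Pi(m))$; the CMV inequality \eqref{eq:W2F} bounding $W_2(m,\rho_\infty)$ by $\sqrt{F_n}$; and a Lipschitz-type estimate $W_2(\Pi(m),\rho_\infty)\ll W_2(m,\rho_\infty)$ obtained by combining Lemma~\ref{Pibound} (Lipschitzness of $\Pi$ in $\|\cdot\|_P$) with a truncation at radius $R\sim\log(1/F_n)$, which converts the $\|\cdot\|_P$- and $W_2$-distances at the cost of a factor $P(R)^{1/2}\sim R^{k/2}$. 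Feeding this back into \eqref{eq:basic-recursion} produces $F_{n+1}\le F_n(1-c\lambda_n/G_n^k)+O(\lambda_n^2)$ with $G_n:=\log(1/F_n)$, and the associated ODE $G'=c/G^k$ integrates to $G_n^{k+1}\gtrsim\sum_{k=i}^{n-1}\lambda_k\approx\log(T_n/T_i)$, producing the stretched-exponential rate. The main obstacle is precisely the proof of the displayed functional inequality with the correct $\log^k$ loss: the technical difficulty lies in handling the center of $\Pi(m)$ (which is in general not the center of $m$, though translation invariance of $\mF$ absorbs this), in optimizing the truncation radius, and in verifying that the resulting residual $O(\lambda_n^2)$ term remains negligible compared to the $O(\lambda_n F_n/G_n^k)$ principal contraction throughout the whole induction.
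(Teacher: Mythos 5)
Your overall plan is close to the paper's: derive a one-step recursion for $F_n = \mF(\Phi_i^n(\mu)|\rho_\infty)$ via Lemma~\ref{l:equality-varphi}, establish uniform bounds using the invariance of $K_{\alpha,C}$ under $\Phi$ and the tail estimate for $\Pi(m_n)$, and then turn the recursion into an ODE. The identity $\varphi_{m}(m)-\varphi_{m}(\Pi(m))=\mathcal{H}(m|\Pi(m))$ is correct and is a nice way to phrase it. Your treatment of part~(i) is essentially correct — after plugging \eqref{eq:lem-varphi} with $\nu=\Pi(m_n)$ into the basic recursion, the leading terms reorganize into $F_{n+1}-C_4\le(1-\lambda_n)(F_n-C_4)$ with $1-\lambda_n=T_n/T_{n+1}$, and the product telescopes to $T_i/T_j$. (Your residual is really $O(\lambda_n)$, not $O(\lambda_n^2)$, because the cross term $-\frac{\lambda_n(1-\lambda_n)}{2}E_n$ is first order in $\lambda_n$ when $E_n$ is of either sign; but it is absorbed into $C_4$ exactly as you indicate.)

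Part~(ii) is where there is a genuine gap, and the problem is a \emph{direction of inequality}. You propose to lower bound $\mathcal{H}(m|\Pi(m))$ by $\tfrac{C_W}{2}W_2^2(m,\Pi(m))$ via Talagrand, relate $W_2(m,\Pi(m))$ to $W_2(m,\rho_\infty)$ via a contraction estimate for $\Pi$, and then invoke CMV. But CMV (inequality~\eqref{eq:W2F}) is $W_2^2(m,\rho_\infty)\le\frac{2}{C_W}\mF(m|\rho_\infty)$ — an \emph{upper} bound on the Wasserstein distance. Your chain at best yields $\mathcal{H}(m|\Pi(m))\gtrsim W_2^2(m,\rho_\infty)$, and converting this to a lower bound in terms of $\mF(m|\rho_\infty)$ requires the \emph{reverse} of CMV, i.e.\ $W_2^2(m,\rho_\infty)\gtrsim \mF(m|\rho_\infty)/\log^k(\cdot)$. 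That inequality is not available and is false in general: the free energy contains the entropy $\mathcal{H}(m)$, which can be arbitrarily large while $W_2(m,\rho_\infty)$ remains small (narrow-bump approximations of $\rho_\infty$ have small $W_2$ but large entropy), and membership in $K_{\alpha,C}$ controls only tails, not local regularity, so it does not repair this.

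The paper's Lemma~\ref{l:maj-varphi} avoids this by never trying to lower bound $W_2(m,\Pi(m))$. It exploits that $\Pi(\mu)$ is the global minimizer of $\varphi_\mu$ (Lemma~\ref{l:fixed-point}), so $\varphi_\mu(\mu)-\varphi_\mu(\Pi(\mu))\ge\varphi_\mu(\mu)-\varphi_\mu(\nu_s)$ for \emph{any} competitor $\nu_s$; one then tests with a \emph{short} piece of the displacement interpolation $\nu_s$ between $\mu$ and $\rho_\infty(\cdot+c_\mu)$. Displacement convexity gives $\mF(\mu|\rho_\infty)-\mF(\nu_s|\rho_\infty)\ge s\,\mF(\mu|\rho_\infty)$, while the quadratic remainder $\iint(\nu_s-\mu)W(\nu_s-\mu)$ is bounded \emph{from above} using a truncation at radius $L\sim|\log\mF(\mu|\rho_\infty)|$ together with CMV — crucially, CMV here is used in the direction it actually goes, to upper bound $W_2^2(\mu,\rho_\infty)$ inside the error term. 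Optimizing the transport parameter $s=\frac{C_W}{4P(4L)}$ then produces the $\log^k$ loss. So the displacement-convexity/partial-transport mechanism is precisely the missing ingredient; without it, your functional inequality does not follow from the tools you cite, and the subsequent ODE argument (which is otherwise correct) has nothing to feed on.
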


Now, modulo these propositions, we are ready to prove Proposition~\ref{p:exp-conv}.

\begin{proof}[Proof of Proposition \ref{p:exp-conv}]
Recall from Proposition~\ref{p:one-step} that $\beta = \min (8C_W,(5d)^{-1})$. Note first that the distances $\mT_P^{c_{T_n}} (\mu_t,\mu_{T_n})$ for $t\in [T_n,T_{n+1}]$ are uniformly bounded for $n$ sufficiently big by 
$$
\frac{L_n^{k+1}  \Delta T_n}{T_{n+1}} \le c\frac{(\log n)^{k+1}}{n} \ll e^{-\sqrt[k+1]{\log n}};
$$
where $L_n$ is defined by~\eqref{eq:def-L_n}. Hence, it suffices to check the estimate for the subsequence of moments $T_n$:
$$
\mT_P (\mu_{T_n}^c, \rho_{\infty}) \le e^{-a \sqrt[k+1]{\log n}}.
$$
Now, for any sufficiently big $n$, take $i:=[n^{1-\delta}]$, where a small $\delta>0$ will be chosen and fixed (in a way that does not depend on~$n$) later. Then, considering for some $h>0$ a smoothened convolution $\mu_{T_i}^{(h)}$ and its Euler-image $\Phi_i^n(\mu_{T_i}^{(h)})$, we have by Proposition~\ref{p:Euler-smooth}
\begin{equation}\label{eq:star-E}
\mT_P^{c_{T_n}} (\mu_{T_n}, \Phi_i^n(\mu_{T_i}^{(h)})) \le \sum_{k=i}^{n-1} \frac{(\Delta T_k)^{1-\beta}}{T_{k+1}} \left(\frac{T_n}{T_k}\right)^A + C_1 h \left( \frac{T_n}{T_i} \right)^A,
\end{equation}
provided that the right-hand side does not exceed~$C_3$.

Denote by $\const$ a generic constant. Let us estimate the first term in the right-hand side:
\begin{multline}
\sum_{k=i}^{n-1} 
\frac{\Delta T_k}{T_{k+1}} (\Delta T_k)^{-\beta} \left( \frac{T_n}{T_k} \right)^A \le
\sum_{k=i}^{n-1} \frac{2}{k} (\Delta T_k)^{-\beta} \left( \frac{T_n}{T_k} \right)^A \le \\ 
\le \sum_{k=i}^{n-1} \frac{2}{i} (\Delta T_i)^{-\beta} \left( \frac{T_n}{T_i} \right)^A \le 
\const \cdot n  \frac{(i^{1/2})^{-\beta}}{i} \left( \frac{n^{3/2}}{i^{3/2}} \right)^A  \le \const \cdot n^{(1+ A+\frac{\beta}{2})\delta-\frac{\beta}{2}}.
\end{multline}
So, for any fixed choice of $\delta<\frac{\beta/2}{1+A+(\beta/2)}$, the first term in the right-hand side of~\eqref{eq:star-E} will decrease as a negative power of $n$ and thus quicker than $e^{-a\sqrt[k+1]{\log T_n}}$.

Take now $h=\frac{C_3}{C_1} \left(\frac{T_i}{T_n} \right)^{A+1}$. For such a choice of $h$, the second term in the right-hand side of~\eqref{eq:star-E} is not greater than $\frac{T_i}{T_n}\sim n^{-\delta}$. So it also decreases quicker than $e^{-a\sqrt[k+1]{\log T_n}}$ and thus $\mT_P^{c_{T_n}} (\mu_{T_n}, \Phi_i^n(\mu_{T_i}^{(h)}))\ll e^{-a\sqrt[k+1]{\log T_n}}$. 

Finally, we have to estimate $\mT_P^{c_{T_n}} (\Phi_i^n(\mu_{T_i}^{(h)}), \rho_\infty(\cdot + c_{T_n}))$. To do this, it suffices to estimate the free energy $\mF(\Phi_i^n(\mu_{T_i}^{(h)}))$, as 
$$\mT_P\left((\Phi_i^n(\mu_{T_i}^{(h)}))^c,\rho_\infty\right) \le \const W_2^2\left((\Phi_i^n(\mu_{T_i}^{(h)}))^c,\rho_\infty \right) \le \const \mF(\Phi_i^n(\mu_{T_i}^{(h)})).$$
Indeed, $$\mF(\mu_{T_i}^{(h)}) = \mathcal{H}(\mu_{T_i}^{(h)}) + \iint \mu_{T_i}^{(h)}(\mathrm{d}x) W(x-y) \mu_{T_i}^{(h)}(\mathrm{d}y).$$
The first term here does not exceed $-\log vol(\mathcal{U}_h(0))\le d\cdot |\log (h/d)|$ (as the density of~$\mu_{T_i}^{(h)}$ does not exceed~$(h/d)^{-d}$), while the second term is bounded. Thus $\mF(\mu_{T_i}^{(h)}) \le C_6 \log n$ for some constant $C_6$. Hence, from the first part of Proposition~\ref{p:exp-decrease}, for $j =\left[ \left(\frac{C_6}{C_4} \log n\right)^{2/3} i\right]$, 
$$\mF(\Phi_i^j (\mu_{T_i}^{(h)})|\rho_\infty)\le C_4 + \frac{T_i}{T_j} \left(\mF(\mu_{T_i}^{(h)}|\rho_\infty) -C_4 \right) \le 2C_4.$$
Applying the second part, with $\Phi_i^j (\mu_{T_i}^{(h)})$ as a starting measure, we obtain 
$$\mF( \Phi_i^n (\mu_{T_i}^{(h)})) = \mF( \Phi_j^n\circ \Phi_i^j (\mu_{T_i}^{(h)})) \le C_5 e^{-a_1 \sqrt[k+1]{\log(T_n/T_j)}} \le e^{-a \sqrt[k+1]{\log T_n}}.$$
Thus, $\mF(\Phi_i^n (\mu_{T_i}^{(h)}))\le e^{-a \sqrt[k+1]{\log T_i}}$ and hence $$\mT_P\left((\Phi_i^n(\mu_{T_i}^{(h)}))^c,\rho_\infty\right) \le \const W_2^2\left((\Phi_i^n(\mu_{T_i}^{(h)}))^c,\rho_\infty \right) \le \const e^{-a \sqrt[k+1]{\log T_n}}.   \qedhere$$
\end{proof}

Let us now prove Propositions~\ref{p:one-step}--\ref{p:exp-decrease}.

\subsubsection{One-step error estimate}

This section is devoted to the proof of Proposition~\ref{p:one-step}.

To estimate the difference between the occupation measure of $X_t$ on $[T_n,T_{n+1}]$, and the measure $\Pi(\mu_t)$, we will first introduce another process, for which $\Pi(\mu_{T_n})$ is the stationary measure: the process with ``frozen'' measure~$\mu_{T_n}$. 
More precisely, on $[T_n,T_{n+1})$ we consider a process $Y$ with some choice of $Y_{T_n}$, satisfying
\begin{equation}\label{eq:Y-fix-mu}
\mathrm{d} Y_t = \sqrt{2}\, \mathrm{d}B_t - \nabla W*\mu_{T_n} (Y_t)\, \mathrm{d}t,
\end{equation} 
generated by \emph{the same} Brownian motion~$B_t$ as~$X_t$. In other words, the couple $(X_t,Y_t)$ satisfies
\begin{equation}\label{eq:XY}
 \left\{ 
   \begin{array}{l}
          \mathrm{d}X_t = \sqrt{2}\, \mathrm{d}B_t - \nabla W*\mu_{t} (X_t)\, \mathrm{d}t \\
	  \mathrm{d}Y_t = \sqrt{2}\, \mathrm{d}B_t - \nabla W*\mu_{T_n} (Y_t)\, \mathrm{d}t.
         \end{array}
 \right.
\end{equation}

The following lemma allows to control the difference between them:
\begin{lemma}\label{l:majX^1-X^2}
For all $t\in [T_n, T_{n+1}]$ we have
\begin{equation}\label{eq:distance-frozen}
|X_t - Y_t| \leq e^{-C_W (t-T_n)}|X_{T_n} - Y_{T_n}| + \frac{\Delta T_n}{T_n C_W} P(2L_n).
\end{equation}
\end{lemma}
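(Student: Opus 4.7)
The plan is to exploit the fact that $X$ and $Y$ are driven by the \emph{same} Brownian motion, so that $D_t := X_t - Y_t$ satisfies a pathwise ODE whose drift can be compared to a linear one by uniform convexity, plus a small perturbation coming from the fact that $\mu_t$ has barely moved away from $\mu_{T_n}$.

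First I would subtract the two SDEs in \eqref{eq:XY}; the noise cancels and
\[
\dot D_t \;=\; -\bigl(\nabla W*\mu_t(X_t) - \nabla W*\mu_{T_n}(Y_t)\bigr).
\]
I then split the drift as
\[
\nabla W*\mu_t(X_t) - \nabla W*\mu_{T_n}(Y_t) \;=\; \underbrace{\bigl(\nabla W*\mu_{T_n}(X_t)-\nabla W*\mu_{T_n}(Y_t)\bigr)}_{\text{(I)}} + \underbrace{\nabla W*(\mu_t-\mu_{T_n})(X_t)}_{\text{(II)}},
\]
and take the inner product with $D_t$ in $\frac12\frac{d}{dt}|D_t|^2$. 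Since $W$ is uniformly $C_W$-convex, so is $W*\mu_{T_n}$, hence $\langle D_t,\text{(I)}\rangle \ge C_W |D_t|^2$.

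For term (II) I would use the elementary identity
\[
\mu_t - \mu_{T_n} \;=\; -\,\frac{t-T_n}{t}\mu_{T_n} + \frac{1}{t}\int_{T_n}^{t}\delta_{X_s}\,\mathrm{d}s,
\]
so that $\text{(II)} = -\frac{t-T_n}{t}\nabla W*\mu_{T_n}(X_t) + \frac1t\int_{T_n}^{t} \nabla W(X_t-X_s)\,\mathrm{d}s$. In both integrals the integrand $\nabla W(X_t-X_s)$ involves only points within the ball of radius $L_n$ around $c_{T_n}$ (for $s\in[0,T_{n+1}]$, by the very definition \eqref{eq:def-L_n} of $L_n$), so $|X_t-X_s|\le 2L_n$ and the domination assumption \eqref{eq:domination} yields $|\nabla W(X_t-X_s)|\le P(2L_n)$. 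Putting the two pieces together gives
\[
|\text{(II)}| \;\le\; \frac{t-T_n}{t}\,P(2L_n) \;\le\; \frac{\Delta T_n}{T_n}\,P(2L_n).
\]

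Combining these estimates produces the differential inequality
\[
\frac{d}{dt}|D_t| \;\le\; -C_W|D_t| + \frac{\Delta T_n}{T_n}\,P(2L_n),
\]
valid a.e.\ on $\{D_t\neq 0\}$ (and trivially absorbable on the zero set). A standard Gr\"onwall/integration argument then gives
\[
|D_t| \;\le\; e^{-C_W(t-T_n)}|D_{T_n}| + \frac{\Delta T_n}{T_nC_W}P(2L_n)\bigl(1-e^{-C_W(t-T_n)}\bigr),
\]
which yields \eqref{eq:distance-frozen}. The only genuinely non-routine step is the control of (II): one has to recognise that the difference $\mu_t-\mu_{T_n}$ is a signed measure of mass $O(\Delta T_n/T_n)$ supported (for the part that matters, via the kernel $\nabla W(X_t-\cdot)$) on trajectory points that by definition of $L_n$ remain at distance $\le 2L_n$ from $X_t$; the factor $\Delta T_n/T_n$ coming from this averaging argument is precisely what produces the small perturbation in the final bound.
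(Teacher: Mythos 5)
Your proof is correct and follows essentially the same route as the paper: subtract the SDEs so the noise cancels, split the drift by adding and subtracting $\nabla W*\mu_{T_n}(X_t)$, absorb the first piece via uniform convexity into a $-C_W|X_t-Y_t|$ term, bound the perturbation $\nabla W*(\mu_t-\mu_{T_n})(X_t)$ by $O\!\left(\tfrac{\Delta T_n}{T_n}P(2L_n)\right)$ using the $L_n$ localization, and conclude by Gr\"onwall. (Both your write-up and the paper quietly drop a factor of $2$ in the bound on the perturbation --- the signed measure $\mu_{[T_n,t]}-\mu_{T_n}$ has total variation $2$ --- but this is harmless for the way the lemma is used.)
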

\begin{proof}
The process $X_t -Y_t$ is of class $C^1$. We  
compute 
$$
\frac{\mathrm{d}}{\mathrm{d}t}(X_t -Y_t) = -( \nabla W*\mu_t(X_t) - \nabla W*\mu_{T_n}(Y_t)).
$$ 
Adding and substracting $\nabla W*\mu_{T_n}(X_t)$, we see 
$$
\mathrm{d}(X_t -Y_t) = -\left[\nabla W*(\mu_t-\mu_{T_n})(X_t) - 
(\nabla W*\mu_{T_n}(Y_t)-\nabla W*\mu_{T_n}(X_t)) \right] \mathrm{d}t.
$$ 
The last term can be rewritten as $$- (\nabla W*\mu_{T_n}(Y_t)-\nabla W*\mu_{T_n}(X_t)) = \frac{1}{T_n} \int_0^{T_n} \int_0^1 \nabla^2 W|_{uY_t + (1-u) X_t-X_s} \cdot (X_t -Y_t) \mathrm{d}u\, \mathrm{d}s.$$ Noting the first term as $D_t$, and putting a scalar product with $X_t-Y_t$, we see
\begin{eqnarray*}
\frac{1}{2} \frac{\mathrm{d}}{\mathrm{d}t}|X_t-Y_t|^2 &=& (D_t, X_t -Y_t)\\ 
&-& \frac{1}{T_n}\int_0^{T_n} \left(\int_0^1 \nabla^2 W|_{(uY_t + (1-u) X_t) -X_s^1} \mathrm{d}u\, \cdot (X_t -Y_t),X_t -Y_t \right)\, \mathrm{d}s\\
&\leq & (D_t, X_t-Y_t) - C_W |X_t - Y_t|^2.
\end{eqnarray*}
Thus, $\frac{\mathrm{d}}{\mathrm{d}t} |X_t - Y_t|^2 \leq -2C_W |X_t - Y_t|^2 + 2|D_t| |X_t - Y_t|$. Redividing by $2|X_t - Y_t|$, we obtain 
\begin{equation}\label{eq:D-diff}
\frac{\mathrm{d}}{\mathrm{d}t} |X_t - Y_t| \leq |D_t| - C_W |X_t - Y_t|.
\end{equation}
Finally, notice that $|D_t| \leq P(2L_n) \frac{\Delta T_n}{T_n}$, as it is the difference between the forces generated at $X_t$ by $\mu_{T_n}$ and by $\mu_t = \mu_{T_n} + \frac{t- T_n}{t} \left(\mu_{[T_n, t]} -\mu_{T_n} \right)$. Solving $\dot{u}_t = P(2L_n) \frac{\Delta T_n}{T_n} - C_W u_t$, we obtain the desired estimate for the difference $|X_t-Y_t|$ on the interval $[T_n, T_{n+1}]$.
\end{proof}

For what follows (see Proposition~\ref{p:CG} and Lemma~\ref{l:CG} below), we will have to assume that the initial distribution of $Y_{T_n}$ is absolutely continuous with respect to $\Pi(\mu_{T_n})$, and to use an estimate on its density. So finally, we define the process $Y_{t}$ for all $t$ in the following way: for every interval $[T_n,T_{n+1})$ the initial value $Y_{T_n}$ is chosen randomly with respect to the restriction of $\Pi(\mu_{T_n})$ on the unit ball $\mathcal{U}_1(c_{T_n})$. On each new interval, the choice is independent of $X$ and of all the past. Then, inside the interval $(T_n,T_{n+1})$, the couple $(X_n,Y_n)$ satisfies~\eqref{eq:XY}.

Let us compare the occupation measures of the processes $X$ and $Y$ on these intervals of time. Denote by $\tilde{\mu}_{[T_n,T_{n+1}]}$ the occupation measure of $Y$ on the interval $[T_n,T_{n+1}]$. Then, we have the following:

\begin{lemma}\label{l:tilde}
For any family of choices $Y_{T_n}\in \mathcal{U}_1(c_{T_n})$, we have 
$$
\mT_P^{c_{T_n}}(\mu_{[T_n, T_{n+1}]}, \tilde{\mu}_{[T_n, T_{n+1}]})= o(T_n^{-1/5}), \quad \text{ as } n\to\infty,
$$
provided that for $n$ sufficiently big $L_n\le C_3' \log n$.
\end{lemma}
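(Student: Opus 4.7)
The plan is to construct an explicit coupling of the two occupation measures by pairing $X_s$ and $Y_s$ at the same time $s$, and transporting along straight segments. Concretely, I would take $\Omega:=[T_n,T_{n+1}]$ with the normalized Lebesgue measure and define $f(u,s):=X_s-c_{T_n}+u(Y_s-X_s)$ for $u\in[0,1]$; the $u=0$ and $u=1$ marginals are then exactly $\mu_{[T_n,T_{n+1}]}(\cdot+c_{T_n})$ and $\tilde{\mu}_{[T_n,T_{n+1}]}(\cdot+c_{T_n})$, so the $\mT_P$-cost of this admissible $f$ is an upper bound for $\mT_P^{c_{T_n}}(\mu_{[T_n,T_{n+1}]},\tilde{\mu}_{[T_n,T_{n+1}]})$.

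Bounding this cost would rely on two ingredients. First, since $|X_{T_n}-c_{T_n}|\le L_n$ and $Y_{T_n}\in\mathcal{U}_1(c_{T_n})$, one has $|X_{T_n}-Y_{T_n}|\le L_n+1$, so Lemma~\ref{l:majX^1-X^2} yields
$$
|X_t-Y_t|\le e^{-C_W(t-T_n)}(L_n+1)+\frac{\Delta T_n}{T_n C_W}P(2L_n), \qquad t\in[T_n,T_{n+1}].
$$
Second, by the triangle inequality $|Y_s-c_{T_n}|\le |X_s-c_{T_n}|+|X_s-Y_s|=O(L_n)$, so the interpolant $f(u,s)$ stays in a ball of radius $O(L_n)$; monotonicity of $r\mapsto P(r)$ then gives $P(|f(u,s)|)\le P(O(L_n))\le C(\log n)^k$ for $n$ large.

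Putting both ingredients together, the cost of the transport is at most
$$
\frac{C(\log n)^k}{\Delta T_n}\int_{T_n}^{T_{n+1}}|X_s-Y_s|\,\mathrm{d}s \le \frac{C(\log n)^k}{\Delta T_n}\left(\frac{L_n+1}{C_W}+\frac{(\Delta T_n)^2 P(2L_n)}{T_n C_W}\right).
$$
With $\Delta T_n\sim T_n^{1/3}$, $L_n\le C_3'\log n$ and $\deg P=k$, this simplifies to $O\bigl(T_n^{-1/3}(\log T_n)^{2k+1}\bigr)$, which is $o(T_n^{-1/5})$ as desired.

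The delicate point in this plan is that the $\mT_P$-weight $P$ is evaluated along the full interpolating segment and could in principle blow up; the reason it does not is that both endpoints $X_s-c_{T_n}$ and $Y_s-c_{T_n}$ remain within a $O(\log n)$-ball (the first by definition of $L_n$, the second by the triangle inequality together with Lemma~\ref{l:majX^1-X^2}). Consequently $P$ only contributes polylogarithmic losses, and the large initial discrepancy $|X_{T_n}-Y_{T_n}|=O(\log n)$ is tamed by the exponential contraction $e^{-C_W(t-T_n)}$, so its time-average contributes merely $1/\Delta T_n$, which the $T_n^{-1/3}$ factor easily absorbs against the polylogarithms.
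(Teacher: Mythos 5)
Your proof follows essentially the same approach as the paper: the same straight-line coupling at matched times, the same use of Lemma~\ref{l:majX^1-X^2} with $|X_{T_n}-Y_{T_n}|\le L_n+1$, the same bound $P(|f(u,s)|)\le P(O(L_n))$ via the interpolant staying in an $O(L_n)$-ball, and the same split of the resulting integral into an $O((\log T_n)^{\cdot}/\Delta T_n)$ part from the exponentially decaying initial gap and an $O((\log T_n)^{\cdot}\Delta T_n/T_n)$ part from the drift mismatch. Your exponent on $\log T_n$ is slightly generous compared to the paper's accounting, but that discrepancy is irrelevant for concluding $o(T_n^{-1/5})$.
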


\begin{proof}
The measures $\mu_{[T_n,T_{n+1}]}$ and $\tilde{\mu}_{[T_n, T_{n+1}]}$ are both images of the normalized Lebesgue measure $\frac{1}{\Delta T_n} \Leb_{[T_n, T_{n+1}]}$ under the maps $X_{\bullet}$ and $Y_{\bullet}$ respectively. So, consider the transport $\xi_s(t)=(1-s) X_t+ sY_t$ between them.

Using this transport, we have an estimate
\begin{eqnarray}\label{eq:transport-est}
\mT_P^{c_{T_n}} (\mu_{[T_n,T_{n+1}]}, \tilde{\mu}_{[T_n,T_{n+1}]} ) \le  \frac{1}{\Delta T_n} \int_{T_n}^{T_{n+1}}\int_0^1 P((1-s) X_t+ sY_t-c_{T_n}) |X_t-Y_t| \, \mathrm{d}s \,  \mathrm{d}t \notag\\
\le  \frac{1}{\Delta T_n} \int_{T_n}^{T_{n+1}} P(\max(|X_t-c_{T_n}|,|Y_t-c_{T_n}|)) |X_t-Y_t| \,  \mathrm{d}t.
\end{eqnarray}
By definition of $L_n$, we have $\forall t\in[T_n, T_{n+1}], \quad |X_t-c_{T_n}| \le L_n$ and due to Lemma~\ref{l:majX^1-X^2},
\begin{eqnarray*}
|Y_t - X_t| &\le & e^{-C_W (t-T_n)}|X_{T_n} - Y_{T_n}| + \frac{\Delta T_n}{T_n C_W} P(2L_n)\\ 
&\le & L_n + 1 + \frac{\Delta T_n}{T_n C_W} P(2 L_n) \le L_n + 2,
\end{eqnarray*}
provided that $L_n\le C_3'\log n$ and $n$ is sufficiently big. This implies that 
$$
|Y_t- c_{T_n}| \le |Y_t - X_t| + |X_t - c_{T_n}| \le 2 L_n +2.
$$

Now, substituting the obtained estimates to the right-hand side of~\eqref{eq:distance-frozen},
we see that 
\begin{multline*}
\mT_P^{c_{T_n}} (\mu_{[T_n,T_{n+1}]}, \tilde{\mu}_{[T_n,T_{n+1}]} ) \le \\ 
\le  \frac{1}{\Delta T_n}   \int_{T_n}^{T_{n+1}} P(2L_n + 2) \cdot \left(e^{-C_W (t-T_n)} (L_n +1) + \frac{\Delta T_n}{T_n C_W} P(2L_n) \right) \mathrm{d}t\le \\ 
\le \frac{P(2 L_n +2) P(2 L_n) \Delta T_n }{C_W T_n} + \frac{P(2L_n +2 ) (L_n+1)}{C_W \Delta T_n}= o(T_n^{-1/5})
\end{multline*}
(we have used that $\Delta T_n\sim T_n^{1/3}$, and once again the logarithmic growth of $L_n$).
\end{proof}

Now, we will compare the occupation measure $\tilde{\mu}_{[T_n,T_{n+1}]}$ with $\Pi(\mu_{T_n})$. To do this, we use Proposition~1.2 of Cattiaux~\& Guillin~\cite{CG} (see also Wu \cite{Wu}), stating that the trajectory mean of a function $\psi$ is, with a probability close to 1 that can be exponentially controlled, close to its stationary mean. Namely, this proposition says the following:

\begin{proposition}[Cattiaux~\& Guillin~\cite{CG}]\label{p:CG} 
Given a process $\xi$ with a stationary measure $m$ and Poincar\'e constant $C_P$,  
an initial measure~$\nu$ and a function $\psi$ satisfying $|\psi|\le 1$, one has for any $0<\rho<1$ and $t>0$
$$
\mathbb{P}_{\nu} \left(\frac{1}{t} \int_0^t \psi(\xi_s) \, \mathrm{d}s - \int \psi \mathrm{d}m \ge \rho \right) \le \left\| \frac{\mathrm{d}\nu}{\mathrm{d}m} \right\|_{L_2(m)} 
\exp\left( - \frac{t \rho^2}{8 C_P Var_{m} (\psi)}\right).
$$
\end{proposition}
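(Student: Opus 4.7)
\medskip
\noindent\textbf{Proof plan (sketch).}
The plan is to prove this concentration inequality by a Chernoff-type exponential bound combined with a Feynman-Kac semigroup estimate that draws on the Poincar\'e inequality satisfied by $m$. First I would recenter by setting $\tilde{\psi}:=\psi - \int \psi\,\mathrm{d}m$, so that the event of interest is $\{\frac{1}{t}\int_0^t \tilde\psi(\xi_s)\,\mathrm{d}s\ge \rho\}$ and $\tilde\psi$ has zero mean under $m$. The exponential Markov inequality then gives, for every $\lambda>0$,
$$
\mathbb{P}_\nu\!\left(\frac{1}{t}\int_0^t\tilde\psi(\xi_s)\,\mathrm{d}s\ge \rho\right) \le e^{-\lambda t\rho}\,\mathbb{E}_\nu\!\left[\exp\!\left(\lambda \int_0^t \tilde\psi(\xi_s)\,\mathrm{d}s\right)\right].
$$

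Next I would separate the dependence on the initial law $\nu$ from the dynamics. The quantity $F(x):=\mathbb{E}_x[\exp(\lambda \int_0^t \tilde\psi(\xi_s)\,\mathrm{d}s)]$ is a deterministic function of the starting point, so by Cauchy-Schwarz against $m$,
$$
\mathbb{E}_\nu[F] = \int F\,\frac{\mathrm{d}\nu}{\mathrm{d}m}\,\mathrm{d}m \le \left\|\frac{\mathrm{d}\nu}{\mathrm{d}m}\right\|_{L_2(m)} \|F\|_{L_2(m)},
$$
which produces exactly the prefactor $\|\mathrm{d}\nu/\mathrm{d}m\|_{L_2(m)}$ appearing in the statement. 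Recognizing $F=P_t^\lambda \mathbf{1}$, where $P_t^\lambda$ is the Feynman-Kac semigroup with generator $L+\lambda\tilde\psi$ (and $L$ is the generator of $\xi$), I would control $\|F\|_{L_2(m)}$ by $\exp(t\Lambda(\lambda))$, with $\Lambda(\lambda)$ the top of the $L^2(m)$-spectrum of $L+\lambda\tilde\psi$.

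The core estimate is then the bound
$$
\Lambda(\lambda) \le \frac{\lambda^2}{2}\, C_P\, \mathrm{Var}_m(\psi),
$$
obtained from the variational characterization
$\Lambda(\lambda)=\sup_{\|f\|_{L_2(m)}=1}\{\lambda\int \tilde\psi f^2\,\mathrm{d}m - \mathcal{E}(f,f)\}$ (assuming reversibility, which holds in our application since $Y$ is reversible w.r.t.\ $\Pi(\mu_{T_n})$). Using $\int \tilde\psi\,\mathrm{d}m=0$, Cauchy-Schwarz gives $\int \tilde\psi f^2\,\mathrm{d}m \le \mathrm{Var}_m(\psi)^{1/2}\cdot \|f^2-1\|_{L_2(m)}$; the Poincar\'e inequality applied to $f$ (after linearizing $f^2-1=2(f-1)+(f-1)^2$) converts $\|f^2-1\|_{L_2(m)}^2$ into $\mathcal{E}(f,f)$ up to a constant, and then the elementary inequality $ab\le a^2/(2c)+cb^2/2$ closes the loop. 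Plugging this into the Chernoff bound and optimizing in $\lambda$ by taking $\lambda=\rho/(2C_P\mathrm{Var}_m(\psi))$ produces the Gaussian-type rate $\exp(-t\rho^2/(8C_P\mathrm{Var}_m(\psi)))$.

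\medskip
\noindent\textbf{Main obstacle.} The delicate point is the spectral bound on $\Lambda(\lambda)$ with the correct constants: linearizing $f^2-1$ and applying Poincar\'e and Cauchy-Schwarz has to be done so that only $\mathrm{Var}_m(\psi)$ (not $\|\psi\|_\infty^2$ or $\mathrm{osc}(\psi)^2$) appears in the exponent. Controlling the small-$\lambda$ regime where $F$ is close to constant, and checking that the Feynman-Kac semigroup is well defined in $L^2(m)$ for a bounded perturbation of $L$, are the technical requirements; the restriction $0<\rho<1$ (and hence the relevant range of $\lambda$) keeps the perturbation small enough for this to be routine.
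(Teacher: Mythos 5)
The paper does not prove this proposition: it is a direct citation of Cattiaux and Guillin~\cite{CG} (their Proposition~1.2, in the line of Wu and Lezaud), used here as a black box. Your strategy is exactly the one used in that literature --- exponential Markov inequality, Cauchy--Schwarz against $m$ to produce the $\|\mathrm{d}\nu/\mathrm{d}m\|_{L^2(m)}$ prefactor, and a Feynman--Kac spectral estimate driven by the Poincar\'e constant --- so the plan is sound at the level of architecture.

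However, the step you yourself flag as the delicate point is where the sketch has a genuine gap: the claimed spectral bound $\Lambda(\lambda)\le \tfrac{\lambda^2}{2}C_P\,\mathrm{Var}_m(\psi)$ does not hold. If it did, optimizing in $\lambda$ would give the rate $\exp(-t\rho^2/(2C_P\mathrm{Var}_m(\psi)))$ and the hypotheses $|\psi|\le 1$, $0<\rho<1$ would be superfluous; moreover your own arithmetic is inconsistent, since plugging $\lambda=\rho/(2C_P\mathrm{Var}_m(\psi))$ into $-\lambda t\rho+t\Lambda(\lambda)$ with that bound yields exponent $-\tfrac{3}{8}\,t\rho^2/(C_P\mathrm{Var}_m(\psi))$, not $-\tfrac{1}{8}$. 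The correct variational estimate, after writing $f=1+g$ and applying Poincar\'e and Cauchy--Schwarz, leaves a quadratic remainder $\lambda\int\tilde\psi\,g^2\,\mathrm{d}m$ that one can only control through $\|\tilde\psi\|_\infty$; the bound that actually comes out has the form $\Lambda(\lambda)\le \lambda^2 C_P\mathrm{Var}_m(\psi)/(1-c\,\lambda C_P)$, valid only for $\lambda$ below a threshold. It is precisely the boundedness $|\psi|\le 1$ and the restriction $\rho<1$ that keep the optimizing $\lambda$ in that range, at the price of the degraded constant $1/8$, and your sketch needs to carry this through rather than assert the clean quadratic bound. One further caveat: writing $\Lambda(\lambda)=\sup_{\|f\|_{L^2(m)}=1}\{\lambda\int\tilde\psi f^2\,\mathrm{d}m-\mathcal{E}(f,f)\}$ presumes reversibility; this is fine for the frozen process $Y$ in the application, but Cattiaux--Guillin's statement also covers non-reversible dynamics, where $\mathcal{E}$ must be taken as the Dirichlet form of the symmetrized generator.
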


We will use this proposition with $\psi$ being the indicator function $\psi=\1_M$ of various sets $M$: it then allows to compare the occupation measure of the set $M$ to its $\Pi(\mu_{T_n})$-measure.

We know that $m=\Pi(\mu_{T_n})$ is the unique stationary measure of the drifted Brownian motion~\eqref{eq:Y-fix-mu}. Also, the Poincar\'e constant for this process is $2C_W$ (see~\cite{ane-etc}).

To proceed, we have to declare the initial measure $\nu=\nu_n$ for $Y_{T_n}$, and we choose it to be the measure $\Pi(\mu_{T_n})$ restricted to the ball $\mathcal{U}_1(c_{T_n})$ and then normalized accordingly. Then,
$$
\left\| \frac{\mathrm{d}\nu_n}{\mathrm{d}\Pi(\mu_{T_n})} \right\|_{L_2(\Pi(\mu_{T_n}))} = \frac{1}{\Pi(\mu_{T_n})(\mathcal{U}_1(c_{T_n}))} \le c_E= \const,
$$
the latter inequality is due to the exponential tails of~$\Pi(\mu_{T_n})$. 
Having made these choices, we are going to prove the following 

\begin{lemma}\label{l:CG}
As $n\to\infty$, we have almost surely
$$\mT_P^{c_{T_n}} (\tilde{\mu}_{[T_n, T_{n+1}]}, \Pi(\mu_{T_n})) = O((\Delta T_n)^{- \min \left(8C_W,\frac{1}{5d} \right) }).$$
\end{lemma}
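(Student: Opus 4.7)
My plan is to decouple the problem into three steps: (i) a truncation so that both measures are essentially supported in a large ball $B_n := B(c_{T_n}, R_n)$ with $R_n \asymp \log n$; (ii) a cell-by-cell application of the Cattiaux--Guillin concentration (Proposition~\ref{p:CG}); and (iii) a transport reconstruction converting cell-wise mass deviations into a $\mT_P$-bound. For the truncation, the coupling estimate $|Y_t - X_t| \le L_n + 2$ from Lemma~\ref{l:majX^1-X^2} and $|X_t - c_{T_n}| \le L_n$ together give $|Y_t - c_{T_n}| \le 2L_n + 2 = O(\log n)$, so $\tilde\mu_{[T_n,T_{n+1}]}$ is supported in $B_n$ for an appropriate $R_n = \Theta(\log T_n)$. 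Simultaneously, Proposition~\ref{prop:exp-decrease} bounds
$$\int_{|x-c_{T_n}| > R_n} P(|x-c_{T_n}|)\, d\Pi(\mu_{T_n})(x) \le C_\Pi \int_{R_n}^{\infty} r^{d-1} P(r) e^{-C_W r}\, \mathrm{d}r,$$
which is at most $(\Delta T_n)^{-8 C_W}$ (up to a polylogarithmic factor) once $R_n$ is chosen as a large enough multiple of $\log T_n$.

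\textbf{Cells and concentration.} I cover $B_n$ by a measurable grid of $N \asymp (R_n/h_n)^d$ cells $M_1, \dots, M_N$ of diameter at most $h_n$, and for each $i$ apply Proposition~\ref{p:CG} twice, with $\psi = \pm\1_{M_i}$. Each $\psi$ satisfies $|\psi| \le 1$ and $\mathrm{Var}_{\Pi(\mu_{T_n})}(\psi) \le 1$; the initial measure $\nu_n$ chosen for $Y_{T_n}$ has $\bigl\|d\nu_n/d\Pi(\mu_{T_n})\bigr\|_{L^2(\Pi(\mu_{T_n}))} \le c_E$; and the Poincar\'e constant of the drifted Brownian motion in the potential $W * \mu_{T_n}$ is controlled uniformly in $n$ by $C_W$. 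A union bound over the cells gives
$$\mathbb{P}\!\left(\max_i \bigl|\tilde\mu_{[T_n,T_{n+1}]}(M_i) - \Pi(\mu_{T_n})(M_i)\bigr| > \rho_n\right) \le 2 c_E\, N \exp\!\left(-\frac{\Delta T_n\, \rho_n^2}{8 C_P}\right).$$
On the complementary event, I build an explicit transport between the two centered measures by matching the common mass inside each cell (cost per unit mass at most $h_n P(R_n)$, total at most $h_n P(R_n)$) and moving the residual mass of total weight $\le N\rho_n / 2$ across cells (cost per unit mass at most $2 R_n P(R_n)$). Together with the tail contribution above, this yields
$$\mT_P^{c_{T_n}}\!\bigl(\tilde\mu_{[T_n,T_{n+1}]}, \Pi(\mu_{T_n})\bigr) \lesssim h_n P(R_n) + N \rho_n R_n P(R_n) + (\Delta T_n)^{-8 C_W}.$$

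\textbf{Parameter optimization and Borel--Cantelli.} I take $h_n = (\Delta T_n)^{-\gamma_1}$ with $\gamma_1$ chosen just below $1/(5d)$, and $\rho_n = (\Delta T_n)^{-\gamma_2}$ with $\gamma_2 = (d{+}1)\gamma_1$, which is $< 1/2$ as soon as $\gamma_1 < 1/(2(d{+}1))$ (guaranteed by $\gamma_1 < 1/(5d)$). With these choices, the first two error terms are bounded by $(\Delta T_n)^{-\gamma_1}$ up to polylogarithmic factors, and the failure probability becomes $\exp\!\bigl(-c (\Delta T_n)^{1-2\gamma_2}\bigr)$ times a polynomial; since $\Delta T_n \asymp n^{1/2}$, this is summable in $n$, and Borel--Cantelli upgrades the concentration to an almost-sure statement valid for all large $n$. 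Combining with the tail term, one obtains the announced rate $(\Delta T_n)^{-\min(8C_W,\, 1/(5d))}$. The main difficulty is the joint bookkeeping: one must align the cell-size error $h_n P(R_n)$, the cell-count $\times$ deviation error $N\rho_n R_n P(R_n)$, and the exponential tail $e^{-C_W R_n}$ so that their common rate is at most $\min(8C_W, 1/(5d))$, while simultaneously ensuring that the union-bound failure probability is summable along the sublinear-spacing sequence $\Delta T_n \asymp n^{1/2}$.
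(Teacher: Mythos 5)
Your outline reproduces the paper's strategy almost step by step: exponential-tail truncation of $\Pi(\mu_{T_n})$ to a ball of radius $\Theta(\log T_n)$ around $c_{T_n}$, coverage by a grid of cells, the Cattiaux--Guillin deviation bound of Proposition~\ref{p:CG} with the restricted-and-normalized initial law, a transport reconstruction (match inside cells, move the residual across cells, absorb the tail), and Borel--Cantelli over $n$. There is, however, one genuine structural difference, and one quantitative slip you should fix.

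The structural difference: you use a \emph{uniform} deviation threshold $\rho_n$ for all cells and bound $\mathrm{Var}_{\Pi(\mu_{T_n})}(\1_{M_i})\le 1$ trivially. The paper instead uses a \emph{cell-adapted} threshold $\rho_j=\max\bigl(N_n^{-2},\ \Pi(\mu_{T_n})(M_j)/N_n\bigr)$ together with the sharper bound $\mathrm{Var}_{\Pi(\mu_{T_n})}(\1_{M_j})\le \Pi(\mu_{T_n})(M_j)$. This pair of choices is what lets them have both a small exponent in the union bound (via $\rho_j^2/\mathrm{Var}(\psi_j)\ge N_n^{-3}$) and a small total residual mass $\sum_j\rho_j\le 2/N_n$, whereas your uniform $\rho_n$ forces the total residual to scale like $N\rho_n$ and you must compensate by taking smaller cells. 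Both routes get the job done, but they optimize differently; your route is a bit wasteful and this is precisely what produces the second issue below.

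The quantitative slip: you set $\gamma_1$ ``just below'' $1/(5d)$, so your error term $h_n P(R_n)\asymp (\Delta T_n)^{-\gamma_1}(\log T_n)^k$ with $\gamma_1<1/(5d)$ decays \emph{strictly slower} than $(\Delta T_n)^{-1/(5d)}$, and you do not actually reach the announced exponent. But your own summability constraint is $\gamma_2=(d+1)\gamma_1<1/2$, i.e.\ $\gamma_1<\tfrac{1}{2(d+1)}$, and since $\tfrac{1}{5d}<\tfrac{1}{2(d+1)}$ for every $d\ge 1$, you have slack: taking $\gamma_1$ a little \emph{above} $1/(5d)$ (still with $(d+1)\gamma_1<1/2$) kills the polylogarithmic factors $P(R_n)$ and $R_n^{d+1}$ and yields $h_n P(R_n)=o\bigl((\Delta T_n)^{-1/(5d)}\bigr)$ and $N\rho_n R_n P(R_n)=o\bigl((\Delta T_n)^{-1/(5d)}\bigr)$, recovering the stated rate. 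With that correction, together with the exponential tail giving $o\bigl((\Delta T_n)^{-8C_W}\bigr)$, your argument does deliver $O\bigl((\Delta T_n)^{-\min(8C_W,\,1/(5d))}\bigr)$.
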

\begin{proof}
The previous estimates imply that the process $Y_t$ on $[T_n,T_{n+1}]$ almost surely for all $n$ sufficiently big stays inside the ball 
$
\mathcal{U}_{R_n}(c_{T_n}),
$
where $R_n:=3 L_n$. Now, take this ball and cut it into some number $N_n$ parts $M_1,\dots, M_{N_n}$ of diameter less 
than $\varepsilon_n:=\frac{2dR_n}{\sqrt[d]{N_n}}$ (by cubic the grid with the step 
$2R_n/\sqrt[d]{N_n}$, that is decomposing each of the coordinate segments of length $2R_n$ into $\sqrt[d]{N_n}$ parts).
We will choose and fix the number $N_n$ later.

For each of these parts, choose 
$$
\rho_j:=\max \left(\frac{1}{N_n^2}, \frac{\Pi(\mu_{T_n})(M_j)}{N_n} \right).
$$
Let $\psi_j = \1_{M_j}$. Then, the probability that all the empirical measures $\tilde{\mu}_{[T_n,T_{n+1}]}(M_j)$ are $\rho_j$-close to their ``theoretical'' values~$\Pi(\mu_{T_n})(M_j)$ is at least 
$$
1-2c_E \sum_{j=1}^{N_n} \exp \left( - \frac{\rho_j^2 \Delta T_n}{16 C_W Var_{\Pi(\mu_{T_n})} (\psi_j)}\right).
$$
As the variance $Var_{\Pi(\mu_{T_n})} (\psi_j)$  
does not exceed $\Pi(\mu_{T_n})(\psi_j)$, we have a lower bound for the probability by
$$
1-2c_E \sum_{j=1}^{N_n} \exp \left( - \frac{\rho_j \Delta T_n}{16C_W} \cdot \frac{\rho_j}{\Pi(\mu_{T_n}) (\psi_j)}\right) \ge 1-2 N_n c_E \exp \left(- \frac{\Delta T_n}{16 C_W N_n^3} \right),
$$
as $\frac{\rho_j}{\Pi(\mu_{T_n}) (\psi_j)} \ge \frac{1}{N_n}$ and $\rho_j\ge \frac{1}{N_n^2}$.

So, taking $N_n=\sqrt[10]{T_n}\sim (\Delta T_n)^{3/10}$, we see that the series 
$$
\sum_n  N_n \exp \left(- \frac{\Delta T_n}{16C_WN_n^3} \right) \asymp \sum_n (\Delta T_n)^{3/10} \exp \left(- (\Delta T_n)^{1/10} \right)
$$ 
converges, so almost surely for all $n$ sufficiently big, all the closeness conditions on the occupation measures are satisfied: the measures $\tilde{\mu}_{[T_n,T_{n+1}]}(M_j)$ are a.s. $\rho_j$-close to $\Pi(\mu_{T_n})(M_j)$.

Now, let us estimate the $c_{T_n}$-centered distance $\mT_P^{c_{T_n}}(\tilde{\mu}_{[T_n,T_{n+1}]}, \Pi(\mu_{T_n}))$, provided that these conditions are fulfilled. Indeed, first transport inside each $M_j$ the part $\min(\tilde{\mu}_{[T_n,T_{n+1}]}, \Pi(\mu_{T_n}))$: we pay at most $P(3 L_n) \varepsilon_n = O\left((\Delta T_n)^{-\frac{1}{5d}}\right)$. 
Next, bring the exterior part of $\Pi(\mu_{T_n})$ to the ball $\mathcal{U}_{R_n}(c_{T_n})$: due to the exponential decrease estimates, we pay at most $$\int_{R_n}^\infty P(r) \mathrm{d}(1-Ce^{-C_W r}) \sim R_n^{k+1} e^{-C_W R_n} = O\left((\Delta T_n)^{-8C_W}\right)$$ as $R_n = 3 \log T_n$. Finally, let us re-distribute the parts left: we pay at most 
\begin{eqnarray*}
\sum_{j=1}^{N_n} \rho_j R_n P(R_n) &=& R_n P(R_n) \sum_{j=1}^{N_n} \max{\left( \frac{1}{N_n^2}, \frac{\Pi(\mu_{T_n})(M_j)}{N_n} \right)}\\ 
& \le & R_n P(R_n) \sum_{j=1}^{N_n} \left( \frac{1}{N_n^2} + \frac{\Pi(\mu_{T_n})(M_j)}{N_n} \right)\\
&\le & 2R_n P(R_n) \frac{1}{N_n} = O\left( (\Delta T_n)^{-1/5} \right).
\end{eqnarray*}
Adding these three estimates, we obtain the desired $\mathcal{T}_P^{c_{T_n}}(\tilde{\mu}_{[T_n,T_{n+1}]}, \Pi(\mu_{T_n})) = O\left( (\Delta T_n)^{-\beta} \right)$ with $\beta = \min (8C_W, (5d)^{-1})$.
\end{proof}

Putting Lemmas~\ref{l:tilde} and~\ref{l:CG} together, and recalling that $\Delta T_n \sim T_n^{\frac{1}{3}}$, we conclude that almost surely, for all $n$ sufficiently big,
$$
\mT_P^{c_{T_n}} (\mu_{[T_n, T_{n+1}]}, \Pi(\mu_{T_n})) \le T_n^{- \min \left(\frac{8}{3}C_W,\frac{1}{15d} \right)}.
$$
Proposition~\ref{p:one-step} is proven.


\subsubsection{Euler method error control}\label{s:Euler}

\begin{proof}[Proof of Proposition~\ref{p:Euler-smooth}]

We prove the proposition by induction on~$j$. The case $j=i$ is obvious: the only term in the right-hand side is $C_1 h$, being an estimate for the distance to the smoothened convolution:
\begin{eqnarray*}
\mT_P^{c_{T_i}} ( \mu_{T_i}^{(h)}, \mu_{T_i} ) &\le & \int_{\Rn} \int_{\mathcal{U}_h(0)} |v| \cdot P\left((\max (|x-c_{T_i}|,|x+v-c_{T_i}|)\right) \, \frac{\mathrm{d}v}{vol(\mathcal{U}_{h}(0))} \, \mathrm{d}\mu(x) \\ 
&\le & \int_{\Rn} P(|x-c_{T_i}|+h) \cdot h \, \mathrm{d}\mu(x) \le P(h) \|\mu(\cdot + c_{T_i})\|_P \cdot h = C_1 h,
\end{eqnarray*}
provided that $h\le 1$ (because the norm $\|\mu^c_{T_i}\|_P$ is bounded due to the exponential tails of $\mu^c$).

Let us now check the step of induction. Namely, assume that the conclusion holds for some $j\ge i$, and check it for $j+1$. To do this, first shift the center of the translation distance from $c_{T_{j+1}}$ to $c_{T_j}$: from Proposition~\ref{p:expo-decrease}
$$
\mT_P^{c_{T_{j+1}}} (\cdot,\cdot) \le (1+\const |c_{T_{j+1}}-c_{T_j}|) \mT_P^{c_{T_j}} (\cdot,\cdot),
$$
provided that $|c_{T_{j+1}}-c_{T_j}|\le 1$. On the other hand, we have by Lemma~\ref{l:mT_P}
$$
|c_{T_{j+1}}-c_{T_j}|\le \Lip_{K_{\alpha,C}} (c) \cdot \mT_P^{c_{T_j}}(\mu_{T_{j+1}},\mu_{T_j}) \le \const \frac{\Delta T_j}{T_{j+1}},
$$
so finally 
\begin{equation}\label{eq:Euler-first}
\mT_P^{c_{T_{j+1}}} (\cdot,\cdot) \le \left(1+\const \cdot \frac{\Delta T_j}{T_{j+1}}\right) \mT_P^{c_{T_j}} (\cdot,\cdot) \le \left( \frac{T_{j+1}}{T_j} \right)^{A_1} \mT_P^{c_{T_j}} (\cdot,\cdot).
\end{equation}

Now, the map $\Pi$ is Lipschitz on $K_{\alpha,C}$ by Proposition~\ref{prop:exp-decrease}, so for any two measures $\nu_1,\nu_2$ one has 
$$
\mT_P(\Phi_{j}^{j+1} (\nu_1), \Phi_{j}^{j+1} (\nu_2)) \le \left(1+ \frac{\Delta T_j}{T_{j+1}} (\Lip_{K_{\alpha,C}}(\Pi)+1) \right) \mT_P(\nu_1,\nu_2) \le \left( \frac{T_{j+1}}{T_j} \right)^{A_2} \mT_P (\nu_1,\nu_2).
$$

Substituting for $\nu_1$ and $\nu_2$ respectively the translated by $c_{T_j}$ images of measures $\Phi_i^j(\mu_{T_i}^{(h)})$ and $\mu_{T_j}$ respectively, we see that
\begin{equation}\label{eq:Euler-next}
\mT_P^{c_{T_j}} (\Phi_i^{j+1}(\mu_{T_i}^{(h)}),\Phi_j^{j+1}(\mu_{T_j})) \le \left(\frac{T_{j+1}}{T_j} \right)^{A_2} \mT_P^{c_{T_j}} (\Phi_i^{j}(\mu_{T_i}^{(h)}),\mu_{T_j}).
\end{equation}
Now, using that by Proposition~\ref{p:one-step}, 
$$
\mT_P^{c_{T_j}} (\Phi_j^{j+1}(\mu_{T_j}),\mu_{T_{j+1}}) \le \left( \frac{\Delta T_{j}}{T_{j+1}} \right) (\Delta T_j)^{-\beta},
$$
with $\beta = \min (8C_W, (5d)^{-1})$, we see that
\begin{multline}\label{eq:Euler-triangle}
\mT_P^{c_{T_{j+1}}} (\Phi_i^{j+1}(\mu_{T_i}^{(h)}),\mu_{T_{j+1}}) \le \left( \frac{T_{j+1}}{T_j} \right)^{A_1} \mT_P^{c_{T_j}} (\Phi_i^{j+1}(\mu_{T_i}^{(h)}),\mu_{T_{j+1}}) \le 
\\ \le 
\left( \frac{T_{j+1}}{T_j} \right)^{A_1} \left (\mT_P^{c_{T_j}} (\Phi_i^{j+1}(\mu_{T_i}^{(h)}),\Phi_j^{j+1}(\mu_{T_j})) +\mT_P^{c_{T_j}} (\Phi_j^{j+1}(\mu_{T_j}),\mu_{T_{j+1}}) \right) \le 
\\ \le
\left( \frac{T_{j+1}}{T_j} \right)^{A_1} \left (\left( \frac{T_{j+1}}{T_j} \right)^{A_2} \mT_P^{c_{T_j}} (\Phi_i^{j}(\mu_{T_i}^{(h)}),\mu_{T_j}) +\frac{\Delta T_j}{T_{j+1}} (\Delta T_j)^{-\beta} \right)  = 
\\ = 
\left( \frac{T_{j+1}}{T_j} \right)^{A_1+A_2} \mT_P^{c_{T_j}} (\Phi_i^{j}(\mu_{T_i}^{(h)}),\mu_{T_j}) + \left( \frac{T_{j+1}}{T_j} \right)^{A_1} \frac{\Delta T_j}{T_{j+1}} (\Delta T_j)^{-\beta}. 
\end{multline}

Finally, we fix the choice of $A:=A_1+A_2$, and, using the induction assumption, the right-hand side of~\eqref{eq:Euler-triangle} is not greater than
\begin{multline*}
\left( \frac{T_{j+1}}{T_j} \right)^{A}\left(  
\sum_{k=i}^{j-1} \frac{\Delta T_k}{T_{k+1}} (\Delta T_{k})^{-\beta} \left(\frac{T_j}{T_k}\right)^A 
+ C_1 h \left(\frac{T_j}{T_i}\right)^A \right)  + \left( \frac{T_{j+1}}{T_j} \right)^{A_1} \frac{\Delta T_j}{T_{j+1}} (\Delta T_j)^{-\beta} \le 
\\ \le
\sum_{k=i}^{j-1} \frac{\Delta T_k}{T_{k+1}} (\Delta T_{k})^{-\beta} \left(\frac{T_{j+1}}{T_k}\right)^A 
+ C_1 h \left(\frac{T_{j+1}}{T_i}\right)^A + \left( \frac{T_{j+1}}{T_j} \right)^A \frac{\Delta T_j}{T_{j+1}} (\Delta T_j)^{-\beta} = 
\\ =
\sum_{k=i}^j \frac{\Delta T_k}{T_{k+1}} (\Delta T_{k})^{-\beta} \left(\frac{T_{j+1}}{T_k}\right)^A 
+ C_1 h \left(\frac{T_{j+1}}{T_i}\right)^A.
\end{multline*}
The induction step is proved.

\end{proof}


\subsubsection{Decrease of energy}\label{sss:dec-energy}

This section is devoted to the proof of Proposition~\ref{p:exp-decrease}. To estimate the decrease of energy, we will need the following
\begin{lemma}\label{l:maj-varphi}
For any $\mu\in K_{\alpha,C}$, we have $\varphi_\mu(\mu) -\varphi_\mu(\Pi(\mu)) \ge g(\mF(\mu|\rho_\infty)),$ where $$g(E) = \left\{ \begin{array}{l}
	C_7 \frac{E}{|\log E|^k}, \quad 0\le E \le \varepsilon_0<1\\
	\frac{E}{\varepsilon_0} g(\varepsilon_0), \quad \varepsilon_0< E\le \varepsilon_1 \\
	E + (g(\varepsilon_1) -\varepsilon_1), \quad E>\varepsilon_1
        \end{array} 
\right.
$$ is an increasing continuous function, and the constants $C_7,\varepsilon_0,\varepsilon_1$ depend only on $\alpha$ and $C$.
\end{lemma}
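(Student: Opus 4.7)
The plan is to exploit the variational characterization of $\Pi(\mu)$ as the unique minimum of $\varphi_\mu$ (Lemma~\ref{l:fixed-point}) together with the identity~\eqref{eq:lem-varphi}, dealing separately with the three regimes of $E:=\mF(\mu|\rho_\infty)$ defining $g$. Since both sides of the claimed inequality are invariant under translations of $\mu$ ($\mF$ being translation-invariant and $\Pi(\mu(\cdot+a))=\Pi(\mu)(\cdot+a)$), I first reduce to the case $c_\mu=0$, so that $\mu$ lies in $K_{\alpha,C}^0$ and both $\mu$ and $\rho_\infty$ are centered at the origin.

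The heart of the argument, for the small-$E$ regime, is the chain
$$\varphi_\mu(\mu)-\varphi_\mu(\Pi(\mu)) \ \ge\ \varphi_\mu(\mu)-\varphi_\mu(\rho_\infty) \ =\ E \ +\ \tfrac12\,\mathcal{E}_W(\mu-\rho_\infty),$$
where the first step uses that $\Pi(\mu)$ minimises $\varphi_\mu$ and the second is~\eqref{eq:lem-varphi} with $\nu=\rho_\infty$; here I write $\mathcal{E}_W(\sigma):=\iint \sigma(x)W(x-y)\sigma(y)\,\mathrm{d}x\,\mathrm{d}y$. The task thus reduces to showing $|\mathcal{E}_W(\mu-\rho_\infty)|\le 2(E-g(E))$. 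By the standing multiplicative bound $|W(x-y)|\le P(|x|)P(|y|)$, we have $|\mathcal{E}_W(\mu-\rho_\infty)|\le\|\mu-\rho_\infty\|_P^2$. I estimate this dual $P$-norm by splitting at radius $R\sim|\log E|$: inside the ball $\{|x|\le R\}$ one uses $P(R)\sim|\log E|^k$ combined with Pinsker's inequality applied to the dual identity $H(\mu|\rho_\infty)=E-\tfrac12\mathcal{E}_W(\mu-\rho_\infty)$ (obtained from~\eqref{eq:lem-varphi} by exchanging the roles of the background and probe measure), while outside the ball the exponential decay of $\mu,\rho_\infty\in K^0_{\min(\alpha,C_W),C+C_\Pi}$ against the polynomial weight $P$ gives a tail of order $R^k e^{-\alpha R}$, which is negligible once $R$ is proportional to $|\log E|$.

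For the middle regime $\varepsilon_0<E\le\varepsilon_1$, the function $g$ is linear with small slope $g(\varepsilon_0)/\varepsilon_0$, so I rely on the universal bound that $\|\mu-\rho_\infty\|_P$ is uniformly bounded on $K_{\alpha,C}$, whence $|\mathcal{E}_W(\mu-\rho_\infty)|\le M$ and $\varphi_\mu(\mu)-\varphi_\mu(\Pi(\mu))\ge E-M/2$; choosing $\varepsilon_0,\varepsilon_1$ appropriately relative to $M$ and $C_7$ ensures this dominates $(E/\varepsilon_0)g(\varepsilon_0)$. The large regime $E>\varepsilon_1$ then follows from the same universal bound after calibrating $\varepsilon_1$ so that $g(\varepsilon_1)-\varepsilon_1=-M/2$; monotonicity/continuity is then used to glue the three pieces of $g$ into a single increasing function.

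The principal obstacle lies in the small-$E$ argument: the naive Pinsker estimate gives $\|\mu-\rho_\infty\|_P^2\lesssim|\log E|^{2k}\cdot H(\mu|\rho_\infty)$, but $H(\mu|\rho_\infty)$ itself feeds back through $\mathcal{E}_W$ via the identity above, so the bound must be solved self-consistently. Combining the comparability between $\mT_P$ and $W_2$ on $K^0_{\min(\alpha,C_W),C+C_\Pi}$ established in \S\ref{ss:invariant} with Talagrand's inequality~\eqref{eq:W2F} (which links $W_2(\mu,\rho_\infty)$ to $\mF(\mu|\rho_\infty)$ via the displacement convexity of $\mF$) should close the loop and produce the factor $|\log E|^k$ in the denominator of $g$.
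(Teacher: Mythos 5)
Your setup is correct through the identity
\[
\varphi_\mu(\mu)-\varphi_\mu(\Pi(\mu))\ \ge\ \varphi_\mu(\mu)-\varphi_\mu(\rho_\infty)\ =\ E+\tfrac12\,\mathcal{E}_W(\mu-\rho_\infty),
\]
and the ``dual identity'' $H(\mu|\rho_\infty)=E-\tfrac12\mathcal{E}_W(\mu-\rho_\infty)$ is also right. But the small-$E$ argument has a genuine gap: by testing directly against $\rho_\infty$ you have fixed $s=1$ in what the paper treats as a free parameter, and then the quadratic correction $\tfrac12\mathcal{E}_W(\mu-\rho_\infty)$ is \emph{not} negligible compared to $E$. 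It is a quadratic quantity in $\mu-\rho_\infty$ — just like $E$ itself — but weighted by the polynomial $W$; your own chain of estimates makes this explicit: Pinsker inside radius $R\sim|\log E|$ plus exponential tails outside gives $\|\mu-\rho_\infty\|_P\lesssim|\log E|^k\sqrt{H}+E$, hence $|\mathcal{E}_W|\lesssim|\log E|^{2k}(H+E^2)$, and feeding back $H\le E+\tfrac12|\mathcal{E}_W|$ produces $|\mathcal{E}_W|\lesssim|\log E|^{2k}E$, which for small $E$ is \emph{larger} than $E$, not smaller. The $\mT_P\sim W_2^2$ comparison from \S\ref{ss:invariant} does not rescue this, because $\mT_P(\mu,\rho_\infty)$ controls a transport cost, not the dual $P$-norm of the \emph{signed difference} $\mu-\rho_\infty$ — there is no inequality $\|\mu-\rho_\infty\|_P\lesssim\mT_P(\mu,\rho_\infty)$ (already for $P\equiv1$ this would say TV is bounded by $W_1$, which fails for slightly shifted Diracs). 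So the loop does not close, and indeed it cannot: one can build $\mu$ close to $\rho_\infty$ for which $\mathcal{E}_W(\mu-\rho_\infty)$ is negative and of size $\gtrsim|\log E|^k E$, making your lower bound negative.

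The idea the paper uses — and that is missing from your proposal — is to test against a \emph{short} displacement interpolant $\nu_s$, with $s$ a free parameter, rather than against $\rho_\infty$. Displacement convexity gives the linear gain $\varphi_\mu(\mu)-\varphi_\mu(\nu_s)\ge s\,E+\tfrac12\mathcal{E}_W(\nu_s-\mu)$, and the transport representation (with the cut at radius $L$) bounds the quadratic piece by $s^2P(4L)W_2^2(\mu,\rho_\infty)+\text{tail}$, hence by $\tfrac{2}{C_W}s^2P(4L)E+\text{tail}$ after Talagrand~\eqref{eq:W2F}. Because the quadratic term carries an extra factor of $s$ relative to the linear one, choosing $s=\tfrac{C_W}{4P(4L)}\sim|\log E|^{-k}$ makes it exactly half of $sE$, leaving a net gain $\tfrac{C_W}{8P(4L)}E\sim C_7E/|\log E|^k$, which is the claimed $g(E)$. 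Without that free parameter, the sign and size of $\mathcal{E}_W(\mu-\rho_\infty)$ are uncontrollable, and the desired lower bound cannot be obtained. Your treatment of the middle and large regimes (via the uniform bound on the interaction integral, i.e.\ the paper's~\eqref{eq:varphi-free}, and gluing by monotonicity) is correct and matches the paper.
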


We postpone its proof, but we use it as a motivation for the next result, which immediately implies Proposition~\ref{p:exp-decrease}:
\begin{lemma}\label{l:proof-p9}
There exists $n_0$ such that for any $\mu\in K_{\alpha,C}$ and for any $j\ge i\ge n_0$: $$\mF(\Phi_i^j (\mu)|\rho_\infty) \le y(T_j),$$ where $y$ is the solution to 
\begin{equation}\label{eq:ydot}
\dot{y} = -\frac{1}{t} \frac{g(y)}{2}, 
\end{equation}
with the initial condition $y(T_i) = \max (\mF(\mu_{T_i}),1)$.
\end{lemma}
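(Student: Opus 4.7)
The plan is induction on $j\ge i$. The base case $j=i$ is immediate, since by construction $y(T_i)$ dominates $F_i:=\mF(\Phi_i^i(\mu)|\rho_\infty)=\mF(\mu|\rho_\infty)$. For the inductive step, set $\nu_j:=\Phi_i^j(\mu)$, $F_j:=\mF(\nu_j|\rho_\infty)$, and $\lambda_j:=\Delta T_j/T_{j+1}$, so that $\Phi_j^{j+1}(\nu_j)=(1-\lambda_j)\nu_j+\lambda_j\Pi(\nu_j)$. Applying Lemma~\ref{l:equality-varphi} with the triple $(\nu_j,\Pi(\nu_j),\lambda_j)$ gives
\[
F_{j+1}\le F_j-\lambda_j\bigl(\varphi_{\nu_j}(\nu_j)-\varphi_{\nu_j}(\Pi(\nu_j))\bigr)+\tfrac{\lambda_j^2}{2}Q_j,
\]
where $Q_j:=\iint(\nu_j-\Pi(\nu_j))(x)W(x-y)(\nu_j-\Pi(\nu_j))(y)\,\mathrm{d}x\,\mathrm{d}y$.

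My next task is to bound $Q_j$ by a universal constant $R$. By Lemma~\ref{l:K-alpha-C} one has $\nu_j\in K_{\alpha,C}$, while Proposition~\ref{prop:exp-decrease} forces $\Pi(\nu_j)$ to have $C_W$-exponential tails centered at the same point $c_{\nu_j}$. Since $W$ has at most polynomial growth by condition~3) of Theorem~\ref{t:main}, and $P(|x-y|)\le P(|x|)P(|y|)$, the integrand of $Q_j$ is controlled by $P\otimes P$ integrated against measures with common-center exponential tails, giving $Q_j\le R=R(\alpha,C,C_W,k)$. Combined with the energy-gap lower bound from Lemma~\ref{l:maj-varphi}, this yields the discrete recursion
\[
F_{j+1}\le F_j-\lambda_j g(F_j)+\tfrac{\lambda_j^2 R}{2}.
\]

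The heart of the argument is now comparing this recursion with the ODE $\dot y=-g(y)/(2t)$. Since $y$ is decreasing and $g$ is increasing, a direct integration gives
\[
y(T_j)-y(T_{j+1})=\int_{T_j}^{T_{j+1}}\frac{g(y(s))}{2s}\,\mathrm{d}s\le\tfrac{1}{2}\lambda_j(1+\lambda_j)g(y(T_j)).
\]
Under the inductive hypothesis $F_j\le y(T_j)$, this yields
\[
y(T_{j+1})-F_{j+1}\ge(y(T_j)-F_j)+\lambda_j g(F_j)-\tfrac{1}{2}\lambda_j(1+\lambda_j)g(y(T_j))-\tfrac{\lambda_j^2 R}{2}.
\]
Because $g'$ is uniformly bounded on the relevant range $[0,y(T_i)]$, for $\lambda_j$ small the map $F\mapsto\lambda_j g(F)-F$ is strictly decreasing, so the right-hand side above is minimized at $F_j=y(T_j)$, where the requirement collapses to the clean inequality $g(y(T_j))(1-\lambda_j)\ge\lambda_j R$.

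The main obstacle, and the reason for the factor $1/2$ in the ODE, is to verify this final inequality uniformly in $j\ge n_0$: the factor $1/2$ provides just enough slack to absorb the quadratic discretization error. Separation of variables in the ODE shows $y(t)\asymp\exp(-a\sqrt[k+1]{\log t})$ for some $a>0$, and since $g(y)\asymp y/(\log(1/y))^k$ near the origin, $g(y(T_j))$ also decays only subexponentially. In contrast, $\lambda_j\sim T_j^{-2/3}$ decays polynomially, so $g(y(T_j))/\lambda_j\to\infty$, and for $n_0$ sufficiently large the required inequality holds for every $j\ge n_0$; this closes the induction.
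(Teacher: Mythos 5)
Your proof is correct and follows essentially the same strategy as the paper's: the discrete recursion $F_{j+1}\le F_j-\lambda_j g(F_j)+\tfrac{\lambda_j^2R}{2}$ obtained by combining Lemma~\ref{l:equality-varphi} (your bound $Q_j\le R$ is exactly Corollary~\ref{cor:varphi}) with Lemma~\ref{l:maj-varphi}, then an induction comparing with the ODE~\eqref{eq:ydot}, where the factor $1/2$ absorbs both the discretization error and the $O(\lambda_j^2)$ term because $g(y(T_j))/\lambda_j\to\infty$. The only cosmetic difference is that you compare to the ODE by directly integrating and spelling out the monotonicity of $F\mapsto F-\lambda_j g(F)$ via the boundedness of $g'$, whereas the paper passes to $\theta=\log t$ and leaves this step implicit.
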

Proposition~\ref{p:exp-decrease} is its immediate corollary, as the solution of~\eqref{eq:ydot} decreases exponentially for big energies~$y$ and has the form $y(t) = \exp \{-\sqrt[k+1]{\frac{C_7}{2}(k+1) \log (t/T_0)}\}$ for $y\le \varepsilon_0$ (what happens for $t$ large enough). 

We will need the following corollary to Lemma~\ref{l:equality-varphi}:
\begin{corollary}\label{cor:varphi}
For any fixed $\alpha,C$, there exists $C''$ such that for all $\mu\in K_{\alpha,C}$, for all $0<\lambda<1$, $$\mF((1-\lambda)\mu + \lambda \Pi(\mu)|\rho_\infty) \le \mF(\mu|\rho_\infty) -\lambda \left(\varphi_\mu(\mu) -\varphi_\mu(\Pi(\mu)) \right) + C'' \lambda^2.$$
\end{corollary}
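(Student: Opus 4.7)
The plan is to obtain this statement as a direct application of Lemma~\ref{l:equality-varphi} with $\nu = \Pi(\mu)$, reducing the task to bounding the quadratic interaction term
\[
Q(\mu) := \tfrac{1}{2}\iint (\mu-\Pi(\mu))(x)\, W(x-y)\, (\mu-\Pi(\mu))(y)\, \mathrm{d}x\, \mathrm{d}y
\]
uniformly over $\mu \in K_{\alpha,C}$. Once $|Q(\mu)| \le C''$ is shown, substituting into Lemma~\ref{l:equality-varphi} yields exactly the claimed inequality.

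The first key point is that the kernel $W(x-y)$ is translation-invariant, so if $\tilde\mu(A) := \mu(A+c_\mu)$ and $\tilde\sigma(A) := \Pi(\mu)(A+c_\mu)$ denote the images of $\mu$ and $\Pi(\mu)$ under the common shift by $-c_\mu$, then
\[
Q(\mu) = \tfrac{1}{2}\iint (\tilde\mu-\tilde\sigma)(x)\, W(x-y)\, (\tilde\mu-\tilde\sigma)(y)\, \mathrm{d}x\, \mathrm{d}y.
\]
Using the polynomial domination $|W(x-y)| \le P(|x-y|) \le P(|x|)P(|y|)$ (the submultiplicativity noted after Theorem~\ref{t:main}), we obtain
\[
|Q(\mu)| \;\le\; \tfrac{1}{2}\left( \int P(|x|)\, \mathrm{d}|\tilde\mu - \tilde\sigma|(x)\right)^{\!2} \;\le\; \tfrac{1}{2}\bigl(\|\tilde\mu\|_P + \|\tilde\sigma\|_P\bigr)^2.
\]

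It remains to bound $\|\tilde\mu\|_P$ and $\|\tilde\sigma\|_P$ uniformly. By definition of $K_{\alpha,C}$, the measure $\tilde\mu = \mu^c$ belongs to $K_{\alpha,C}^0$, so $\tilde\mu(\{|y|>r\}) \le C e^{-\alpha r}$; integrating the polynomial $P$ against this tail gives a uniform bound $\|\tilde\mu\|_P \le M_1(\alpha,C,P)$. Similarly, Proposition~\ref{prop:exp-decrease} provides $\tilde\sigma(\{|y|>r\}) = \Pi(\mu)(\{|x-c_\mu|>r\}) \le C_\Pi e^{-C_W r}$, yielding $\|\tilde\sigma\|_P \le M_2(C_W,C_\Pi,P)$. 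Setting $C'' := \tfrac{1}{2}(M_1+M_2)^2$ concludes the argument.

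There is essentially no obstacle here beyond assembling the already-established tail estimates: the content of the corollary is entirely that $Q(\mu)$ is uniformly bounded on $K_{\alpha,C}$, and both the exponential tail of $\mu^c$ (by hypothesis) and of $\Pi(\mu)$ around $c_\mu$ (by Proposition~\ref{prop:exp-decrease}) are exactly what is needed to absorb the polynomial growth of $W$.
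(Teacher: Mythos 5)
Your proof is correct and follows the same route as the paper: apply Lemma~\ref{l:equality-varphi} with $\nu=\Pi(\mu)$ and observe that the coefficient of $\lambda^2$ is uniformly bounded on $K_{\alpha,C}$. The paper states this last fact in one line; your proposal simply supplies the details (translation by $c_\mu$, submultiplicativity of $P$, and the uniform exponential-tail bounds for $\mu^c$ and for $\Pi(\mu)$ around $c_\mu$ from Proposition~\ref{prop:exp-decrease}) that justify the uniform bound.
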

\begin{proof}
For $\mu\in K_{\alpha,C}$, the integral that is the coefficient before $\lambda^2$ is uniformly bounded.
\end{proof}

Let us now prove the previous lemmas.
\begin{proof}[Proof of Lemma~\ref{l:proof-p9}]
Recall that, due to Corollary~\ref{cor:varphi}, we have once $\mu\in K_{\alpha,C}$, $$\mF((1-\lambda)\mu + \lambda \Pi(\mu)|\rho_\infty) \le \mF(\mu|\rho_\infty) -\lambda \left(\varphi_\mu(\mu) -\varphi_\mu(\Pi(\mu)) \right) + C'' \lambda^2.$$ Now note that, if $n_0$ is chosen sufficiently big, we have for any $j$:
\begin{equation}\label{eq:g-y}
C'' \frac{\Delta T_j}{T_{j+1}} \le \frac{g(y(T_j))}{3}.
\end{equation}
Indeed, the left-hand side of \eqref{eq:g-y} decreases as $\frac{1}{j}$, while its right-hand side decreases as $\exp \{-\sqrt[k+1]{\frac{C_7}{2} (k+1) \log T_j}\} \gg \frac{1}{j}$. Now, for every $\check{\mu}_j := \Phi_i^j(\mu)$, we have $\check{\mu}_j \in K_{\alpha,C}$ due to Lemma~\ref{l:K-alpha-C} and hence due to Lemma~\ref{l:maj-varphi}: $$\varphi_{\check{\mu}_j}(\check{\mu}_j) - \varphi_{\check{\mu}_j}(\Pi(\check{\mu}_j)) \ge g(\mF(\check{\mu}_j|\rho_\infty)).$$ Hence, proving the statement of the lemma by induction on $j$, we have to deduce from $\mF(\check{\mu}_j|\rho_\infty) \le y(T_j)$ the analogous statement for $\check{\mu}_{j+1}$, given that $$\mF(\check{\mu}_{j+1}|\rho_\infty) \le y(T_j) -g(y(T_j)) \frac{\Delta T_j}{T_{j+1}} + C'' \left(\frac{\Delta T_j}{T_{j+1}} \right)^2 \le y(T_j) - \frac{2}{3}g(y(T_j))\frac{\Delta T_j}{T_{j+1}}.$$
Let $\theta_j = \log T_j$. Then, $\Delta \theta_j := \theta_{j+1}-\theta_j \le \frac{4}{3}\frac{\Delta T_j}{T_{j+1}}$ for all $j$ large enough. So, once again asking $n_0$ to be chosen sufficiently big, we have $$\mF(\check{\mu}_{j+1}|\rho_\infty) \le y(T_j) - \frac{2}{3}\cdot \frac{3}{4}g(y(T_j)) \Delta \theta_j = \tilde{y}(\theta_j) - \frac{g(\tilde{y}(\theta_j))}{2}\Delta \theta_j,$$ where $\tilde{y}(\theta)=y(e^\theta)$. We conclude by noticing that $g(y)$ is an increasing function of $y$. So, as $\tilde{y}(\theta)$ is solution to the equation $\tilde{y}(\theta)' = -\frac{g(\tilde{y}(\theta))}{2}$, we have $$\tilde{y}(\theta_j) -\frac{g(\tilde{y}(\theta_j))}{2} \Delta \theta_j \le \tilde{y}(\theta_{j+1}),$$ hence $\mF(\check{\mu}_{j+1}|\rho_\infty) \le \tilde{y}(\theta_{j+1}) = y(T_{j+1})$, thus proving the induction step.
\end{proof}

\begin{proof}[Proof of Lemma~\ref{l:maj-varphi}]
Note first that, for $\mu\in K_{\alpha,C}$, the integral $\iint (\mu-\Pi(\mu))(\mathrm{d}x) W(x-y) (\mu-\Pi(\mu))(\mathrm{d}y)$ is bounded by a uniform constant. Thus, due to Lemma~\ref{l:equality-varphi}, $\varphi_\mu(\mu)-\varphi_\mu(\Pi(\mu))$ admits a lower bound
\begin{equation}\label{eq:varphi-free}
 \varphi_\mu(\mu) -\varphi_\mu(\Pi(\mu)) \ge \mF(\mu|\rho_\infty) -C_\Delta
\end{equation}
with the constant $C_\Delta$ being uniform over all $\mu\in K_{\alpha,C}$.

Now, let us give another way to estimate the difference $\varphi_\mu(\mu) -\varphi_\mu(\Pi(\mu))$. Indeed, $\Pi(\mu)$ is the global minimiser of $\mF$, hence for any measure $\rho$, we have
\begin{equation}\label{eq:varphi-min}
\varphi_\mu(\mu) -\varphi_\mu(\Pi(\mu)) \ge \varphi_\mu(\mu) -\varphi_\mu(\rho).
\end{equation}
Recall that the free energy functional $\mF$ is displacement convex. Denote by $\xi_s = (1-s)\xi_0+s\xi_1$, $0\le s\le 1$, the quadratic Wasserstein optimal transport between $\mu=\{$law of~$\xi_0\}$ and $\rho_\infty(\cdot +\mu)=\{$law of~$\xi_1\}$ and let $\nu_s=\{$law of~$\xi_s\}$. Then, $$\mF(\nu_s|\rho_\infty) \ge (1-s) \mF(\mu|\rho_\infty).$$
Thus, we have due to Lemma~\ref{l:equality-varphi},
\begin{eqnarray*}
\varphi_\mu(\mu) -\varphi_\mu(\nu_s) = \mF(\mu|\rho_\infty) -\mF(\nu_s|\rho_\infty) + \frac{1}{2}\iint (\nu_s-\mu)(\mathrm{d}x) W(x-y)(\nu_s-\mu)(\mathrm{d}y)\\
\ge s\mF(\mu|\rho_\infty)+ \frac{1}{2}\iint (\nu_s-\mu)(\mathrm{d}x) W(x-y)(\nu_s-\mu)(\mathrm{d}y).
\end{eqnarray*}
Let us now estimate the second term in the right-hand side of this inequality. Indeed, let $(\eta_0,\eta_1)$ be an independent copy of $(\xi_0,\xi_1)$. Then 
\begin{eqnarray*}
\iint W(x-y) (\nu_s-\mu)(\mathrm{d}x) (\nu_s-\mu)(\mathrm{d}y)=\\ 
= \mathbb{E} \left[W(\xi_0 - \eta_0) - W(\xi_s -\eta_0) - W(\xi_0-\eta_s) + W(\xi_s-\eta_s) \right].
\end{eqnarray*} 
For any fixed $L$, we can divide this expectation into two parts: the one corresponding to $\underset{i,j\in \{0,1\}}{\max} (|\xi_i|,|\eta_j|) >L$ and the one with $|\xi_i|,|\eta_i|\leq L$ for $i=0,1$. We also remind that $\nu_i \in K_{\alpha, C_2}$ for $i=0,1$ and that $P$ controls $W$ as well as its first and second derivatives. So, there exists a positive constant $\tilde{C}$ such that 
\begin{eqnarray*}
& & \left|\mathbb{E} \left[W(\xi_0 - \eta_0) - W(\xi_s -\eta_0) - W(\xi_0-\eta_s) + W(\xi_s-\eta_s) \right] \right|\\
&\leq & \left|\mathbb{E} \left[W(\xi_0 - \eta_0) - W(\xi_s -\eta_0) - W(\xi_0-\eta_s) + W(\xi_s-\eta_s) \right]\1_{\left\{\underset{i,j\in \{0,1\}}{\max} (|\xi_i|,|\eta_j|) \leq L\right\}} \right|\\ 
&+& \int_L^\infty W(2l) \, \mathrm{d} F_{\max(\xi_0,\xi_1,\eta_0,\eta_1)}(l) \\
&\leq & \mathbb{E} \left[ \max_{|x|\leq 4L} P(|x|) |\xi_0 -\xi_s| |\eta_0-\eta_s|\1_{\left\{\underset{i,j\in \{0,1\}}{\max} (|\xi_i|,|\eta_j|) \leq L\right\} } \right] + 4\int_L^\infty P(2l) \, \mathrm{d}(1-C_2 e^{-\alpha l})^4 \\
&\leq & s^2 P(4L) W_2^2 (\nu_0,\nu_1)+ \tilde{C}P(2L) e^{-\alpha L}.
\end{eqnarray*}
So, using the already mentioned comparison $W_2^2(\mu, \rho_\infty) \leq \frac{2}{C_W} \mF(\mu|\rho_\infty)$, we have 
\begin{eqnarray*}
\varphi_\mu(\mu) - \varphi_\mu(\nu_s) &\geq & s \mF(\mu|\rho_\infty) - s^2 P(4L) W_2^2 (\mu,\rho_\infty) - \tilde{C}P(2L) e^{-\alpha L}\\
&\geq & s \mF(\mu|\rho_\infty) - \frac{2}{C_W} s^2 P(4L) \mF(\mu|\rho_\infty) - \tilde{C}P(2L) e^{-\alpha L}.
\end{eqnarray*}
We decide from now on to fix $s= \frac{C_W}{4P(4L)}$, with the choice of $L$ to be fixed later. Then, $s -\frac{2}{C_W}s^2 P(4L) = s/2$ and
\begin{equation}\label{eq:4*}
\varphi_\mu(\mu) - \varphi_\mu(\nu_s) \ge \frac{C_W}{8P(4L)} \mF(\mu|\rho_\infty) - \tilde{C}P(2L) e^{-\alpha L}.
\end{equation}
For $\mF(\mu|\rho_\infty)$ sufficiently small, fixing $L= \frac{2}{\alpha} |\log \mF(\mu|\rho_\infty)|$, we have
\begin{equation}\label{eq:5*}
\frac{C_W}{16 P(4L)}\mF(\mu|\rho_\infty) \ge \tilde{C}P(2L) e^{-\alpha L}
\end{equation}
and hence the right-hand side of \eqref{eq:4*} is estimated from below by
$$\frac{C_W}{16 P(4L)}\mF(\mu|\rho_\infty) \ge \frac{C_7}{|\log \mF(\mu|\rho_\infty)|^k} \mF(\mu|\rho_\infty).$$
So taking $g(E) := \frac{C_7}{|\log E|^k} E$ for such values of $E=\mF(\mu|\rho_\infty)$, we have for such $E$'s the conclusion of lemma satisfied. Next, fixing $\varepsilon_0$ to be such that~\eqref{eq:5*} is satisfied for $\mF(\mu|\rho_\infty)\le \varepsilon_0$, and for any $\mF(\mu|\rho_\infty)\ge \varepsilon_0$, choosing the same $L$ as for $\mF(\mu|\rho_\infty)= \varepsilon_0$, we have $$\frac{C_W}{8P(4L)} \mF(\mu|\rho_\infty) - \tilde{C}P(2L) e^{-\alpha L} \ge \frac{\mF(\mu|\rho_\infty)}{\varepsilon_0} g(\varepsilon_0),$$ what allows to deduce
\begin{equation}\label{eq:6*}
\varphi_\mu(\mu) -\varphi_\mu(\Pi(\mu)) \ge \left\{\begin{array}{ll}
					g(\mF(\mu|\rho_\infty))\quad \text{if} \quad \mF(\mu|\rho_\infty)\le \varepsilon_0\\
					\frac{\mF(\mu|\rho_\infty)}{\varepsilon_0} g(\varepsilon_0) \quad \text{if} \quad \mF(\mu|\rho_\infty)> \varepsilon_0
                                            \end{array}
\right.
\end{equation}
Finally, taking the maximum between the right-hand side of~\eqref{eq:varphi-free} and~\eqref{eq:6*}, we obtain the desired conclusion.
\end{proof}

\subsection{Proof of Theorem \ref{t:centers}}\label{s:center}
As it has been already shown in~\eqref{eq:center-speed}, we have $$|c_{T_{n+1}} - c_{T_n}|\le \int_{T_n}^{T_{n+1}} \frac{P(|X_t-c_t|)}{C_W t} \mathrm{d}t \le \int_{T_n}^{T_{n+1}} \frac{P(L_{n} +|c_t - c_{T_n}|)}{C_W t} \mathrm{d}t \le P(L_{n}+C_3) \frac{\Delta T_n}{T_n}.$$
Thus, almost surely one has $osc_{t\in [T_n, T_{n+1}]} c_t \to 0$ as $n\to \infty$. So, to prove Theorem~\ref{t:centers}, it suffices to show that the sequence~$c_{T_n}$ converges almost surely.

Now, let us estimate the distance $c_{T_{n+1}}-c_{T_n}$. Indeed, 
$$
\mu_{T_{n+1}} = \mu_{T_n} + \frac{\Delta T_n}{T_{n+1}} (\mu_{[T_n,T_{n+1}]} - \mu_{T_n}).
$$
Translating $c_{T_n}$ to the origin, using the decrease estimates of \S\ref{s:discretization} and recalling that $c(\cdot):K_{\alpha,C}^0\to\Rn$ is a $\mT_P$-Lipschitz function, we see that
$$
|c_{T_{n+1}}-c_{T_n}|\le \Lip_{K_{\alpha,C}^0}(c)\cdot \frac{\Delta T_n}{T_{n+1}} \cdot \mT_P^{c_{T_n}} (\mu_{[T_n,T_{n+1}]}, \mu_{T_n}).
$$
As in~\S\ref{s:Euler}, the distance in the right-hand side can be estimated as a sum of two distances:
\begin{equation}\label{eq:triangle}
\mT_P^{c_{T_n}} (\mu_{[T_n,T_{n+1}]}, \mu_{T_n})\le \mT_P^{c_{T_n}} (\mu_{[T_n,T_{n+1}]}, \Pi(\mu_{T_n})) + \mT_P^{c_{T_n}} (\Pi(\mu_{T_n}), \mu_{T_n}).
\end{equation}
We already have an estimate for the first term in this sum:
$$
\mT_P^{c_{T_n}} (\mu_{[T_n, T_{n+1}]}, \Pi(\mu)) \le T_n^{- \min \left(\frac{8}{3}C_W,\frac{1}{15d} \right)}.
$$
On the other hand, the limit density $\rho_{\infty}$ is a fixed point of the map $\Pi$. And the map $\Pi$ being Lipschitz on $K_{\alpha,C}^0$, the second summand in~\eqref{eq:triangle} can be estimated as
$$
\mT_P^{c_{T_n}} (\Pi(\mu_{T_n}), \mu_{T_n}) \le (\Lip_{K_{\alpha,C}^0}(\Pi)+1) \cdot \mT_P (\mu_{T_n}^c, \rho_{\infty}).
$$
The latter distance is already estimated in the proof of Theorem~\ref{t:centered}: almost surely for $n$ sufficiently big, we have 
$$
\mT_P (\mu_{T_n}^c, \rho_{\infty}) \le \exp \{-a \sqrt[k+1]{\log T_n}\}.
$$
Finally, adding the estimates for the first and the second terms in~\eqref{eq:triangle}, we obtain that for all $n$ sufficiently big,
$$
\mT_P^{c_{T_n}} (\mu_{[T_n,T_{n+1}]}, \mu_{T_n}) \le T_n^{- \min \left(\frac{8}{3}C_W,\frac{1}{15d} \right)} + (\Lip_{K_{\alpha,C}^0}(\Pi)+1) \exp \{-a \sqrt[k+1]{\log T_n}\}
$$
and hence 
$$
|c_{T_{n+1}}-c_{T_n}|\le  \Lip_{K_{\alpha,C}^0}(c)\cdot \frac{\Delta T_n}{T_{n+1}} \left(T_n^{- \min \left(\frac{8}{3}C_W,\frac{1}{15d} \right)} + (\Lip_{K_{\alpha,C}^0}(\Pi)+1) \exp \{-a \sqrt[k+1]{\log T_n}\} \right).
$$
We choose $T_n = n^{3/2}$ and so $\frac{\Delta T_n}{T_n} \asymp n^{-1}$. Hence 
$$\sum_n |c_{T_{n+1}}-c_{T_n}| \le \const \sum_n \frac{1}{n^{1+ \min (4C_W, 1/(10d))}} + \const \sum_n \frac{\exp \{-a\sqrt[k+1]{\frac{3}{2}\log n}\}}{n}.
$$  
Both series in the right-hand side converge, and thus the series $\sum_n |c_{T_{n+1}}-c_{T_n}|$ converges almost surely. This concludes the proof.

\section*{Appendix 1: Singularity at $t=0$}\label{ss:x-mu-0}

Let us now prove that a solution to the equation~\eqref{eq:AABM1} with any initial
condition at $t=0$ (where the equation has a singularity) exists
and is unique. 

\begin{proposition}\label{l:existence-0}
For any $x_0$ and almost every trajectory~$B_t$ of the Brownian
motion, a (continuous at $t=0$) solution $X_t$ to the
equation~\eqref{eq:AABM1} with the initial condition $X_0=x_0$
exists on all the interval~$[0,+\infty)$ and is unique.
\end{proposition}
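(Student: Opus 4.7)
The proof reduces to analysing the equation on an arbitrarily small interval $[0, T_0]$, because global existence and uniqueness on $[r,\infty)$ for every $r>0$ is already established in Proposition~\ref{l:MBAA-global}. The key local observation is that, by spherical symmetry and $C^{1}$-smoothness, $\nabla W(0)=0$, so the integrand $\nabla W(X_t-X_s)$ is comparable in size to $X_t-X_s$; in particular, by the H\"older regularity of $B$, the formal drift $\frac{1}{t}\int_0^t \nabla W(X_t-X_s)\,ds$ is bounded as $t\to 0$, and the integral equation $X_t=x_0+\sqrt{2}B_t-\int_0^t \frac{1}{u}\int_0^u\nabla W(X_u-X_s)\,ds\,du$ makes sense.

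\emph{Existence.} For each $r\in(0,T_0]$, I will define an approximating process $X^{(r)}$ by setting $X^{(r)}_t:=x_0$ on $[0,r]$ and, on $[r,\infty)$, by the unique solution to the SDE with initial condition $X^{(r)}_r=x_0$ --- this is legitimate since the artificially frozen segment $[0,r]$ contributes only the bounded extra term $\frac{r}{t}\nabla W(X^{(r)}_t-x_0)$ to the drift for $t\ge r$. The plan is to show that $\{X^{(r)}\}$ is Cauchy in $C([0,T_0],\Rn)$ as $r\to 0^+$. For $0<r'<r\le T_0$ set $\Delta_t:=X^{(r)}_t-X^{(r')}_t$. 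On $[0,r]$, $|\Delta_t|=|X^{(r')}_t-x_0|$ is dominated by $\sqrt{2}\sup_{s\le r}|B_s|$ plus a drift of order $r$, hence tends to $0$ uniformly as $r\to 0$. On $[r,T_0]$, since the Brownian parts cancel,
\begin{equation*}
\frac{d}{dt}|\Delta_t|^2 = -\frac{2}{t}\Delta_t\cdot\int_0^t\bigl[\nabla W(X^{(r)}_t-X^{(r)}_s)-\nabla W(X^{(r')}_t-X^{(r')}_s)\bigr]\,ds.
\end{equation*}
Splitting the bracket as $[\nabla W(X^{(r)}_t-X^{(r)}_s)-\nabla W(X^{(r')}_t-X^{(r)}_s)]+[\nabla W(X^{(r')}_t-X^{(r)}_s)-\nabla W(X^{(r')}_t-X^{(r')}_s)]$, the first contributes at most $-2C_W|\Delta_t|^2$ (uniform convexity of $W$) and the second is bounded in norm by $\frac{C}{t}|\Delta_t|\int_0^t|\Delta_s|\,ds$ (local Lipschitzness of $\nabla W$, trajectories staying in a compact set by Proposition~\ref{p:OU-comparison}). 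A Gronwall-type argument then propagates the initial smallness of $\sup_{[0,r]}|\Delta_t|$ to all of $[0,T_0]$, yielding a limit $X$ which passes to the limit in the integral form of the equation and satisfies $X_0=x_0$.

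\emph{Uniqueness.} If $X,Y$ are two solutions continuous at $0$ with $X_0=Y_0=x_0$ driven by the same Brownian motion, the same splitting applied to $\Delta_t:=X_t-Y_t$ (which has no martingale part) yields
\begin{equation*}
\frac{d}{dt}|\Delta_t|^2 \le -2C_W|\Delta_t|^2+\frac{2C}{t}|\Delta_t|\int_0^t|\Delta_s|\,ds.
\end{equation*}
Writing $M_t:=\sup_{s\le t}|\Delta_s|^2$ and using $|\Delta_s|\le \sqrt{M_t}$ together with $2ab\le a^2+b^2$, this turns into an estimate of the form $M_t\le C' tM_t$ on a small interval, which --- combined with $\Delta_0=0$ --- forces $M_t\equiv 0$. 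Concatenating with the already-proved pathwise uniqueness on $[T_0,\infty)$ finishes the argument.

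\emph{Main obstacle.} The delicate point is the existence part on the window $[0,r]$: $X^{(r)}$ is artificially held at $x_0$ there while the true process already responds to the Brownian increments, so one must control both the mismatch $\sup_{[0,r]}|\Delta_t|$ \emph{and} the contamination of the empirical measure $\mu^{X^{(r)}}_t$ by the $r$-long spike at $x_0$. The identity $\nabla W(0)=0$ is precisely what saves the day: any $x_0$-spike in the empirical measure produces a drift of order $|X_t-x_0|$, and this smallness combined with the $C_W$-damping from uniform convexity is what closes the Gronwall estimate uniformly in $r$.
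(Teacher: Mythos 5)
Your proposal is correct but follows a genuinely different route from the paper. The paper's own proof is a direct Banach fixed-point argument: it picks $\delta_1$ so small that the Brownian path stays in $\mathcal{U}_{1/2}(x_0)$ and $\delta_1\sup_{|x|\le 2}|\nabla W(x)|\le 1/3$, shows that the Picard-type map $\chi:X\mapsto\tilde X$ (where $\tilde X$ solves the drift equation with the occupation measure of $X$ frozen in) preserves the space of continuous paths from $[0,\delta]$ into $\mathcal{U}_1(x_0)$, and then uses the local Lipschitz constant of $\nabla W$ on $\mathcal{U}_2(0)$ to make $\chi$ a contraction for $\delta\,\Lip(W)<1/3$. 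Your argument instead freezes the process on $[0,r]$, uses the already established global theory for $t\ge r$, and shows the family $\{X^{(r)}\}$ is Cauchy in $C([0,T_0])$ by a Gronwall estimate obtained from the same splitting $[\nabla W(a)-\nabla W(b)]=[\text{variation in }X_t]+[\text{variation in }X_s]$. Both routes hinge on the same structural facts --- local Lipschitzness of $\nabla W$ and the fact that the drift $\frac1t\int_0^t\nabla W(X_t-X_s)\,ds$ stays bounded as $t\to0$ along continuous paths --- so neither is intrinsically sharper; the paper's version is shorter and self-contained, while yours shows cleanly how the $t=0$ problem reduces to the $t\ge r$ theory and to a stability estimate. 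Two small remarks: (i) the identity $\nabla W(0)=0$, while true and tidy, is not actually what ``saves the day'' --- the paper's proof never uses it; mere boundedness of $\nabla W$ near $0$ already resolves the $t=0$ singularity, and your Gronwall estimate would close even without the dissipative $-2C_W|\Delta_t|^2$ term (just replace it by $+C|\Delta_t|^2$ from local Lipschitzness). (ii) The appeal to Proposition~\ref{p:OU-comparison} for a compact set containing all the $X^{(r)}$ needs a word of care, since that proposition constructs $Z$ with a fresh auxiliary Brownian motion; for the local problem it is cleaner to bootstrap directly --- while $|X^{(r)}_t-x_0|\le R$ the drift is bounded, hence $X^{(r)}$ cannot exit $\mathcal{U}_R$ on $[0,T_0]$ for $R$ fixed in terms of $\sup_{[0,T_0]}|B|$ --- and then the local Lipschitz constant $C$ in your estimates is uniform in $r$. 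With those two points tightened, your proof is complete.
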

\begin{proof}
As Proposition~\ref{l:MBAA-global} provides us global existence and
uniqueness of solutions, starting from any arbitrary positive time
$r>0$, it suffices to check the existence and uniqueness on some
interval~$[0,\delta)$. For the sake of simplicity of notation, suppose that $x_0=0$.

Let $\delta_1>0$ be such that for all $0\le t\le \delta_1$, $\vert B_t \vert \le \frac{1}{2}$ and $\delta_1 \sup_{\vert x\vert \le 2} \vert \nabla W(x)\vert \le \frac{1}{3}$. We work on the trajectories, which are staying inside $\mathcal{U}_1(0)$, the unit ball centered in $x_0=0$. So, we consider $X_\bullet : [0,\delta_1)\rightarrow \mathcal{U}_1(0)$, $t\mapsto X_t$. Denote by $\mu_s^X$ the empirical measure of the process $X$. Then, the application $\chi: X\mapsto \tilde{X}$ is such that 
$$
\tilde{X}_t = B_t +\int_0^t \nabla W*\mu_s^{X}(\tilde{X}_s) \mathrm{d}s,
$$ 
is well-defined on this space, and $\tilde{X}_t$ also remains stuck in~$\mathcal{U}_1(0)$. Indeed, for any time $t\le \delta_1$, such that the solution~$\tilde{X}$ is defined on~$[0,t]$ and stays in~$\mathcal{U}_1(0)$, we have 
\begin{eqnarray*}
\left\vert \int_0^t \nabla W*\mu_s^{X}(\tilde{X}_s) \, \mathrm{d}s \right\vert &=&  \left\vert \int_0^t \frac{1}{s} \int_0^s  \nabla W(\tilde{X}_s-X_u)  \, \mathrm{d}u \, \mathrm{d}s \right\vert \\
&\le& \int_0^t \frac{1}{s} \int_0^s \sup_{\vert x \vert \le 2} \vert \nabla W(x) \vert \, \mathrm{d}u \, \mathrm{d}s \le \delta_1\sup_{\vert x \vert \le 2} \vert \nabla W(x) \vert \le \frac{1}{3}.
\end{eqnarray*}
Thus, if there existed a time $t_0 \le \delta_1$ such that $\vert \tilde{X}_{t_0} \vert \ge 7/8$ for the first time, then we would see that $\vert \tilde{X}_{t_0} \vert \leq 1/2 
 + 1/3$, which would contradict the bound $\vert \tilde{X}_{t_0} \vert \ge 7/8$. 
So, $\tilde{X}$ stays in $\mathcal{U}_1(0)$ for any $0\le t\le \delta_1$. 

Let us now show that for $\delta<\delta_1$ sufficiently small, the map~$\chi$ is a contraction on the space of continuous maps $X_{\bullet}$ from~$[0,\delta]$ to~$\mathcal{U}_1(0)$ with $X_0=0$. Indeed, consider now two trajectories $X^{(1)}$ and $X^{(2)}$, realizing a coupling with the same Brownian motion, and their respective images (by $\chi$) $\tilde{X}^{(1)}$ and $\tilde{X}^{(2)}$. Then, denoting by $\Lip(W)$ the Lipschitz constant of $\nabla W$ on the 2-radius ball, $\Lip(W):= \sup \{||\nabla^2 W(x)||: \vert x \vert \le 2\}$, we have
\begin{eqnarray*}
\vert \tilde{X}^{(1)}_t - \tilde{X}^{(2)}_t \vert &=& 
\left\vert \int_0^t \nabla W*\mu_s^{X^{(1)}}(\tilde{X}_s^{(1)}) \mathrm{d}s -  \int_0^t \nabla W*\mu_s^{X^{(2)}}(\tilde{X}_s^{(2)}) \mathrm{d}s \right\vert \\ 
&=& \left\vert \int_0^t \frac{1}{s} \int_0^s  \nabla W(\tilde{X}_s^{(1)}-X_u^{(1)}) -  \nabla W(\tilde{X}_s^{(2)}-X_u^{(2)}) \, \mathrm{d}u \, \mathrm{d}s \right\vert \\
&\le & \int_0^t \frac{1}{s} \int_0^s  \vert  \nabla W(\tilde{X}_s^{(1)}-X_u^{(1)}) -  \nabla W(\tilde{X}_s^{(2)}-X_u^{(2)})  \vert\, \mathrm{d}u \, \mathrm{d}s \\
&\le & \int_0^t \frac{1}{s} \int_0^s  \Lip(W) (\vert  \tilde{X}_s^{(1)} - \tilde{X}_s^{(2)} \vert + \vert  X_u^{(1)} - X_u^{(2)}  \vert )\, \mathrm{d}u \, \mathrm{d}s \\
&\le& t \Lip(W) (\vert \vert  \tilde{X}^{(1)} - \tilde{X}^{(2)} \vert\vert_{C([0,\delta])} + \vert \vert  X^{(1)} - X^{(2)} \vert \vert_{C([0,\delta])} ), 
\end{eqnarray*}
where $\vert\vert X \vert\vert_{C([0,\delta])}$ is the norm of $X$ on the space $C([0,\delta])$. As $t \le \delta$, we conclude that $\vert \vert  \tilde{X}^{(1)} - \tilde{X}^{(2)} \vert\vert_{C([0,\delta])} \le  \delta \, \Lip(W) (\vert \vert  \tilde{X}^{(1)} - \tilde{X}^{(2)} \vert\vert_{C([0,\delta])} + \vert \vert  X^{(1)} - X^{(2)} \vert\vert_{C([0,\delta])})$. As soon as $\delta \, \Lip(W)< 1$, we have $$\vert \vert  \tilde{X}^{(1)} - \tilde{X}^{(2)} \vert\vert_{C([0,\delta])} \le \frac{\delta \, \Lip(W)}{1-\delta \, \Lip(W)} \vert \vert  X^{(1)} - X^{(2)} \vert\vert_{C([0,\delta])}.$$ We choose $\delta$ such that  $\delta \, \Lip(W) < 1/3$ and then $\chi$ is a contraction, as stated, with $\Lip(\chi) \le 1/2$.  
So, we have obtained existence and uniqueness of the solution
on~$[0,\delta]$.
\end{proof}

\section*{Appendix 2: Non-symmetric counter-example}\label{ss:non-sym}

We end this paper with an example showing that for a non-symmetric interaction potential $W$, the conclusion of Theorem~\ref{t:main} does not hold. 

Consider a non-symmetric quadratic interaction potential $W(x) = \frac{1}{2}(x-1)^2$. Then, the averages of the process $(X_t)_t$ defined by~\eqref{eq:AABM1}, $\frac{1}{t} \int_0^t X_s \mathrm{d}s = c_t -1$ tend to $+\infty$. 

To motivate this behaviour, heuristically, we first note that, for any finite-variance measure $\nu$, the convolution $W*\nu$ equals $$W*\nu(x) = \frac{1}{2}(x-1)^2 - (x-1)\mathbb{E}(\nu) + \frac{1}{2}\mathbb{E}(\nu^2) = \frac{x^2}{2} - (\mathbb{E}(\nu) +1) x + \const$$ and hence $\Pi(\nu)$ is the Gaussian law $\mathcal{N}(1+\mathbb{E}(\nu),1)$. Thus, if we consider a trajectory of the approximating flow $\dot{\nu}_t = \frac{1}{t}(\Pi(\nu_t) - \nu_t)$, we have for its mean value $$\frac{\mathrm{d}}{\mathrm{d}t}\mathbb{E}\nu_t = \frac{1}{t}(\mathbb{E} \nu_t + 1 - \mathbb{E}\nu_t) = \frac{1}{t},$$ and so $\mathbb{E}\nu_t \sim \log t$.

For a formal proof, note that (as the interaction potential is a polynomial of degree 2) the evolution of the couple $(X_t,c_t)$, where $c_t = c(\mu_t) = \mathbb{E}\mu_t +1$ is Markovian:
\begin{eqnarray*}
\left\{
\begin{array}{l}
\mathrm{d}X_t = \mathrm{d}B_t - (X_t-c_t) \mathrm{d}t, \\ 
\dot{c}_t = \frac{1}{t}(X_t-c_t+1).
\end{array}
\right.
\end{eqnarray*}
Changing $X_t$ to $Y_t = X_t -c_t$, we obtain 
\begin{eqnarray*}
\left\{
\begin{array}{l}
\mathrm{d}Y_t = \mathrm{d}B_t - \left(Y_t + \frac{1}{t}(Y_t+1)\right) \mathrm{d}t, \\ 
\dot{c}_t = \frac{1}{t}(Y_t+1).
\end{array}
\right.
\end{eqnarray*}
The equation on $Y$ does not contain $c_t$. So, explicit solution of this system and rigorous justification of the desired properties become an exercise.

\bigskip

\textsc{Victor Kleptsyn}: IRMAR (UMR 6625 CNRS), Universit\'e Rennes 1, Campus de Beaulieu, F-35042 Rennes Cedex, France. \\ Email: victor.kleptsyn@univ-rennes1.fr

\textsc{Aline Kurtzmann}:  Institut Elie Cartan, Universit\'e Nancy 1, B.P. 70239, F-54506 Vandoeuvre-l\`es-Nancy Cedex, France. \\ Email: aline.kurtzmann@iecn.u-nancy.fr

\end{document}